\pgfplotsset{compat=newest}
\newcommand{\R}{\mathbb R}
\newcommand{\N}{\mathbb N}
\DeclarePairedDelimiter{\abs}{\lvert}{\rvert}
\DeclarePairedDelimiter{\norm}{\lVert}{\rVert}
\DeclareMathOperator*{\argmin}{arg\,min}
\newcommand{\cone}{\operatorname{cone}}
\newcommand{\diag}{\operatorname{diag}}
\newcommand{\product}{\operatorname{prod}}
\newcommand{\skewsig}{\operatorname{\mathcal{S}_{\Sigma,\lambda}}}
\newcommand{\dist}[1]{\operatorname{d}_{{#1}}}
\newcommand{\distlog}[1]{\operatorname{d}_{#1}^\textup{log}}
\newcommand{\disthyp}[1]{\operatorname{d}_{#1}^\textup{hyp}}
\newcommand{\SL}{\textup{SL}}
\renewcommand{\sl}{\textup{sl}}
\newtheorem{theorem}{Theorem}[section]
\newtheorem{definition}[theorem]{Definition}
\newtheorem{lemma}[theorem]{Lemma}
\newtheorem{remark}[theorem]{Remark}
\newtheorem{conjecture}[theorem]{Conjecture}
\theoremstyle{definition} 
\title{How to project onto $\SL(n)$}
\author[Jaap]{Patrick Jaap}
\address{Patrick Jaap\\
Weierstraß-Institut\\
Mohrenstraße 39\\
10117 Berlin\\
Germany \\
\orcidlink{0000-0001-8567-9988}~\href{https://orcid.org/0000-0001-8567-9988}{0000-0001-8567-9988}}
\email{patrick.jaap@wias-berlin.de}
\thanks{This work was funded by the Deutsche Forschungsgemeinschaft
(DFG, funder DOI: \url{https://dx.doi.org/10.13039/501100001659})
as part of the Schwer\-punktprogramm SPP\,1962: ``Non-smooth and Complementarity-based Distributed Parameter Systems -- Simulation
and Hierarchical Optimization''; Project~16: ``Nonsmooth Multi-Level Optimization Algorithms for Energetic Formulations of Finite-Strain Elastoplasticity''
}
\author[Sander]{Oliver Sander}
\address{Oliver Sander\\
Technische Universität Dresden\\
Institut für Numerische Mathematik\\
Zellescher Weg 12--14\\
01069 Dresden\\
Germany \\
\orcidlink{0000-0003-1093-6374}~\href{https://orcid.org/0000-0003-1093-6374}{0000-0003-1093-6374}}
\email{oliver.sander@tu-dresden.de}
\begin{document}

\begin{abstract}
We consider the closest-point projection with respect to the Frobenius norm
of a general real square matrix to the set $\SL(n)$ of matrices with unit determinant.
As it turns out, it is sufficient to consider diagonal matrices only.
We investigate the structure of the problem both in Euclidean coordinates
and in an $n$-dimensional generalization of the classical hyperbolic coordinates
of the positive quadrant.
Using symmetry arguments we show that the global minimizer is contained
in a particular cone. Based on different views of the problem, we propose
four different iterative algorithms, and we give convergence results for all of them.
Numerical tests show that computing the projection costs essentially as much
as a singular value decomposition.
Finally, we give an explicit formula for the first derivative of the projection.
\end{abstract}

\keywords{special linear group, closest-point projection, Frobenius norm, hyperbolic coordinates, derivative}

\maketitle

\section{Introduction}

Let $n \ge 2$ be a natural number, and define the special linear group
\begin{equation*}
	\SL(n)\colonequals \big\{ P\in\R^{n\times n}\colon~\det(P)=1\big\}.
\end{equation*}
For a given matrix $A\in\R^{n\times n}$ we consider the problem of finding
a $P\in\SL(n)$ that minimizes the squared Frobenius distance%
\footnote{The factor $\frac{1}{2}$ simplifies various expressions appearing
later in the text. In an abuse of notation we will call $\dist{A}$ the
squared Frobenius distance despite this factor.
}
to $A$
\begin{equation}
\label{eq:squared_frobenius_distance}
 \dist{A}(P)
 \colonequals
 \frac{1}{2} \norm{A-P}^2_F
	\colonequals
 \frac{1}{2} \sum_{i,j=1}^n (A_{ij}-P_{ij})^2.
\end{equation}
Since $\dist{A}$ is continuous, coercive, and bounded from below,
it does have a global minimizer on the closed set $\SL(n)$.

The only previous work on projections onto $\SL(n)$ that we are aware of
is an article by \textcite{baaijens_draisma:2015}.  Using techniques from
algebraic geometry, they prove that for any $A \in \R^{n \times n}$ with pairwise
distinct singular values,
the distance $\dist{A}$ restricted to $\SL(n)$ has $n2^{n-1}$ stationary points
(some of them possibly complex).  \citeauthor{baaijens_draisma:2015} also give an algorithmic
procedure to obtain such a stationary point, and conjecture a way to find the
global minimizer of the distance.
However, this procedure is fairly complicated, and it involves finding roots
of a high-degree polynomial, which is difficult to do
in practice.

We look at the same projection problem from the point of view of numerical analysis.
Finding one of the local minimizers among the $n2^{n-1}$ stationary points of $\dist{A}$
approximately is reasonably straightforward, using algorithms from the mature field of
equality-constrained optimization~\cite{nocedal_wright:2006}.
However, the problem has a considerable amount of structure, which
can be used to construct algorithms that are more efficient and robust
than off-the-shelf ones.  It is not a geometric program in the sense of~\cite[Chapter~4.5]{boyd_vandenberghe:2004},
because the objective function is not a posynomial.
The problem contains various symmetries, which provide some hints
about the location of the global minimizer, but reliably finding such a global minimizer seems
out of reach (except for the case $n=2$).

We begin by revisiting the fact that it is sufficient to consider the space
of diagonal matrices only.
As already noted by \textcite[Remark~4.2]{baaijens_draisma:2015}, the problem
is equivalent to minimizing the
Euclidean distance in $\R^n$ of a point $a \in \R^n$ to the set%
\footnote{
Not to be confused with the Lie algebra $\mathfrak{sl}(n)$ of $\SL(n)$, which does not
appear in this manuscript.
}
\begin{equation*}
 \sl(n)
 \colonequals
 \Big\{ x \in \R^n \; : \; \product(x) = 1 \Big\},
 \qquad
 \product(x) \colonequals \prod_{i=1}^n x_i.
\end{equation*}
This is a considerable simplification.  In particular, the connected
admissible set $\SL(n)$ is replaced by one with many connected components,
which are all identical modulo simple symmetry transformations.
Considering only one of them, the set $\sl(n) \cap \R^n_{\ge 0}$ of admissible points
with nonnegative coefficients, is therefore sufficient.

The set $\sl(n) \cap \R^n_{\ge 0}$ turns out to be the boundary of a strictly convex
subset of $\R^n$. The problem of projecting a point $a \in \R^n$ onto $\sl(n) \cap \R^n_{\ge 0}$
therefore changes its character
depending on whether $a$ is inside or outside of this set, i.e., whether $\product(a) \ge 1$
or not.  Using simple symmetry considerations we then show that the
global minimizer is in the cone of points that have the same coordinate ordering as $a$.
We prove that the global minimizer is the only minimizer in that cone
if $n=2$, and show that this is not true in higher dimensions.
The arguments of the counterexample lead to a first algorithm that finds
the projection by a bisection search for the Lagrange multiplier
of the constraint to $\sl(n)$.

In an attempt to arrive at simpler iterative solvers, we then move to
logarithmic coordinates to replace the non-Euclidean admissible set $\sl(n) \cap \R_{\ge 0}^n$
by a vector-space (at the price of complicating the objective functional).
For such a formulation, a constrained Newton method is a natural choice.
A further coordinate transformation allows to eliminate the constraint completely.
The resulting coordinates turn out to be a generalization of the hyperbolic coordinates
of the positive quadrant of $\R^2$ formed by the hyperbolic angle and the geometric mean.

In total, we present four different
numerical algorithms for the computation of a minimizer of the squared Frobenius distance.
These algorithms are all
well-defined even for matrices $A$ with zero or duplicate singular values.
For two of them we can show that the sequences of iterates they produce
stay in the cone that contains the global
minimizer. We perform a sequence of tests of these algorithms,
using sets of random dense matrices of different sizes.
Unfortunately, each algorithm requires a singular value decomposition to go
from $\R^{n \times n}$ to $\R^n$, and this will turn out to dominate the run time.

One has to note that our choice of algorithms is not the end of the story.
Projecting onto $\SL(n)$ is also possible with algorithms from the field
of optimization on manifolds~\cite{absil_mahony_sepulchre:2009},
difference-of-convex programming~\cite{deoliveira:2020},
or using nonconforming algorithms similar to \cite{gao_vary_ablin_absil:2022}.
The readership is invited to investigate these approaches in detail.

Our interest in projecting onto $\SL(n)$ originates from theories of finite-strain
elastoplasticity~\cite{Mielke2002}, where the plastic strain, a field of
$3 \times 3$ matrices, is frequently constrained to $\SL(3)$ to model
plastic incompressibility.  One way to approximate such fields are projection-based
finite elements \cite{grohs_hardering_sander_sprecher:2019}, which involve
a projection onto $\SL(n)$.
As the projection needs to happen once for each quadrature point
of the finite element grid, the efficiency of the projection algorithm
is crucial. In addition, projection-based finite element methods require the derivative
of the projection map $A \mapsto P$, which can be computed easily using
the implicit function theorem.
For the convenience of the reader we spell out the formula for the derivative in the last chapter.
As a by-product we learn under what circumstances this derivative is well defined.

\begin{remark}[Radial scaling]
\label{rem:scaling}
For invertible matrices, it is tempting to project onto $\SL(n)$ by simply dividing $A$
by the $n$-th root of its determinant
\begin{equation*}
 A \mapsto P_\textup{scale}(A) \colonequals A (\det A)^{-\frac{1}{n}} \in \SL(n).
\end{equation*}
However, the element in $\SL(n)$ obtained in this way can be far from optimal.
To see this, consider the case $n=2$ and define the sequence $(A_k)_{k\in\N}$ in $\R^{2\times 2}$ by
\begin{equation*}
 A_k
 \colonequals
 \begin{pmatrix} k & 0 \\ 0 & \frac2k \end{pmatrix}.
\end{equation*}
Then we have $\det A_k = 2$, and hence
$\norm{A_k - P_\textup{scale}(A_k)}_F^2 = (1-\frac{1}{\sqrt2})^2 \norm{A_k}^2_F \stackrel{k\to\infty}{\longrightarrow} \infty$.
The optimal projection, on the other hand, must be at least as good as
\begin{equation*}
 P_k
 \colonequals
 \begin{pmatrix} k & 0 \\ 0 & \frac1k \end{pmatrix}
 \in \SL(n),
\end{equation*}
and for this we have
\begin{equation*}
 \norm{A_k - P_k}^2_F
 =
 \frac{1}{k^2} \stackrel{k\to\infty}{\longrightarrow} 0.
\end{equation*}
\end{remark}

\tableofcontents

\section{Transformation to diagonal matrices}
\label{sec:diagonalization}

The problem is simplified considerably by realizing that it is sufficient
to consider the space of diagonal matrices only.
To this end we rewrite the minimization problem in $\SL(n)$ as a
minimization problem in $\R^{n\times n}$ with the constraint $\det P =1$.
For a given $A \in \R^{n \times n}$ we define the Lagrange functional
\begin{equation*}
	\mathcal{L} :\R^{n\times n}\times \R \to \R,
	\qquad
	\mathcal{L}(P,\lambda) \colonequals \frac12 \norm{P-A}_F^2 + \lambda(\det P -1).
\end{equation*}
All local minimizers in $\SL(n)$ of the squared distance functional $\dist{A}$
defined in~\eqref{eq:squared_frobenius_distance}
are stationary points of $\mathcal{L}$ when paired with the corresponding Lagrange
multiplier $\lambda$.  Suppose that $P \in \R^{n \times n}$ is such a local
minimizer.  Then
\begin{equation}\label{eq:lagrange}
	\nabla \mathcal{L}(P,\lambda) = \begin{pmatrix}
		P-A + \lambda \nabla\det P \\ \det P - 1
	\end{pmatrix} = 0.
\end{equation}
From the second line it follows that $P$ is invertible and
Jacobi's formula~\cite{magnus_neudecker:1988} then gives
\begin{equation*}
	\nabla\det P = (\det P) P^{-T} = P^{-T}.
\end{equation*}
Using this in the first line of~\eqref{eq:lagrange} implies that
\begin{equation}
\label{eq:relation_A_to_P}
	A = P + \lambda P^{-T}.
\end{equation}

Let now $U,V \in \R^{n \times n}$ be two orthogonal matrices that diagonalize
$P$ in the sense that $P = U \Sigma_P V^T$ for some diagonal matrix $\Sigma_P$.
Such a pair of matrices always exists in form of the singular value decomposition.
Using the diagonalization in~\eqref{eq:relation_A_to_P} we then get that
\begin{equation} \label{eq:A-and-P-diagonalized}
 A
 =
 U\big(  \Sigma_P +\lambda \Sigma_P^{-1} \big)V^T
 \equalscolon
 U \Sigma_A V^T,
\end{equation}
which means that $U$ and $V$ also diagonalize $A$.
Conversely, if two orthogonal matrices $U$ and $V$ diagonalize $A$ such that
$A = U \Sigma_A V^T$ for a diagonal matrix $\Sigma_A$, and if the diagonal
matrix $\Sigma_P$ solves $\Sigma_P + \lambda \Sigma_P^{-1} = \Sigma_A$,
then $P \colonequals U \Sigma_P V^T$ solves~\eqref{eq:relation_A_to_P}.
Together with the equality constraint we obtain the Lagrange conditions
for the diagonal matrices
\begin{align}
\label{eq:lagrange_diagonal}
 \begin{pmatrix}
  \Sigma_P -\Sigma_A + \lambda \Sigma_P^{-1} \\
  \det \Sigma_P - 1
 \end{pmatrix}
 =
 0.
\end{align}

Comparing~\eqref{eq:lagrange_diagonal} with~\eqref{eq:lagrange} we see that the sets of solutions
are isomorphic.  What is more, since the Frobenius norm
is invariant under orthogonal transformations from the left and from the right
we have
\begin{equation*}
 \norm{P-A}_F
 =
 \norm{\Sigma_P - \Sigma_A}_F.
\end{equation*}
Therefore, if $P$ is a (global) minimizer of $\dist{A}$ in $\SL(n)$,
then $\Sigma_P$ is a (global) minimizer of the distance to $\Sigma_A$
in the space of diagonal matrices with unit determinant, and vice versa.
A related result is stated in~\cite[Remark~4.2]{baaijens_draisma:2015}.
To compute $P$ from $A$ it is therefore sufficient to compute $\Sigma_A$
and then to look for a minimizer of $\dist{\Sigma_A}$
in the set of diagonal matrices with unit determinants.

\section{The structure of the admissible set, and the global minimizer}
\label{sec:structure_of_admissible_set}

We have just shown that it is sufficient to consider the projection problem
in the space of diagonal $n \times n$ matrices. To simplify the notation
we now identify this space with $\R^n$ by the canonical operator $\diag :\R^n \to \R^{n \times n}$,
whose inverse $\diag^{-1}$ is well-defined on the set of diagonal matrices.
The operator $\diag$ is an isometry when $\R^n$ is equipped with the Euclidean norm $\norm{\cdot}_2$.
We will denote points in $\R^n$ by lower-case latin letters,
and write $p^{-1}$ for the component-wise inverse of $p \in \R^n$ provided
that it is well defined.

\subsection{The minimization problem in $\R^n$}

Denote the subset of $\R^n$ that corresponds to the diagonal matrices
of unit determinant by
\begin{equation}\label{eq:G}
 \sl(n)\colonequals \Big\{  p\in\R^n:\  \product (p) = 1 \Big\},
 \qquad \text{with} \quad
 \product (p) \colonequals \prod_{i=1}^{n} p_i.
\end{equation}
From now on we consider the problem of finding a minimizer of the functional
\begin{equation*}
 \dist{a} : \R^n \to \R,
 \qquad
 \dist{a}(p) \coloneqq \frac{1}{2} \norm{p-a}_2^2,
 \qquad
 p \in \sl(n).
\end{equation*}
The stationarity conditions for this are
\begin{align}
\label{eq:lagrange_vector_formulation}
 \begin{pmatrix}
  p - a + \lambda p^{-1} \\
  \product(p) - 1
 \end{pmatrix}
 =
 0,
\end{align}
which are simply \eqref{eq:lagrange_diagonal} with a different notation.

While being posed on a much smaller space, Problem~\eqref{eq:lagrange_vector_formulation}
yields a set of stationary points that is isomorphic to the set of stationary points
of the original problem. However, the structure of the admissible set has changed.
Unlike $\SL(n)$, which is connected~\cite[Exercise~M.8.\,(a)]{artin:2011},
the set $\sl(n)$ consists of $2^{n-1}$ separate components, each characterized by
a fixed subset of even size of variables with negative sign.
Each component is a closed set in $\R^n$, and therefore contains a
minimizer of the distance functional.
This is one source of nonuniqueness of projections onto $\SL(n)$ and $\sl(n)$.

Geometrically, $\sl(n)$ is a differentiable submanifold of $\R^n$.
With componentwise multiplication, it is a Lie group
with identity element $\mathbbm{1} \coloneqq (1,\dots,1)^T$.
Since $\sl(n)$ has codimension~1 in $\R^n$, and since
$\nabla \product(p) = p^{-1} \colonequals (p_1^{-1},\dots,p_n^{-1})^T$
is a normal vector
to $\sl(n)$ at $p \in \sl(n)$, the tangent space $T_p \sl(n)$ at a point $p$
consists of all vectors that are orthogonal to $p^{-1}$, i.e.,
\begin{equation*}
 T_p\sl(n)
 =
 \big\{ v \in \R^n \; : \; \langle p^{-1}, v \rangle = 0 \big\}.
\end{equation*}
In particular, tangent vectors $v$ at $\mathbbm{1}$ are characterized by
$\sum_{i=1}^n v_i = 0$.

From its Lie group structure, the manifold $\sl(n)$ inherits a natural
exponential map
\begin{equation}
\label{eq:exponential_map}
 \exp : T_\mathbbm{1} \sl(n) \to \sl(n),
\end{equation}
defined by application of the usual scalar exponential function componentwise.
It is a diffeomorphism between
$T_\mathbbm{1} \sl(n)$ and the connected component of $\sl(n)$
that contains the identity~$\mathbbm{1}$.
Note that it is not the Riemannian exponential map induced by the surrounding Euclidean space.

\subsection{The positive orthant}

\begin{figure}
\label{fig:sl2_sl3}
 \begin{center}
   \begin{tikzpicture}

 \begin{axis}[
         axis line style={<->,color=black, thick}, 
         height=0.27\textheight,
         xlabel={$x_1$},
         ylabel={$x_2$},
         xmin=-5,xmax=5,
         ymin=-5,ymax=5,
         xtick={-4,-2,...,4},
         ytick={-4,-2,...,4},
         ]

      \draw[dashed] (-5,0) -- (5,0);
      \draw[dashed] (0,-5) -- (0,5);

      \fill[gray!30] plot[domain=0.2:5,smooth,samples=100] (\x,{1/\x}) -- (5,5)  -- (0.2,5) -- cycle;

      \draw[red,very thick, smooth,samples=100,domain=-5:-0.2] plot(\x,{1/\x});

      \draw[blue,very thick, smooth,samples=100,domain=0.2:5] plot(\x,{1/\x});
 \end{axis}

\end{tikzpicture}
  \qquad
  \includegraphics[height=0.25\textheight]{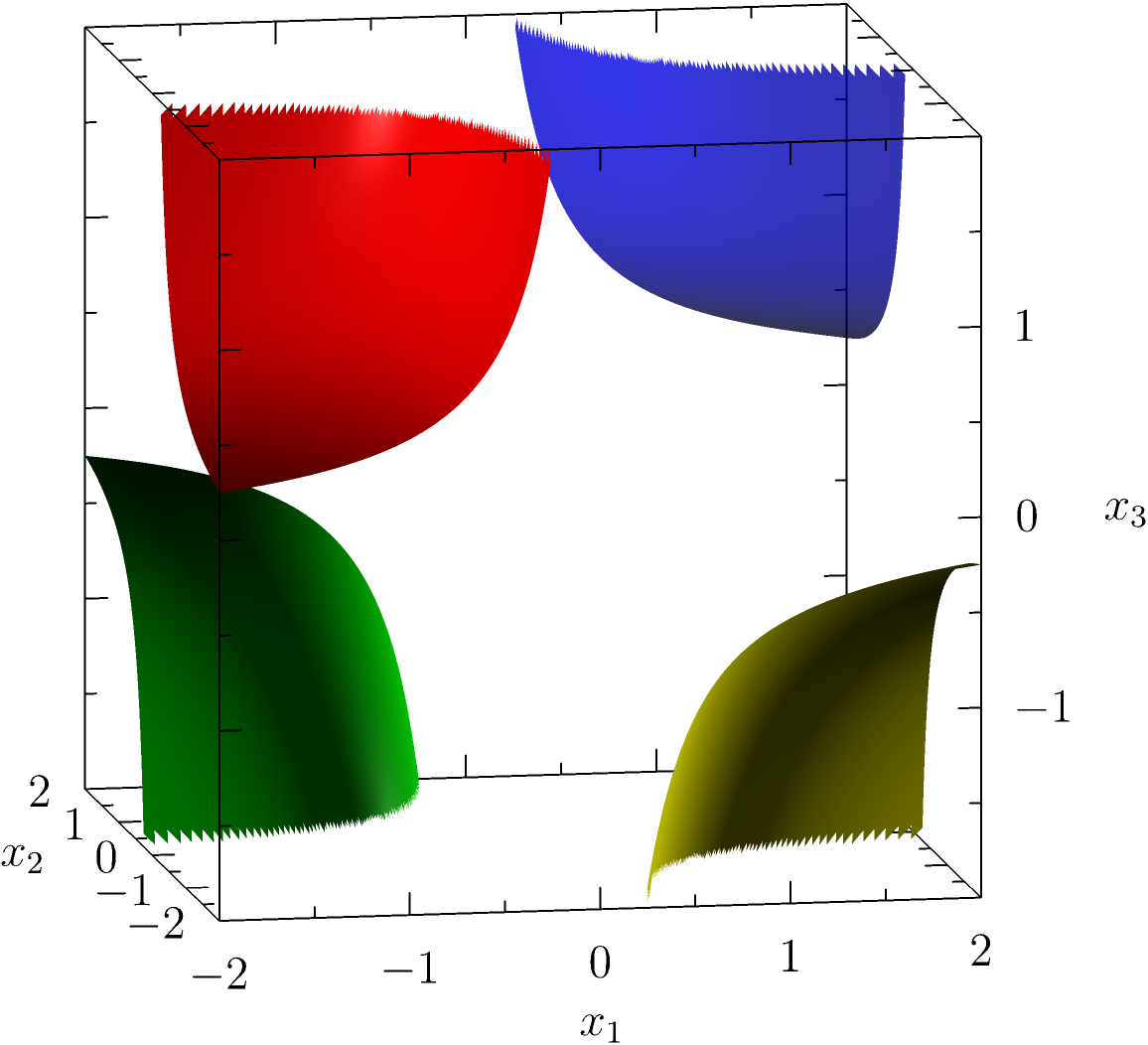}
 \end{center}

 \caption{The sets $\sl(2)$ (left) and $\sl(3)$ (right). The colored lines (left)
  and surfaces (right) show the actual sets $\sl(n)$. The grey area on the left
  is the set $\sl^+(n) \cap \R^n_{\ge 0}$ whose strict convexity is proved
  in Lemma~\ref{lemma:G+-is-convex}. In the right picture, it is the set
  beyond the blue surface.}
 \label{fig:sl2_and_sl3}
\end{figure}

It is straightforward to single out the connected component of $\sl(n)$ that contains
the global minimizer.
Define the positive orthant $\R^n_{\ge 0} \colonequals \{ x \in \R^n \; : \; x_i \ge 0 \; \forall i=1,\dots,n\}$.
Given any matrix $A \in \R^{n \times n}$ with diagonalization $A = U \Sigma_A V^T$,
by adjusting the signs of matrix rows of $U$ and $V$
we can obtain a diagonalization $a = \Sigma_A$ of $A$ that has only nonnegative
diagonal entries---the singular value decomposition.
Since the Euclidean distance $\norm{\cdot}_2$ is invariant under sign flips
of individual components of its argument, we can deduce that
the global distance minimizer must be contained
in the connected component $\sl(n) \cap \R^n_{\ge 0}$ of $\sl(n)$ that has only positive entries.

\begin{lemma}
 Let $a \in \R^n_{\ge 0}$. Then there is a global minimizer $p \in \sl(n)$
 with $p_i > 0$ for all $i\in\{1,\hdots,n\}$.
\end{lemma}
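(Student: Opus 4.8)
The plan is to reduce the general case to the nonnegative case already handled, and then to rule out the boundary of the orthant by a direct perturbation argument. First I would invoke the existence of \emph{some} global minimizer $p\in\sl(n)$: the functional $\dist{a}$ is continuous and coercive, and $\sl(n)$ is closed, so a global minimizer exists. Next, since $a\in\R^n_{\ge 0}$ and the Euclidean norm is invariant under flipping the signs of individual coordinates, replacing any negative coordinate $p_i<0$ by $|p_i|$ does not change $\product(p)$ up to sign; because the number of negative coordinates of a point of $\sl(n)$ is even, flipping all of them at once keeps us in $\sl(n)$ while not increasing the distance to $a$ (here we use $a_i\ge 0$, so $|a_i-|p_i||\le|a_i-p_i|$ coordinatewise). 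Hence without loss of generality the global minimizer $p$ lies in $\sl(n)\cap\R^n_{\ge 0}$, i.e. $p_i\ge 0$ for all $i$.

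It remains to upgrade ``$p_i\ge 0$'' to ``$p_i>0$''. Suppose for contradiction that $p_j=0$ for some index $j$. Since $\product(p)=\prod_i p_i=1$, no coordinate of a point of $\sl(n)$ can vanish, so in fact $p\notin\sl(n)$ — this is an immediate contradiction. Thus strict positivity is automatic once we know $p\in\sl(n)\cap\R^n_{\ge 0}$; the product constraint forces every coordinate to be nonzero, and nonnegative-and-nonzero means positive.

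The only genuinely substantive point is therefore the sign-flip reduction, and even that is routine given the stated invariance of $\norm{\cdot}_2$ under coordinate sign changes together with the observation (made in the excerpt just before the lemma) that each connected component of $\sl(n)$ is characterized by a \emph{even}-sized set of negative coordinates. I would phrase the argument as: if $p$ is a global minimizer with negative set $S\subseteq\{1,\dots,n\}$, $|S|$ even, let $\tilde p_i\colonequals |p_i|$; then $\product(\tilde p)=\product(p)=1$ so $\tilde p\in\sl(n)$, and $\norm{\tilde p-a}_2\le\norm{p-a}_2$ because $a_i\ge 0$ implies $|\,|p_i|-a_i\,|\le|p_i-a_i|$ for each $i$, with equality unless $p_i<0$. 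Hence $\tilde p$ is also a global minimizer, it lies in $\R^n_{\ge 0}$, and (as above) the constraint $\product(\tilde p)=1$ forces $\tilde p_i>0$ for all $i$. I do not anticipate any real obstacle here; the main thing to get right is the parity bookkeeping ensuring $\tilde p$ stays on $\sl(n)$ rather than on a different component, and the elementary coordinatewise distance inequality.
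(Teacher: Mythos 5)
Your proof is correct and uses essentially the same sign-flip idea as the paper: start from an arbitrary global minimizer, use $a\in\R^n_{\ge 0}$ to flip negative coordinates to positive without increasing the distance, and finish by noting that $\product(p)=1$ rules out any zero coordinate, so nonnegative implies strictly positive. The small organizational differences actually work in your favor: you flip all negative coordinates at once and only need the non-strict coordinatewise bound $\lvert\,\lvert p_i\rvert - a_i\,\rvert \le \lvert p_i - a_i\rvert$, whereas the paper argues by contradiction, flipping two signs at a time and asserting a \emph{strict} decrease $(-p_i-a_i)^2 < (p_i-a_i)^2$, which actually degenerates to equality when $a_i=0$; your non-strict variant sidesteps that corner case cleanly and also makes explicit the final upgrade from $p_i\ge 0$ to $p_i>0$ that the paper leaves implicit.
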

\begin{proof}
 Suppose that for each global minimizer $p$ there is an index $i$
 (depending on~$p$) such that $p_i < 0$.  Let $p$ be such a global minimizer.
 Since $\product(p) = 1 > 0$, there must be at least one other index $j \neq i$
 such that $p_j < 0$ as well.

 Then $(-p_i - a_i)^2 < (p_i - a_i)^2$ and $(-p_j - a_j)^2 < (p_j - a_j)^2$.
 Consequently, if $p^\pm \in \R^n$ is the vector $p$ with the signs of the
 $i$-th and $j$-th components reversed, then $p^\pm \in \sl(n)$ and
 $\norm{p^\pm -a}^2_2 < \norm{p-a}^2_2$. This contradicts the assumption that $p$
 was a global minimizer.
\end{proof}

Figure~\ref{fig:sl2_and_sl3} shows the set $\sl(n)$ for $n=2$ and $n=3$.
One sees that the connected components are not convex.  However, for a point $a$
with $\product a < 1$, the closest point in $\sl(n) \cap \R^n_{\ge 0}$ is also
the closest point in the set $\sl^+(n) \cap \R^n_{\ge 0}$ (with
$\sl^+(n) \colonequals \{x \in \R^n \; : \; \product (x) \ge 1\}$),
and that set is strictly convex.

\begin{lemma}\label{lemma:G+-is-convex}
	For any $n\in\N$ the set $\sl^+(n) \cap \R^n_{\ge 0}$ is strictly convex.
\end{lemma}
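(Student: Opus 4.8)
The plan is to show that the region $\sl^+(n)\cap\R^n_{\ge 0}$ is a sublevel-type set of a convex function, together with a convex constraint. Concretely, on the open orthant $\R^n_{>0}$ the condition $\product(x)\ge 1$ is equivalent to $\log\product(x)\ge 0$, i.e.\ $-\sum_{i=1}^n\log x_i\le 0$. The function $g(x)\colonequals -\sum_{i=1}^n\log x_i$ is strictly convex on $\R^n_{>0}$, since each $x_i\mapsto-\log x_i$ is strictly convex. Hence $\{x\in\R^n_{>0}:g(x)\le 0\}$ is a strictly convex set, and intersecting with the (convex) closed orthant boundary pieces does not destroy convexity; one only has to be a little careful about the points of $\sl^+(n)\cap\R^n_{\ge 0}$ lying on the boundary of the orthant (where some $x_i=0$), but these satisfy $\product(x)=0<1$, so in fact $\sl^+(n)\cap\R^n_{\ge 0}=\sl^+(n)\cap\R^n_{>0}$ and the boundary of the orthant is irrelevant. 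So really the claim reduces to: $\{x\in\R^n_{>0}:-\sum_i\log x_i\le 0\}$ is strictly convex.

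The steps I would carry out are: (i) observe $\product(x)\ge1$ forces all coordinates of a point in $\R^n_{\ge0}$ to be strictly positive, so we may work on $\R^n_{>0}$; (ii) rewrite the defining inequality as $g(x)\le 0$ with $g(x)=-\sum_i\log x_i$, and note $\R^n_{>0}$ is open and convex; (iii) verify strict convexity of $g$, e.g.\ by computing the Hessian $\nabla^2 g(x)=\diag(x_1^{-2},\dots,x_n^{-2})$, which is positive definite everywhere on $\R^n_{>0}$; (iv) conclude that the sublevel set $\{g\le 0\}$ is convex, and upgrade to strict convexity: given two distinct points $x,y$ in the set and $t\in(0,1)$, strict convexity of $g$ gives $g(tx+(1-t)y)<t\,g(x)+(1-t)\,g(y)\le 0$, so the midpoint-type combination lies in the \emph{interior} $\{g<0\}$, which is exactly the statement that the set is strictly convex.

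One subtlety worth spelling out is what ``strictly convex set'' should mean here and why the argument in (iv) delivers it: a closed convex set $C$ is strictly convex if for all distinct $x,y\in C$ and $t\in(0,1)$ the point $tx+(1-t)y$ is in the relative interior of $C$ (equivalently, the boundary contains no line segment). The computation in (iv) shows exactly this, because $\{x\in\R^n_{>0}:g(x)<0\}$ is open in $\R^n$ and is contained in $\sl^+(n)\cap\R^n_{\ge0}$, so it is contained in the interior of that set. Thus no segment can lie on $\sl(n)\cap\R^n_{\ge0}$, which is the boundary. The closedness of $\sl^+(n)\cap\R^n_{\ge 0}$ in $\R^n$ is immediate from continuity of $\product$.

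The only mild obstacle is bookkeeping around the boundary of the orthant and pinning down the precise definition of strict convexity one wants; the analytic heart—strict convexity of $-\log$—is completely standard. An alternative to the Hessian computation, if one prefers to avoid smoothness bookkeeping, is to invoke directly that $x\mapsto-\log x$ is strictly convex on $(0,\infty)$ and that a sum of strictly convex functions is strictly convex, then use the sublevel-set argument verbatim.
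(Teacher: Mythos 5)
Your proof is correct and is essentially the paper's own argument: the paper shows $\log\product(x)=\sum_i\log x_i$ is strictly concave via its diagonal Hessian and then uses $\log\product(tx+(1-t)y)>t\log\product(x)+(1-t)\log\product(y)\ge0$ to place the convex combination in the interior of $\sl^+(n)\cap\R^n_{\ge 0}$, which is the same computation as your strict convexity of $g=-\sum_i\log x_i$ and the resulting strict sublevel inequality $g(tx+(1-t)y)<0$. The extra bookkeeping you add (that $\product(x)\ge 1$ forces $x\in\R^n_{>0}$, and the precise meaning of a strictly convex set) is sound and makes explicit what the paper leaves implicit, but it does not change the route.
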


\begin{proof}
 The logarithm of the product function
 \begin{equation*}
  \log(\product(x)) = \sum_{i=1}^n \log x_i
 \end{equation*}
 is strictly concave, because the Hesse matrix of $\log(\product(x))$
 is $\operatorname{diag}\big( -\frac{1}{x_1^2}, \dots , -\frac{1}{x_n^2} \big)$,
 and that is negative definite wherever defined.

 Let now $x\neq y\in \sl^+(n) \cap \R^n_{\ge 0}$ and $t \in (0,1)$.
 To show that $t x + (1-t) y$ is  in the interior of $\sl^+(n) \cap \R^n_{\ge 0}$
 we have to prove that $\product(tx + (1-t) y) > 1$. But this follows because
 \begin{align*}
  \log \big( \product(tx + (1-t) y)\big)
  & >
  t \underbrace{\log \product(x)}_{\ge 0} + (1-t) \underbrace{\log \product(y)}_{\ge 0}
  \ge
  0.
 \end{align*}
 Strict monotonicity of the logarithm then implies that $\product\big(tx + (1-t) y\big) > 1$.
\end{proof}

While $\sl^+(n) \cap \R^n_{\ge 0}$ is convex, its complement
$\R^n_{\ge 0} \setminus \sl^+(n)$ is not.
As a consequence, the nature of the minimization problem changes considerably depending on
whether $\product(a)$ is larger or smaller than $1$.  Indeed, if $\product(a) < 1$
then minimizing $\dist{a}$ on $\sl(n) \cap \R^n_{\ge 0}$ is a strictly convex
optimization problem, which has a unique minimizer. If $\product(a) > 1$, however,
the admissible set $\sl(n) \cap \R^n_{\ge 0}$ is not the boundary of a convex set, which makes the
optimization problem harder, and is a second source of non-unique minimizers.
As an example, consider the case $n=2$,
depicted on the left side of Figure~\ref{fig:sl2_and_sl3}.  If $a$ is placed on the positive
diagonal and such that $a_1 = a_2 > 2$, then the problem has two global
minimizers.  This is not perfectly obvious from the picture alone, but will be shown
in Chapter~\ref{sec:uniqueness_in_the_order_cone} below.

As it turns out, the two cases can also be distinguished from the sign
of the Lagrange multiplier.

\begin{lemma}
\label{lem:sign_of_lambda}
 The Lagrange multiplier $\lambda$ is positive in a stationary point if and only if $\product(a) > 1$.
\end{lemma}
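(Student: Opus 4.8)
The plan is to work from the first-order stationarity condition~\eqref{eq:lagrange_vector_formulation}, namely $p - a + \lambda p^{-1} = 0$ together with $\product(p) = 1$, and extract a scalar identity that ties the sign of $\lambda$ to whether $\product(a)$ exceeds~$1$. The natural quantity to look at is the inner product of the gradient equation with a well-chosen vector. Taking the Euclidean inner product of $p - a + \lambda p^{-1} = 0$ with $p^{-1}$ gives
\begin{equation*}
 \langle p, p^{-1}\rangle - \langle a, p^{-1}\rangle + \lambda \langle p^{-1}, p^{-1}\rangle = 0,
\end{equation*}
and since $\langle p, p^{-1}\rangle = \sum_i p_i \cdot p_i^{-1} = n$, this rearranges to
\begin{equation*}
 \lambda \norm{p^{-1}}_2^2 = \langle a, p^{-1}\rangle - n.
\end{equation*}
Because $\norm{p^{-1}}_2^2 > 0$, the sign of $\lambda$ equals the sign of $\langle a, p^{-1}\rangle - n = \sum_i a_i / p_i - n$. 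So the whole statement reduces to showing $\sum_i a_i/p_i > n \iff \product(a) > 1$, at least at the stationary point of interest.

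The second step is to turn that component-wise sum into something involving $\product(a)$, and this is where I expect the real work to be. The clean route is to use the arithmetic–geometric mean inequality on the positive numbers $a_i/p_i$ — this requires $a_i, p_i > 0$, which we may assume by the earlier lemma and by restricting attention to the relevant connected component $\sl(n)\cap\R^n_{\ge 0}$; one should check that $a_i > 0$ strictly, or handle $a_i = 0$ separately (if some $a_i = 0$ then $\product(a) = 0 < 1$ and one argues directly that $\lambda < 0$, e.g. from $p \ne a$ and the structure of~\eqref{eq:relation_A_to_P}). Assuming strict positivity, AM–GM gives
\begin{equation*}
 \frac{1}{n}\sum_{i=1}^n \frac{a_i}{p_i} \;\ge\; \Big(\prod_{i=1}^n \frac{a_i}{p_i}\Big)^{1/n} = \Big(\frac{\product(a)}{\product(p)}\Big)^{1/n} = \product(a)^{1/n},
\end{equation*}
using $\product(p) = 1$. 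Hence $\product(a) > 1$ already forces $\sum_i a_i/p_i > n$, giving $\lambda > 0$; that is one direction. The converse direction — $\lambda > 0 \implies \product(a) > 1$ — is the delicate half, because AM–GM only bounds the sum from below, so it does not immediately rule out $\sum a_i/p_i > n$ while $\product(a) \le 1$.

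For the converse I would argue by contraposition and exploit the convexity structure already established. If $\product(a) \le 1$, then $a \notin \operatorname{int}(\sl^+(n)\cap\R^n_{\ge 0})$, so $a$ lies in the closed convex region $\R^n_{\ge 0}\setminus\operatorname{int}\sl^+(n)$, and the projection onto the convex set $\sl^+(n)\cap\R^n_{\ge 0}$ lands on its boundary $\sl(n)\cap\R^n_{\ge 0}$; the normal cone / variational inequality characterization of that projection says exactly that $p - a$ is a nonnegative multiple of the \emph{outward} normal $-p^{-1}$ to the convex set, i.e. $p - a = -\lambda p^{-1}$ with $\lambda \ge 0$ — wait, one must be careful with the sign convention of the normal. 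The cleaner statement: the outward normal to $\sl^+(n)$ at $p$ points in the $-\nabla\product(p) = -p^{-1}$ direction, and for a projection onto a convex set $a - p$ points along the outward normal, forcing $a - p = -\mu p^{-1}$ with $\mu \ge 0$, hence $\lambda = \mu \ge 0$... which is the wrong inequality. So instead I would use the AM–GM bound together with its \emph{equality case}: equality in AM–GM holds iff all $a_i/p_i$ are equal, say to $c$, whence $p = c^{-1}a$ and $1 = \product(p) = c^{-n}\product(a)$, so $c = \product(a)^{1/n}$ and the stationary point is precisely the radial scaling; at that point $\lambda\norm{p^{-1}}_2^2 = n(c-1) = n(\product(a)^{1/n}-1)$, which has the same sign as $\product(a) - 1$. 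For the \emph{non-radial} stationary points the argument must be different, and the honest way is: when $\product(a) < 1$, the problem is strictly convex on $\sl(n)\cap\R^n_{\ge 0}$ with a unique minimizer, and a direct perturbation argument (moving $p$ slightly toward $a$ along the manifold decreases the distance while the multiplier that certifies optimality of a convex-problem minimizer is $\le 0$ by the KKT sign conditions for an inequality reformulation $\product(p)\ge 1$) pins down $\lambda \le 0$; the $\product(a) = 1$ case is $p = a$, $\lambda = 0$. The main obstacle, then, is handling the $\product(a) > 1$ side carefully enough to conclude $\lambda > 0$ at \emph{every} stationary point in the positive orthant and not just at the minimizer — here I would again return to the identity $\lambda\norm{p^{-1}}_2^2 = \sum_i a_i/p_i - n$ and show the right-hand side cannot be $\le 0$: if it were, then $\sum_i a_i/p_i \le n$ while $\product(a_i/p_i) = \product(a) > 1$, contradicting AM–GM as displayed above. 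That contradiction closes the $\product(a) > 1$ case completely for all stationary points, and combined with the convex-case argument for $\product(a)\le 1$ it finishes the lemma.
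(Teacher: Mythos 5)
Your scalar identity $\lambda\,\norm{p^{-1}}_2^2 = \sum_i a_i/p_i - n$ is correct and, combined with AM--GM, cleanly gives one implication: $\product(a) > 1 \Rightarrow \lambda > 0$, and this does hold at every stationary point. The problem is the converse, $\lambda > 0 \Rightarrow \product(a) > 1$ (equivalently, $\product(a) \le 1 \Rightarrow \lambda \le 0$), which you correctly recognize AM--GM cannot deliver. The fallback arguments you sketch do not close the gap: the normal-cone/KKT argument (even after fixing the sign slip you flag --- from $a - p = \lambda p^{-1}$ and $a - p = -\mu p^{-1}$ with $\mu \ge 0$ one gets $\lambda = -\mu \le 0$, not $\lambda = \mu$) only certifies the sign of the multiplier at the \emph{minimizer} of the reformulated inequality-constrained problem, not at an arbitrary stationary point of the equality-constrained one, which is what the lemma asserts. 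Ruling out a stationary point with $\lambda > 0$ when $\product(a) < 1$ is exactly the content of the converse direction; the convexity argument presupposes what needs to be shown. (Also, the set $\R^n_{\ge 0} \setminus \operatorname{int}\sl^+(n)$ is not convex, as the paper notes --- that particular phrasing would not survive.)

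The paper's proof sidesteps all of this by staying componentwise: from $a_i = p_i + \lambda p_i^{-1}$ with $p_i > 0$, the sign of $\lambda$ directly decides whether $a_i > p_i$ or $0 \le a_i \le p_i$ for \emph{every} $i$, and comparing with $\product(p) = 1$ immediately gives both implications for any stationary point. By contracting the $n$ equations into a single scalar identity (the inner product with $p^{-1}$), you discard precisely the per-component information that makes the reverse direction trivial. If you instead write $a_i / p_i = 1 + \lambda p_i^{-2}$ and note that $\lambda > 0$ makes every factor $> 1$, you recover the paper's argument with no AM--GM and no convexity needed.
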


\begin{proof}
   Let $a \in \R^n_{\ge 0}$, and let $p \in \sl(n) \cap \R^n_{\ge 0}$ be a stationary point of $\dist{a}$.
  Then by~\eqref{eq:lagrange_vector_formulation}
\begin{equation*}
  a_i = \big( p + \lambda \nabla \product(p) \big)_i = p_i + \lambda p_i^{-1}
  \qquad
  i=1,\dots,n.
\end{equation*}
  If $\lambda \le 0$ all components of $a$ fulfill $0 \le a_i \le p_i$, and therefore, $\product(a) \le \product(p) = 1$.
  On the other hand, if $\lambda > 0$ we have $a_i > p_i > 0$ for all components and $\product(a) > \product(p) = 1$.
\end{proof}

\subsection{The order cone}
\label{sec:global_minimizer}

\begin{figure}
 \begin{center}
  \begin{tikzpicture}
 \pgfdeclarelayer{background}
 \pgfsetlayers{background,main}

 \begin{axis}[
         axis x line=middle,    
         axis y line=middle,    
         axis line style={<->,color=black, thick}, 
         height=0.27\textheight,
         xlabel={$x_1$},
         ylabel={$x_2$},
         xmin=-2.5,xmax=2.5,
         ymin=-2.5,ymax=2.5,
         xtick={-2,-1,...,2},
         ytick={-2,-1,...,2},
         ]

         \begin{pgfonlayer}{background}
          \fill [color=black!10!white] (-2.5,-2.5) -- (2.5,-2.5) -- (2.5,2.5) -- cycle;
         \end{pgfonlayer}

 \end{axis}
  \end{tikzpicture}
  \qquad
\begin{tikzpicture}
  \begin{axis}[
    height=0.25\textheight,
    xlabel={$x_1$},
    ylabel={$x_2$},
    zlabel={$x_3$},
    xmin=-3, xmax=3,
    ymin=-3, ymax=3,
    zmin=-3, zmax=3,
    view={30}{30}, 
    ]


    \addplot3[
    surf,
    shader=interp,
    samples=50,
    domain=-3:3,
    domain y=0:6,
    colormap={example}{color=(black!35) color=(black!35)},
    ] (x, x-y, x-y);

    \addplot3[
    surf,
    shader=interp,
    samples=50,
    domain=-3:3,
    domain y=0:6,
    colormap={example}{color=(black!40) color=(black!40)},
    ] (3, x, x-y);

    \addplot3[
    samples=2,
    domain=-3:3
    ] (3, x, x);

    \addplot3[
    samples=2,
    domain=-3:3
    ] (3, x, -3);

    \addplot3[
    samples=2,
    domain=-3:3
    ] (3, 3, x);

  \end{axis}

\end{tikzpicture}

 \end{center}

 \caption{The order cones for $n=2$ and $n=3$}
\end{figure}
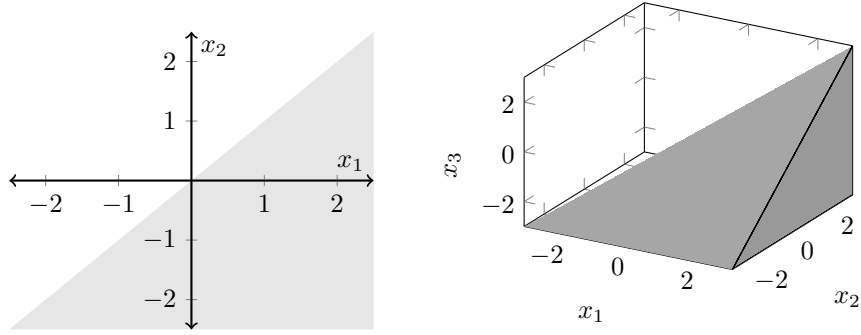

Attempting to locate a global minimizer more precisely, we note that both the distance function
and $\sl(n)$ are symmetric, i.e., invariant
under permutations of the coordinate axes. This allows to assume that $a \in \R_{\ge 0}^n$
has ordered components
 \begin{equation*}
  a_1 \ge a_2 \ge \dots \ge a_n.
 \end{equation*}
We define the set of all points that fulfill this condition.

\begin{definition}
 The order cone is
 \begin{equation*}
  C_n
  \colonequals
  \Big\{ x \in \R^n \; : \; x_1 \ge x_2 \ge \dots \ge x_n \Big\}.
 \end{equation*}
\end{definition}

If $a$ is in this cone, then the cone will also contain a global minimizer.

\begin{lemma} \label{lemma:p-in-cone}
 Let $a\in C_n \cap \R^n_{\ge 0}$. Then there is a  $p \in C_n \cap \sl(n) \cap \R^n_{\ge 0}$
 with $\dist{a}(p) \le \dist{a}(q)$ for all $q\in \sl(n)$.
\end{lemma}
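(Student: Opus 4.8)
The plan is to use the permutation symmetry of both the objective $\dist{a}$ and the admissible set $\sl(n)$, together with a rearrangement-type inequality, to show that any global minimizer can be ``sorted'' into the order cone without increasing the distance. Concretely, let $q \in \sl(n)$ be a global minimizer of $\dist{a}$; by the previous lemma we may assume $q \in \sl(n) \cap \R^n_{\ge 0}$, so in particular $\product(q) = 1$ and all $q_i \ge 0$. Since $\product$ is invariant under permutations of the coordinates, for any permutation $\pi \in S_n$ the permuted vector $q_\pi \colonequals (q_{\pi(1)}, \dots, q_{\pi(n)})^T$ still lies in $\sl(n) \cap \R^n_{\ge 0}$. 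I would then choose $\pi$ so that $q_\pi \in C_n$, i.e., so that the components of $q_\pi$ are sorted in nonincreasing order, matching the ordering already assumed for $a$.

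The key step is the inequality $\dist{a}(q_\pi) \le \dist{a}(q)$. Expanding the squared norm,
\begin{equation*}
 \dist{a}(q) = \tfrac12 \norm{q}_2^2 - \langle a, q\rangle + \tfrac12 \norm{a}_2^2,
\end{equation*}
and since $\norm{q_\pi}_2 = \norm{q}_2$, the inequality reduces to showing $\langle a, q_\pi \rangle \ge \langle a, q \rangle$. This is exactly the classical rearrangement inequality: among all permutations of the coordinates of $q$, the inner product with the fixed nonincreasingly-ordered vector $a$ is maximized when $q$ is also sorted in nonincreasing order. So choosing $\pi$ to be the sorting permutation gives $\langle a, q_\pi \rangle \ge \langle a, q \rangle$, hence $\dist{a}(q_\pi) \le \dist{a}(q)$. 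Since $q$ was a global minimizer, $q_\pi$ is one too, and by construction $q_\pi \in C_n \cap \sl(n) \cap \R^n_{\ge 0}$. Setting $p \colonequals q_\pi$ finishes the proof, because then $\dist{a}(p) = \dist{a}(q) \le \dist{a}(q')$ for every $q' \in \sl(n)$.

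I do not expect a serious obstacle here; the only point requiring a little care is the invocation of the rearrangement inequality, which I would either cite or prove in one line via the standard two-element swap argument: if $q_i < q_j$ but $a_i > a_j$ for some $i < j$, then swapping $q_i$ and $q_j$ changes the inner product by $(a_i - a_j)(q_j - q_i) > 0$, so a finite sequence of such swaps sorts $q$ while never decreasing $\langle a, q \rangle$. One should also note explicitly that these swaps keep the vector in $\sl(n) \cap \R^n_{\ge 0}$, which is immediate since they only permute coordinates. A minor subtlety worth a remark is that this argument produces \emph{a} global minimizer in $C_n$, not uniqueness of the minimizer in $C_n$; the question of uniqueness within the order cone is precisely what is taken up in Chapter~\ref{sec:uniqueness_in_the_order_cone}.
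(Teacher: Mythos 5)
Your proof is correct and is essentially the same argument as the paper's: the paper directly verifies that swapping two adjacent out-of-order components of $p$ does not increase $\dist{a}$, which is exactly the two-element-swap proof of the rearrangement inequality you invoke. Your presentation is a touch cleaner in that it factors the reduction through the decomposition $\dist{a}(q) = \tfrac12\norm{q}_2^2 - \langle a,q\rangle + \tfrac12\norm{a}_2^2$, isolating the inner-product comparison, but the core combinatorial step is identical.
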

\begin{proof}
 Let $p\in \sl(n)$ be a global minimizer of $\dist{a}$. By the previous section we can assume that $p \in \R^n_{\ge 0}$.
 Suppose that there is an index $i$ with $p_i < p_{i+1}$. In that case, there is an $\varepsilon > 0$
 such that $p_{i+1} = p_{i} + \varepsilon$.
 Moreover, there is a $\delta \ge 0$ such that $a_{i+1} = a_{i} - \delta$.
 Define $\hat{p}$ exactly like $p$, only with swapped components
 $\hat{p}_i = p_{i+1}$ and $\hat{p}_{i+1} = p_i$, and note that
 $\hat{p} \in \sl(n)$.
	Then we have
	\begin{align*}
	\dist{a}(p)^2 - \dist{a}(\hat{p})^2
  &=
  \frac{1}{2} \Big[(a_i-p_i)^2 + (a_{i+1} - p_{i+1})^2 - (a_i - p_{i+1})^2 - (a_{i+1} - p_i )^2 \Big]
  \\
  &=
  \frac{1}{2} \Big[(a_i-p_i)^2 + (a_{i} - \delta - p_{i} - \varepsilon)^2 - (a_i - p_{i} - \varepsilon)^2 - (a_{i} - \delta - p_i )^2 \Big]
  \\
  &=
  \varepsilon\delta \ge 0 .
	\end{align*}
 Hence, swapping the components $p_i$ and $p_{i+1}$ does not increase $\dist{a}$.
 Since $p$ is a global minimizer, we conclude that $\hat p$ is also a global minimizer.
 Following the idea of the bubblesort sorting algorithm, one can swap components
 until arriving at a global minimizer in $C_n \cap \sl(n) \cap \R^n_{\ge 0}$.
\end{proof}

The previous argument established that to find a global minimizer we can transform $a$
to be in the positive order cone by permutations of components.
A global minimizer can then be found in the same cone, and a global minimizer
of the original problem is obtained by reverting the permutations.

Before ending this chapter, for later use we quickly prove here the simple fact that $C_n$ is polyhedral,
and give its spanning directions.

\begin{lemma}
 The set $C_n \cap \R^n_{\ge 0}$ is a polyhedral cone.
 It is spanned by the vectors $b_1,\dots,b_n \in \R^n$ with%
 \footnote{It is obvious that any of the vectors $b_i$ can be scaled with an arbitrary
 positive number without changing the cone.  The scaling chosen here simplifies the notation
 in subsequent chapters.}
 \begin{equation}
 \label{eq:order_cone_spanning_directions}
  b_i
  \colonequals
  \begin{cases}
  \big( \underbrace{n,\dots,n}_{\text{$i$ many}},
  \underbrace{0,\dots,0}_{\text{$n-i$ many}} \big)^T  & \text{if $i<n$} \\
  \big( 1,\dots,1 \big)^T & \text{if $i=n$}.
  \end{cases}
 \end{equation}
\end{lemma}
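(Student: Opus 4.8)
The plan is to verify the two claims separately: first that $C_n \cap \R^n_{\ge 0}$ is a polyhedral cone, and then that the stated vectors $b_1,\dots,b_n$ span it. The first part is immediate: $C_n$ is cut out by the finitely many linear inequalities $x_i - x_{i+1} \ge 0$ for $i=1,\dots,n-1$, and $\R^n_{\ge 0}$ by the inequalities $x_i \ge 0$; intersecting, $C_n \cap \R^n_{\ge 0}$ is the solution set of finitely many homogeneous linear inequalities, hence a polyhedral cone. In fact, since $x \in C_n$ forces $x_n \ge 0$ to be the only binding nonnegativity constraint, $C_n \cap \R^n_{\ge 0} = \{x : x_1 \ge x_2 \ge \dots \ge x_n \ge 0\}$.

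For the spanning claim I would show both inclusions. For $\operatorname{cone}(b_1,\dots,b_n) \subseteq C_n \cap \R^n_{\ge 0}$ it suffices to check that each $b_i$ lies in $C_n \cap \R^n_{\ge 0}$, which is clear from~\eqref{eq:order_cone_spanning_directions}: each $b_i$ has nonincreasing nonnegative entries. Since a polyhedral cone is convex and closed under nonnegative scaling, every nonnegative combination stays in it. For the reverse inclusion, given $x$ with $x_1 \ge x_2 \ge \dots \ge x_n \ge 0$, I would exhibit explicit nonnegative coefficients. Writing the differences $d_i \colonequals x_i - x_{i+1} \ge 0$ for $i = 1,\dots,n-1$ and $d_n \colonequals x_n \ge 0$, one has the telescoping identity $x_j = \sum_{i \ge j} d_i$ for each $j$. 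Comparing coordinates, this says exactly
\begin{equation*}
  x = \sum_{i=1}^{n-1} \frac{d_i}{n}\, b_i + d_n\, b_n,
\end{equation*}
because $\tfrac{1}{n} b_i$ is the indicator vector of the first $i$ coordinates and $b_n = \mathbbm{1}$, so the $j$-th coordinate of the right-hand side is $\sum_{i=j}^{n-1} d_i + d_n = \sum_{i \ge j} d_i = x_j$. All coefficients $d_i/n$ and $d_n$ are nonnegative, so $x \in \operatorname{cone}(b_1,\dots,b_n)$.

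There is essentially no obstacle here; the only mild point of care is bookkeeping the factor $n$ so that the coefficients come out as the "slopes" $d_i$ rather than $d_i/n$—the scaling in~\eqref{eq:order_cone_spanning_directions} is chosen precisely so that the coefficients in the decomposition of a point of $C_n$ are the consecutive differences of its coordinates, which is the form that will be convenient later. One could also remark that the $b_i$ are linearly independent (they are, since the matrix with rows $b_i$ is triangular with nonzero diagonal), so this is in fact a simplicial cone and the representation above is unique, but that is not needed for the statement.
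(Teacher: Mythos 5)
Your proof is correct and takes essentially the same route as the paper: express $x$ as a nonnegative combination of the $b_i$ with coefficients given by the consecutive differences, and argue the reverse inclusion by membership of each $b_i$ plus closedness of the cone under nonnegative combinations. One small quibble with the closing aside: with the scaling of~\eqref{eq:order_cone_spanning_directions} the coefficients are $d_i/n$ for $i<n$, not $d_i$ (the paper's footnote says the factor $n$ is chosen to simplify notation elsewhere, not to make the coefficients the raw differences), but this does not affect the proof itself.
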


Note also that the cone $C_n \cap \R^n_{\ge 0}$ is salient, i.e., it does not contain a linear subspace.

\begin{proof}
 Let $\operatorname{cone}(b_1,\dots,b_n) \subset \R^n$ denote the conical combination
 of the vectors $b_1,\dots,b_n$.  We first show that $C_n \cap \R^n_{\ge 0} \subset \operatorname{cone}(b_1,\dots,b_n)$.
 For this, let $x \in C_n \cap \R^n_{\ge 0}$.  Then
 \begin{equation}
  \label{eq:conical_combination}
  x = \sum_{i=1}^n x_i^c b_i
 \end{equation}
 with
 \begin{equation*}
  x_i^c
  \colonequals
  \begin{cases}
   x_i & \text{if $i=n$}, \\
   \frac{1}{n}(x_i - x_{i+1}) & \text{otherwise}.
  \end{cases}
 \end{equation*}
 By construction $x_1^c,\dots,x_n^c \ge 0$, and therefore
 \eqref{eq:conical_combination} is a valid conical combination.
 This proves $C_n \cap \R_{\ge 0}^n \subset \operatorname{cone}(b_1,\dots,b_n)$.

 To show the converse,
 note first that all $b_i$ are in $C_n \cap \R^n_{\ge 0}$, and that linear combinations
 with nonnegative coefficients of elements of $C_n \cap \R^n_{\ge 0}$ are in $C_n \cap \R^n_{\ge 0}$.
 Hence $\operatorname{cone}(b_1,\dots,b_n) \subset C_n \cap \R^n_{\ge 0}$.
\end{proof}

\subsection{Uniqueness and non-uniqueness in the order cone}
\label{sec:uniqueness_in_the_order_cone}

In the previous section we have shown that for any point $a \in C_n \cap \R^n_{\ge 0}$ there is
at least one point $p \in C_n \cap \sl(n) \cap \R^n_{\ge 0}$ that minimizes the squared Euclidean
distance $\dist{a}$ globally.
By strict convexity of $\sl^+(n) \cap \R^n_{\ge 0}$, uniqueness of that minimizer
follows directly if $\product(a) \le 1$.
This includes the case that $a$ has zero components, which happens if $A$ is not invertible.

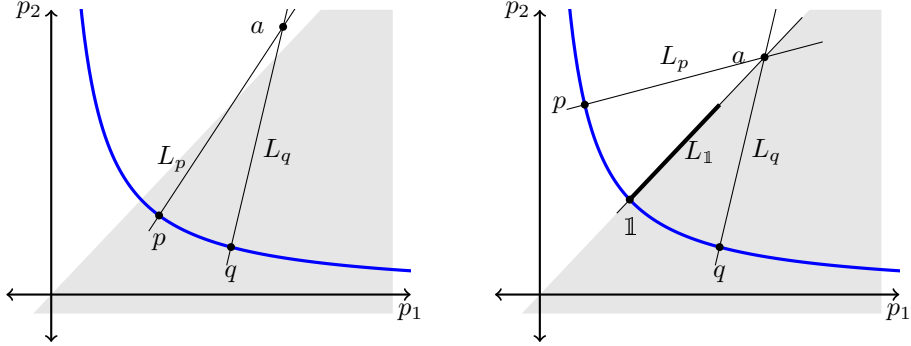
\begin{figure}
  \centering
  \begin{tikzpicture}

    \pgfdeclarelayer{background}
    \pgfsetlayers{background,main}

    \begin{axis}[
     axis x line=middle,
     axis y line=middle,
     axis line style={<->,color=black, thick},
     xlabel={$p_1$},
     ylabel={$p_2$},
     x label style={anchor=north},
     y label style={anchor=east},
     width=0.55\textwidth,
     xmin=-0.5,xmax=4,
     ymin=-0.5,ymax=3,
     xtick={0,5,...,},
     ytick={0,5,...,},
    ]

    \begin{pgfonlayer}{background}
     \fill [color=black!10!white] (-0.2,-0.2) -- (3.8,-0.2) -- (3.8,3.0) -- (3,3) -- cycle;
    \end{pgfonlayer}

    \clip(0.13,0) rectangle (5.29,3.27);
    \draw[blue, very thick,smooth,samples=100,domain=0.13409213855659688:5.285979884247383] plot(\x,{1/\x});

    \coordinate (p) at (1.2,1/1.2);
    \coordinate (pNorm) at (axis direction cs:1/1.2, 1.2);

    \fill (p) circle (1.5pt);
    \node [label=below:{$p$}] at (p) {};
    \draw [shorten <= -0.25cm, shorten >= -2.0cm, name path=Lp] (p) -- +(pNorm) node [midway, left] {$L_p$};

    \coordinate (q) at (2,1/2);
    \coordinate (qNorm) at (axis direction cs:1/2, 2);

    \fill (q) circle (1.5pt);
    \node [label=below:{$q$}] at (q) {};
    \draw [shorten <= -0.25cm, shorten >= -1.0cm, name path=Lq] (q) -- +(qNorm) node [midway,right] {$L_q$};


    \coordinate (a) at (2.58,2.82);
    \fill (a) circle (1.5pt);
    \node [label=left:{$a$}] at (a) {};

   \end{axis}

  \end{tikzpicture}
  \qquad
  \begin{tikzpicture}

    \pgfdeclarelayer{background}
    \pgfsetlayers{background,main}

    \begin{axis}[
     axis x line=middle,
     axis y line=middle,
     axis line style={<->,color=black, thick},
     xlabel={$p_1$},
     ylabel={$p_2$},
     x label style={anchor=north},
     y label style={anchor=east},
     width=0.55\textwidth,
     xmin=-0.5,xmax=4,
     ymin=-0.5,ymax=3,
     xtick={0,5,...,},
     ytick={0,5,...,},
    ]

    \begin{pgfonlayer}{background}
     \fill [color=black!10!white] (-0.2,-0.2) -- (3.8,-0.2) -- (3.8,3.0) -- (3,3) -- cycle;
    \end{pgfonlayer}

    \clip(0.13,0) rectangle (5.29,3.27);
    \draw[blue, very thick,smooth,samples=100,domain=0.13409213855659688:5.285979884247383] plot(\x,{1/\x});

    \draw[black, ultra thick] (1,1) -- (2,2);

    \coordinate (p) at (1/2,2);
    \coordinate (pNorm) at (axis direction cs:2, 1/2);

    \fill (p) circle (1.5pt);
    \node [label=left:{$p$}] at (p) {};
    \draw [shorten <= -0.25cm, shorten >= -0.8cm] (p) -- +(pNorm) node [midway, above] {$L_p$};

    \coordinate (one) at (1,1);
    \coordinate (oneNorm) at (axis direction cs:1, 1);

    \fill (one) circle (1.5pt);
    \node [label=below:{$\mathbbm{1}$}] at (one) {};
    \draw [shorten <= -0.25cm, shorten >= -1.6cm] (one) -- +(oneNorm) node [midway,right] {$L_\mathbbm{1}$};

    \coordinate (q) at (2,1/2);
    \coordinate (qNorm) at (axis direction cs:1/2, 2);

    \fill (q) circle (1.5pt);
    \node [label=below:{$q$}] at (q) {};
    \draw [shorten <= -0.25cm, shorten >= -1.0cm] (q) -- +(qNorm) node [midway,right] {$L_q$};

    \coordinate (a) at (2.5,2.5);
    \fill (a) circle (1.5pt);
    \node [label=left:{$a$}] at (a) {};

   \end{axis}

  \end{tikzpicture}

  \caption{Illustrations of the proofs of Lemmas~\ref{lem:uniqueness-n-2} and~\ref{lem:non_generic_case_2d},
   which both discuss the case $n=2$. Left: If two points $p$ and $q$ in the interior of the order cone (in grey)
   are both stationary for the squared distance to a point~$a$, then this point $a$ cannot also be
   in the order cone. Right: If $a_1 = a_2 > 2$, then there are exactly three stationary points.}
\end{figure}

For $\product(a) > 1$, the order cone can contain more than one stationary
point, and it is generally unclear how to single out the global minimizer
algorithmically.  An exception is the case $n=2$, where the stationary points
of the distance $\dist{a}$ can be classified completely.
We treat this case first.

\subsubsection{Uniqueness of the global minimizer in the order cone for $n=2$}

In the following lemma, $\mathring{C}_2$ denotes the interior
of the order cone.

\begin{lemma}\label{lem:uniqueness-n-2}
  Let $a\in C_2$, that is $a_1 \ge a_2 \ge 0$, and assume $\product(a) = a_1a_2 > 1$.
  Then there exists a unique stationary point $p\in \mathring{C}_2 \cap \sl(2) \cap \R_{\ge 0}^2$
  of the squared Euclidean distance to $a$.
\end{lemma}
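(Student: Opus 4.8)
The plan is to work in the parametrization $t\mapsto(t,t^{-1})$ of $\sl(2)\cap\R^2_{\ge 0}$, under which $\mathring C_2\cap\sl(2)\cap\R^2_{\ge 0}$ is precisely the open arc $\{(t,t^{-1}):t>1\}$. For $p=(t,t^{-1})$ the vector $\nabla\product(p)=p^{-1}=(t^{-1},t)$ is normal to $\sl(2)$, so by~\eqref{eq:lagrange_vector_formulation} a point $a$ has $p$ as a stationary point exactly when $a$ lies on the normal line $L_p\colonequals\{p+\mu p^{-1}:\mu\in\R\}$, with $\mu$ the associated multiplier. A one-line computation gives the identity
\begin{equation*}
 a_1-a_2=(t-t^{-1})(1-\mu),
\end{equation*}
so for $t>1$ we have $a\in C_2$ if and only if $\mu\le 1$; moreover $\mu>0$ by Lemma~\ref{lem:sign_of_lambda} since $\product(a)>1$.

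I would prove existence first. Along the arc, the squared distance $f(t)=\half\bigl((t-a_1)^2+(t^{-1}-a_2)^2\bigr)$ satisfies $t^3 f'(t)=t^4-a_1t^3+a_2t-1$, which equals $a_2-a_1$ at $t=1$ and tends to $+\infty$. When $a_1>a_2$ this is negative at $t=1$, so by the intermediate value theorem $f'$ vanishes at some $t_0\in(1,\infty)$, giving a stationary point $(t_0,1/t_0)\in\mathring C_2\cap\sl(2)\cap\R^2_{\ge 0}$. In the borderline case $a_1=a_2$ the polynomial factors as $t^4-a_1t^3+a_1t-1=(t^2-1)(t^2-a_1t+1)$, whose only root in $(1,\infty)$ exists precisely when $a_1>2$; this is the genuinely exceptional configuration (for $1<a_1\le 2$ the unique stationary point is $\mathbbm{1}$, sitting on $\partial C_2$), and I would either restrict the statement to $a_1>a_2$ or handle it together with the companion result Lemma~\ref{lem:non_generic_case_2d}.

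For uniqueness, suppose $p=(t,t^{-1})$ and $q=(u,u^{-1})$ are two \emph{distinct} stationary points in $\mathring C_2\cap\sl(2)\cap\R^2_{\ge 0}$, say $1<t<u$. Then $a\in L_p\cap L_q$, and since the direction vectors $(t^{-1},t)$ and $(u^{-1},u)$ are proportional only for $t=u$, the lines $L_p$ and $L_q$ are not parallel and meet in the single point $a$. Solving the two linear equations $a=p+\mu p^{-1}=q+\nu q^{-1}$ for $\mu$ yields, after the off-diagonal terms cancel,
\begin{equation*}
 \mu=\frac{tu^3+1}{u(t+u)},
\end{equation*}
and hence $\mu>1\iff tu^3+1>u^2+tu\iff tu\,(u^2-1)>u^2-1\iff tu>1$, which holds because $t,u>1$. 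By the identity above, $\mu>1$ forces $a_1<a_2$, contradicting $a\in C_2$. Thus there is at most one interior stationary point, which together with existence proves the lemma.

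The step I expect to be delicate is the uniqueness computation: one must set up the $2\times 2$ system for $\mu$ correctly, exploit the cancellations that make the clean criterion $tu>1$ appear, and keep track that the two normal lines genuinely intersect, so that ``$a$ lies on both'' is a real constraint rather than vacuous. The existence half is routine via the intermediate value theorem, the only real subtlety being the diagonal case $a_1=a_2\le 2$, where the unique stationary point lies on the boundary of the order cone rather than in its interior.
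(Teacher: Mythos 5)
Your uniqueness argument coincides with the paper's. Both parametrize the positive branch of $\sl(2)$ by $t\mapsto(t,t^{-1})$, suppose two distinct stationary points $p=(t,t^{-1})$, $q=(u,u^{-1})$ with $t,u>1$, compute the multiplier at the intersection of the normal lines as $\mu=(tu^3+1)/(u(t+u))$, and reduce $\mu>1$ to $(tu-1)(u^2-1)>0$. The only difference is cosmetic: the paper derives the contradiction by observing that $L_p$ meets the diagonal (the boundary of $C_2$) exactly at multiplier value $1$, whereas you write the equivalent identity $a_1-a_2=(t-t^{-1})(1-\mu)$, which packages the same fact a bit more cleanly.

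The substantive difference is that you also address existence, which the paper does not: its argument is a pure proof by contradiction and establishes only that there is \emph{at most one} interior stationary point. Your intermediate-value argument on $t^3f'(t)=t^4-a_1t^3+a_2t-1$ correctly supplies existence when $a_1>a_2$, and your observation about the diagonal case is a genuine catch: for $a_1=a_2\in(1,2]$, the factorization $(t^2-1)(t^2-a_1t+1)$ has no root in $(1,\infty)$, so the only stationary point is $\mathbbm{1}\in\partial C_2$ and the lemma's claim that there exists a stationary point in $\mathring{C}_2\cap\sl(2)\cap\R^2_{\ge0}$ fails there. This is consistent with the paper's own Lemma~\ref{lem:non_generic_case_2d}, part~\ref{item:2d_unique_stationary_point}. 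The downstream use in part~\ref{item:2d_pair_of_stationary_points} of that lemma only needs the uniqueness half, which both proofs establish, but your suggestion to restrict the statement to $a_1>a_2$ or to merge it with the companion lemma is the right fix for the existence overstatement.
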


\begin{proof}
  The proof is done by contradiction.
  Let $p\neq q\in \mathring{C}_2 \cap \sl(2)\cap \R_{\ge 0}^2$ both be solutions of the stationarity
  condition~\eqref{eq:lagrange_vector_formulation}.
  Then, by construction, the point $a$ is the intersection of the two lines
  \begin{equation}
  \label{eq:normal_lines}
    L_p \colonequals \big\{ p + \lambda p^{-1}:\ \lambda\in\R \big\},
    \qquad
    L_q \colonequals \big\{ q + \mu q^{-1} :\ \mu\in\R \big\}.
  \end{equation}
  Since $\nabla \product(x) = (x_1^{-1},x_2^{-1})^T$
  on $\sl(2)$,
  these lines are normal to the admissible set $\sl(2)$.
  The line $L_p$ intersects the boundary of the order cone at a single point $(p_1 + p_2,p_1+p_2)$,
  which corresponds to the value $\lambda = p_1p_2 = 1$.
  Likewise, $L_q$ intersects the boundary of the order cone
  at $(q_1+q_2,q_1+q_2)$ for $\mu = q_1q_2 = 1$.

  We now compute the values of $\lambda$ and $\mu$ that correspond to the intersection~$a$
  of the lines $L_p$ and $L_q$, and show that both are larger than~$1$, which contradicts
  the assumption $a \in C_2 \cap \R^2_{\ge 0}$.
  The values $\lambda$ and $\mu$ of the intersection solve the linear system
  \begin{equation*}
    \begin{pmatrix}
      -p_1^{-1} & q_1^{-1} \\
      -p_2^{-1} & q_2^{-1}
    \end{pmatrix}  \begin{pmatrix}
      \lambda \\ \mu
    \end{pmatrix} = \begin{pmatrix}
      p_1 - q_1 \\
      p_2 - q_2
    \end{pmatrix}.
  \end{equation*}
  There is a unique solution because $p \neq q$ and therefore the
  determinant of the matrix is nonzero.
  Using that $p_2 = p_1^{-1}$ and $q_2 = q_1^{-1}$ together with Cramer's rule we get the solution
  \begin{equation}\label{eq:lambda_mu}
    \lambda = \frac{ p_1q_1^3 + 1 }{ q_1( p_1 + q_1) },
    \qquad
    \mu =  \frac{ p_1^3q_1 + 1 }{ p_1( p_1 + q_1) }.
  \end{equation}
  To see now that $\lambda > 1$, multiply the left equation of~\eqref{eq:lambda_mu}
  by the denominator (which is positive) to obtain the equivalent condition
  \begin{equation*}
   p_1 q_1^3 + 1 > q_1 (p_1 + q_1).
  \end{equation*}
  This can be rewritten as
  \begin{align*}
   p_1 q_1 (q_1^2 - 1) + 1 - q_1^2
   =
   p_1 q_1 (q_1^2 - 1) - (q_1^2 - 1)
   =
   (p_1 q_1 - 1) (q_1^2 - 1)
   >
   0,
  \end{align*}
  which holds because both $p_1 > 1$ and $q_1 > 1$.
\end{proof}

For the non-generic case $a = (a_1, a_2)$ with $a_1 = a_2$ we can say
even more.

\begin{lemma}
\label{lem:non_generic_case_2d}
 Let $a = (a_1,a_2)$ such that $a_1 = a_2 \ge 0$.
 \begin{enumerate}
  \item \label{item:2d_unique_stationary_point}
  If $a_1 = a_2 \le 2$ then $\dist{a}$ has a unique stationary point $p = (1,1)^T$.

  \item \label{item:2d_pair_of_stationary_points}
  If $a_1 = a_2 > 2$ then there are two additional stationary points
   \begin{equation}
   \label{eq:2d_pair_of_stationary_points}
    p
    =
    \begin{pmatrix}
    \frac{a}{2} + \sqrt{\frac{a^2}{4} - 1} \\
    \frac{a}{2} - \sqrt{\frac{a^2}{4} - 1}
    \end{pmatrix}
    \qquad
    q = a - p
    =
    \begin{pmatrix}
    \frac{a}{2} - \sqrt{\frac{a^2}{4} - 1} \\
    \frac{a}{2} + \sqrt{\frac{a^2}{4} - 1}
    \end{pmatrix}.
   \end{equation}
  These are the only further ones in $\sl(2) \cap \R_{\ge 0}^2$,
  and they are global minimizers.
 \end{enumerate}
\end{lemma}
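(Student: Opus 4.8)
The plan is to solve the stationarity system~\eqref{eq:lagrange_vector_formulation} explicitly for $n=2$ under the hypothesis $a_1 = a_2 \equalscolon a$. Restricting to the positive component $\sl(2) \cap \R^2_{\ge 0}$---which is where a global minimizer lies by the results of Section~\ref{sec:structure_of_admissible_set}---a stationary point $p = (p_1,p_2)^T$ with multiplier $\lambda$ satisfies $p_1 p_2 = 1$ together with $p_i^2 - a p_i + \lambda = 0$ for $i=1,2$ (multiply the $i$-th equation of~\eqref{eq:lagrange_vector_formulation} by $p_i \ne 0$). Hence $p_1$ and $p_2$ are both roots of the single quadratic $t^2 - a t + \lambda = 0$. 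I would then distinguish two cases. If $p_1 = p_2$, then $p_1 = p_2 = 1$ is the positive solution of $p_1^2 = 1$, with $\lambda = a-1$; this is a stationary point for every $a \ge 0$. If $p_1 \ne p_2$, then $p_1$ and $p_2$ are the two distinct roots of $t^2 - at + \lambda = 0$, so Vieta's formulas give $p_1 + p_2 = a$ and $p_1 p_2 = \lambda$; comparing with $p_1 p_2 = 1$ forces $\lambda = 1$, so $p_1, p_2$ are the roots of $t^2 - a t + 1 = 0$, namely $\tfrac{a}{2} \pm \sqrt{\tfrac{a^2}{4} - 1}$. These are real, distinct and positive precisely when $a > 2$ (for $a \le 2$ the discriminant is nonpositive, and at $a = 2$ the double root is again $1$). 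This gives the classification in both parts: for $a \le 2$ the unique stationary point in $\sl(2) \cap \R^2_{\ge 0}$ is $(1,1)^T$, while for $a > 2$ the only additional ones are the pair in~\eqref{eq:2d_pair_of_stationary_points}, with $q = a - p$ since $p_1 + p_2 = a$.

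To finish part~(2) it remains to show that, for $a > 2$, the pair $p$, $q$---rather than $(1,1)^T$---are the global minimizers. I would simply compare the three stationary values of $\dist{a}$. Using $a - p_1 = p_2$ and $a - p_2 = p_1$ one gets $2\,\dist{a}(p) = p_1^2 + p_2^2 = (p_1+p_2)^2 - 2 p_1 p_2 = a^2 - 2$, and likewise for $q$, whereas $2\,\dist{a}\big((1,1)^T\big) = 2(a-1)^2$. Their difference is $2(a-1)^2 - (a^2-2) = (a-2)^2 > 0$, so $p$ and $q$ are strictly closer to $a$ than $(1,1)^T$. Since $\dist{a}$ attains its global minimum on the closed set $\sl(2)$ at a point of the positive component $\sl(2) \cap \R^2_{\ge 0}$, on which the constraint gradient $p^{-1}$ never vanishes, every global minimizer solves~\eqref{eq:lagrange_vector_formulation} and is therefore one of the three stationary points found above; it must hence be $p$ or $q$.

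The argument is essentially a direct computation, so I do not expect a serious obstacle. The only slightly delicate point is the implication ``global minimizer $\Rightarrow$ stationary point'', which relies on first locating the minimizer in $\sl(2) \cap \R^2_{\ge 0}$ via Section~\ref{sec:structure_of_admissible_set} and then noting that $\sl(2) \cap \R^2_{\ge 0}$ is a smooth hypersurface on which the constraint qualification $\nabla \product \ne 0$ holds, so that the Lagrange conditions are necessary. I would also spell out the consistency at the threshold $a = 2$: there the ``extra'' roots degenerate to $(1,1)^T$, so parts~(1) and~(2) agree.
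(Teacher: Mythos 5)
Your proof is correct, and it takes a genuinely different route from the paper's. The paper argues geometrically: for part~(1) it tracks where the normal line $L_p$ meets the diagonal axis (at $(p_1+p_2,p_1+p_2)$, which is always above $2$ when $p\neq\mathbbm{1}$), for part~(2) it verifies stationarity of the explicit pair and then invokes Lemma~\ref{lem:uniqueness-n-2} to rule out further stationary points in the open cone, and it establishes global optimality by showing that $(1,1)^T$ is a local \emph{maximizer} (via the second derivative $\dist{\exp}''(0)=4-2a_1$ in Lie-exponential coordinates) and then appealing to coercivity and the symmetry $p\leftrightarrow q$. You instead classify all stationary points in $\sl(2)\cap\R^2_{\ge 0}$ algebraically in one stroke: since $a_1=a_2$, both $p_1$ and $p_2$ solve the same quadratic $t^2-at+\lambda=0$, so either they coincide (giving $\mathbbm{1}$ and $\lambda=a-1$) or Vieta's formulas combined with $p_1p_2=1$ force $\lambda=1$ and hence the pair in~\eqref{eq:2d_pair_of_stationary_points}, which is real and distinct exactly when $a>2$. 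You then settle global optimality by a direct value comparison, $2\dist{a}(\mathbbm{1})-2\dist{a}(p)=(a-2)^2>0$, together with the observation that a global minimizer in $\sl(2)\cap\R^2_{\ge 0}$ must satisfy the Lagrange conditions because $\nabla\product$ never vanishes there. Your route is more elementary and self-contained: it avoids the dependence on Lemma~\ref{lem:uniqueness-n-2} and replaces the local-maximizer-plus-coercivity-plus-symmetry chain by a one-line arithmetic comparison that even yields a quantitative gap. The paper's route, on the other hand, exhibits the geometric mechanism (normal lines, local maximizer at the cone boundary) that generalizes conceptually to the discussion of the case $n\ge 3$ in the next subsection.
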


\begin{proof}
 \ref{item:2d_unique_stationary_point})
 Let $p \in \sl(2) \cap \R^2_{\ge 0}$ be a stationary point for the distance to
 the point $a$.  If $p = (1,1)^T$ then the normal line
 \begin{equation*}
  L_p \colonequals \big\{ p + \lambda p^{-1} \; : \; \lambda \in \R\big\}
 \end{equation*}
 is the diagonal axis, and hence all points $a$ with $a_1 = a_2$ have $(1,1)^T$
 as stationary point.

 If $p \neq (1,1)^T$ then the intersection of $L_p$ with the diagonal is at
 $(p_1 + p_2, p_1 + p_2)$ (for $\lambda = p_1 p_2$).  But since $p_1p_2 = 1$
 and $p_1, p_2 > 0$ we have $p_1 + p_2 = p_1 + \frac{1}{p_1} > 2$, and therefore
 no point on the diagonal with $a_1 = a_2 \le 2$ can have a second
 stationary point.

 \ref{item:2d_pair_of_stationary_points})
 For all points with $a_1 = a_2 > 2$,
 the pair of points~\eqref{eq:2d_pair_of_stationary_points}
 are stationary, as can be shown by checking the stationarity
 conditions~\eqref{eq:lagrange_vector_formulation} with $\lambda = 1$.
 Lemma~\ref{lem:uniqueness-n-2} implies that these are the only ones.

 To show that the two points of~\eqref{eq:2d_pair_of_stationary_points}
 are global minimizers, we show that $(1,1)^T$ is a local maximizer
 if $a_1 = a_2 > 2$.  The other two stationary points are then local
 minimizers by the coercivity of $\dist{a}$ on $\sl(2)$, and the local
 minimizers are global because they both have the same minimal value
 by symmetry of the problem.

 To show that $(1,1)^T$ is a local maximizer, parametrize $\sl(2)$
 using the Lie exponential map~\eqref{eq:exponential_map},
 and write the squared distance as
 \begin{equation*}
  \dist{\exp}(t)
  \colonequals
  \dist{a}(\exp (t,-t)^T)
  =
  \frac{1}{2} (\exp t - a_1)^2 + \frac{1}{2} (\exp (-t) - a_2)^2.
 \end{equation*}
 Since $a_1 = a_2$, its second derivative is
 \begin{equation*}
  \dist{\exp}''(t)
  =
  2 \big(\exp(2t) + \exp(-2t)\big) - a_1\big(\exp t + \exp (-t)\big).
 \end{equation*}
 At $t=0$ this is $4-2a_1$, which is negative if and only if $a_1>2$.
\end{proof}

\subsubsection{The case $n \ge 3$}
\label{sec:nonuniqueness}

In higher dimensions, the order cone can contain more than one stationary point.
To show this we introduce a particular path, which will later also serve
as the basis of one numerical algorithm. Let $a$ be a given point in $C_n \cap \R^n_{\ge 0}$.

\begin{definition}[Solution path]\label{def:solution_path}
 The positive and negative branches of the solution path are
 \begin{alignat*}{2}
  \mathcal{P}^+(\lambda) & : \big(-\infty, \tfrac{a_n^2}{4}\big] \to \R^n
  & \qquad
  \mathcal P^+(\lambda)_i & \coloneqq \tfrac{a_i}{2} + \sqrt{ \tfrac{a_i^2}{4} - \lambda},
  \qquad
  i=1,\hdots,n
  \shortintertext{and}
  \mathcal{P}^-(\lambda) & : \big(0, \tfrac{a_n^2}{4}\big] \to \R^n
  &
  \mathcal P^-(\lambda)_i
  & \coloneqq
  \begin{cases}
   \frac{a_i}{2} + \sqrt{ \frac{a_i^2}{4} - \lambda} & \text{if $i=1,\hdots,n-1$}, \\
   \frac{a_i}{2} - \sqrt{ \frac{a_i^2}{4} - \lambda} & \text{if $i=n$},
  \end{cases}
 \end{alignat*}
 respectively. Combine the two paths into a single one by
  \begin{equation*}
    \mathcal P : \big(-\infty ,\tfrac{a_n^2}{2}\big] \to \R^n,
    \qquad
    \mathcal P(\lambda)
    \coloneqq
    \begin{cases}
      \mathcal P^+(\lambda) & \text{if $\lambda < \tfrac{a_n^2}{4}$} \\
      \mathcal P^-\big(\tfrac{a_n^2}{2}-\lambda\big) & \text{if $\lambda \ge \tfrac{a_n^2}{4}$}.
    \end{cases}
  \end{equation*}
\end{definition}
By construction, no negative numbers appear under the square root,
and the path is well-defined.
Figure~\ref{fig:solution-path} shows an example path for $n=2$.
The following lemma shows the relevant properties of the path.

\begin{figure}
  \centering
 \begin{tikzpicture}

 \pgfdeclarelayer{background}
 \pgfsetlayers{background,main}

 \begin{axis}[
         axis x line=middle,    
         axis y line=middle,    
         axis line style={<->,color=black, thick}, 
         xmin=-0.5,xmax=3.5,
         ymin=-0.5,ymax=3,
         xtick={0,1,...,3},
         ]

         \begin{pgfonlayer}{background}
          \fill [color=black!10!white] (0,0) -- (3.3,0) -- (3.3,3.3) -- cycle;
         \end{pgfonlayer}

         \addplot [domain=0:4,samples=50,thick,color=black]{1/x};
         \node at (1.0,1.5) {$\sl(n)$};

         \def \ax{2.5}
         \def \ay{2.0}
         \coordinate (a) at (\ax, \ay);

         \addplot [domain=-2.7:{\ay^2/4},samples=50,thick,color=green]({\ax/2 + sqrt(\ax^2/4 - x)},{\ay/2 + sqrt(\ay^2/4 - x)});
         \node at (2.3,1.35) {$\mathcal{P}^+$};

         \addplot [domain=0:{\ay^2/4},samples=50,thick,color=red]({\ax/2 + sqrt(\ax^2/4 - x)},{\ay/2 - sqrt(\ay^2/4 - x)});
         \node at (2.3,0.65) {$\mathcal{P}^-$};

         \draw[fill] (\ax,\ay) circle (3pt);
         \node at (\ax+0.15,\ay-0.15) {$a$};

         \draw[fill] (\ax,0.0) circle (3pt);
         \node at (\ax+0.15,0.15) {$\bar a$};
 \end{axis}
 \end{tikzpicture}
  \caption{The solution path for \(a=(2.5, 2)^T\)}
  \label{fig:solution-path}
\end{figure}
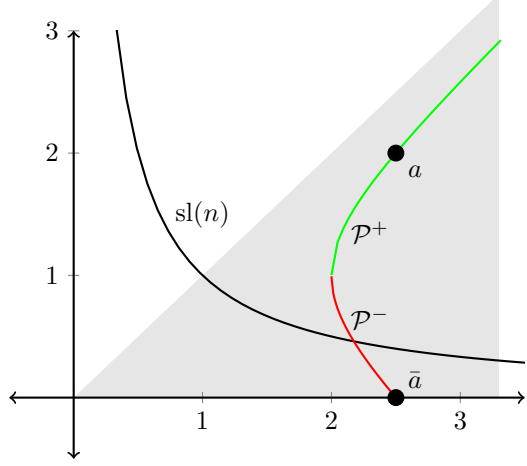

\begin{lemma} \mbox{}
\label{lem:solution_path_properties}
 \begin{enumerate}[label=(A\arabic*)]
  \item \label{item:solution_path_continuity}
  The solution path is continuous.

  \item \label{item:solution_path_ends}
  $\mathcal{P}(0) = a$ and $\mathcal{P}\big(\frac{a_n^2}{2}\big) = \bar{a} \colonequals (a_1,\dots,a_{n-1},0)$.

  \item \label{item:solution_path_ordering}
  $\mathcal{P}(\lambda) \in C_n$ for all $\lambda \in \big(-\infty,\frac{a_n^2}{2}\big]$.

  \item \label{item:solution_path_solutions}
  For any $\lambda \in \big(-\infty,\frac{a_n^2}{2}\big]$ with $\product \big(\mathcal{P}(\lambda) \big) = 1$,
  the point $\mathcal{P}(\lambda)$ is a stationary point of the squared distance $\dist{a}$.

  \item \label{item:solution_path_monotony}
  For $\lambda \le 0$, the map $\lambda \mapsto \product \big(\mathcal{P}(\lambda) \big)$
  is strictly monotone decreasing, and $\lim_{\lambda \to -\infty} \product \big(\mathcal{P}(\lambda)\big) = \infty$.
 \end{enumerate}
\end{lemma}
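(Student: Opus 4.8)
The plan is to verify the five claims one after another, using throughout the standing assumption $a_1 \ge \dots \ge a_n \ge 0$ together with the following observation: by construction every component $\mathcal P(\lambda)_i$ is a root of the quadratic $x^2 - a_i x + c = 0$, where the constant is $c = \lambda$ on the branch $\mathcal P^+$ and $c = \mu \colonequals \tfrac{a_n^2}{2} - \lambda$ on the branch $\mathcal P^-$; on $\mathcal P^+$ one always takes the larger root, on $\mathcal P^-$ the larger root for $i<n$ and the smaller one for $i=n$. First I would record that all the square roots that occur are of nonnegative numbers: for $\lambda \le \tfrac{a_n^2}{4}$ one has $\tfrac{a_i^2}{4}-\lambda \ge \tfrac{a_i^2}{4}-\tfrac{a_n^2}{4}\ge 0$ since $a_i\ge a_n$, and the same estimate holds with $\mu$ in place of $\lambda$ for $\mu\in[0,\tfrac{a_n^2}{4}]$. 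This makes each branch a continuous function of its parameter; for~\ref{item:solution_path_continuity} it then only remains to check the gluing point $\lambda=\tfrac{a_n^2}{4}$, where the left limit $\mathcal P^+(\tfrac{a_n^2}{4})$ and the right limit $\mathcal P^-(\tfrac{a_n^2}{4})$ agree because the $n$-th component equals $\tfrac{a_n}{2}$ on both sides and the first $n-1$ components coincide.

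For~\ref{item:solution_path_ends} I would substitute $\lambda=0$ into $\mathcal P^+$, obtaining $\mathcal P^+(0)_i = \tfrac{a_i}{2}+\tfrac{a_i}{2}=a_i$ and hence $\mathcal P(0)=a$, and substitute $\lambda = \tfrac{a_n^2}{2}$ into the combined definition, obtaining $\mathcal P^-(0)$ whose components are $a_i$ for $i<n$ and $\tfrac{a_n}{2}-\tfrac{a_n}{2}=0$ for $i=n$, i.e.\ $\bar a$. For~\ref{item:solution_path_ordering} the main tool is that the map $t\mapsto \tfrac{t}{2}+\sqrt{\tfrac{t^2}{4}-c}$ is increasing on $\{t\ge 0: t^2\ge 4c\}$, since its derivative $\tfrac12 + \tfrac{t}{4\sqrt{t^2/4-c}}$ is positive there: on $\mathcal P^+$ this gives $\mathcal P^+(\lambda)_1\ge\dots\ge\mathcal P^+(\lambda)_n$ directly from $a_1\ge\dots\ge a_n$, and on $\mathcal P^-$ it gives the ordering of the first $n-1$ components, while $\mathcal P^-(\mu)_{n-1}\ge\tfrac{a_{n-1}}{2}\ge\tfrac{a_n}{2}\ge\mathcal P^-(\mu)_n$ closes the chain.

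Claim~\ref{item:solution_path_solutions} then follows from the quadratic observation: if $\product(\mathcal P(\lambda))=1$, put $p\colonequals\mathcal P(\lambda)$; then $\prod_i p_i=1\ne 0$ forces $p_i\ne 0$ for every $i$, so $p$ is invertible, and $p_i^2-a_ip_i+c=0$ for all $i$ is exactly $p-a+c\,p^{-1}=0$, so $p$ together with the multiplier $c$ solves the stationarity system~\eqref{eq:lagrange_vector_formulation}. For~\ref{item:solution_path_monotony} I would note that for $\lambda\le 0$ we are on $\mathcal P^+$ and $\product(\mathcal P(\lambda))=\prod_{i=1}^n g_i(\lambda)$ with $g_i(\lambda)\colonequals\tfrac{a_i}{2}+\sqrt{\tfrac{a_i^2}{4}-\lambda}$; each $g_i$ is positive (indeed $g_i(\lambda)\ge\sqrt{-\lambda}$ for $\lambda<0$) and strictly decreasing, because $g_i'(\lambda)=-\tfrac1{2\sqrt{a_i^2/4-\lambda}}<0$, so the product is strictly decreasing on $(-\infty,0)$ and, by continuity, on $(-\infty,0]$; the bound $g_i(\lambda)\ge\sqrt{-\lambda}$ finally yields $\product(\mathcal P(\lambda))\ge(-\lambda)^{n/2}\to\infty$ as $\lambda\to-\infty$.

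I do not expect a genuine obstacle here: every step is one-variable calculus or the ordering of the $a_i$. The only thing that requires care is the bookkeeping — keeping the reparametrization $\mu=\tfrac{a_n^2}{2}-\lambda$ straight, checking that the two branches really glue continuously, disposing of the degenerate case $a_n=0$ (where $\tfrac{a_n^2}{4}=0$, $a=\bar a$, and most statements collapse to triviality), and noting that the formula for $\mathcal P^-$ remains meaningful at $\mu=0$ so that the right endpoint $\mathcal P(\tfrac{a_n^2}{2})$ is well defined.
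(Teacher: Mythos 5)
Your proof is correct and follows essentially the same route as the paper's: continuity and endpoints by direct substitution (with the gluing at $\lambda = a_n^2/4$ checked explicitly), the ordering in~(A3) from monotonicity in $a_i$ of the map $t \mapsto \tfrac{t}{2}+\sqrt{\tfrac{t^2}{4}-c}$ plus the separate estimate for the $(n-1,n)$ pair on $\mathcal{P}^-$, stationarity in~(A4) from the quadratic each component satisfies, and~(A5) from the factors $\tfrac{a_i}{2}+\sqrt{\tfrac{a_i^2}{4}-\lambda}$ being positive, strictly decreasing, and bounded below by $\sqrt{-\lambda}$. One small gain in your write-up: you correctly identify the Lagrange multiplier in~(A4) as the constant $c$ of the quadratic, which equals $\mu=\tfrac{a_n^2}{2}-\lambda$ on the $\mathcal P^-$ branch rather than $\lambda$ itself; the paper's proof writes $\lambda$ throughout, which is literally accurate only on $\mathcal P^+$, though of course the conclusion that $\mathcal P(\lambda)$ is a stationary point still holds.
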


\begin{proof}
 Assertions~\ref{item:solution_path_continuity} and \ref{item:solution_path_ends}
 are easily checked.

 To show~\ref{item:solution_path_ordering}, consider first
 \begin{equation*}
  \mathcal{P}^+(\lambda)_i = \frac{a_i}{2} + \sqrt{\frac{a_i^2}{4} - \lambda}.
 \end{equation*}
 This expression is strictly monotone increasing in $a_i$, and therefore
 $a_i \ge a_j$ implies $\mathcal{P}^+(\lambda)_i \ge \mathcal{P}^+(\lambda)_j$.
 Hence $\mathcal{P}^+(\lambda) \in C_n$ for all $\lambda \in \big(-\infty,\frac{a_n^2}{4}\big]$.

 To show that also $\mathcal{P}^-(\lambda) \in C_n$ for all
 $\lambda \in [0,\frac{a_n^2}{4}]$, note that
 the first $n-1$ coefficients of $\mathcal{P}^-$ coincide with those of $\mathcal{P}^+$.
 Therefore, all we need to check is that $\mathcal{P}^-_{n-1} \ge \mathcal{P}_n^-$,
 which follows from
 \begin{equation*}
  \mathcal{P}^-(\lambda)_{n-1}
  =
  \frac{a_{n-1}}{2} + \sqrt{\frac{a^2_{n-1}}{4} - \lambda}
  \ge
  \frac{a_n}{2} + \sqrt{\frac{a^2_n}{4} - \lambda}
  \ge
  \frac{a_n}{2} - \sqrt{\frac{a^2_n}{4} - \lambda}
  =
  \mathcal{P}^-(\lambda)_n.
 \end{equation*}

 For~\ref{item:solution_path_solutions}, note that
 by construction the points $\mathcal{P}(\lambda)$ solve
 \begin{equation*}
  \mathcal{P}(\lambda)_i + \lambda \mathcal{P}(\lambda)^{-1}_i = a_i
  \qquad
  \forall i=1,\dots,n,
 \end{equation*}
 which is the first part of the stationarity condition~\eqref{eq:lagrange_vector_formulation}.
 If in addition $\product \big( \mathcal{P}(\lambda) \big) = 1$, then the second part also holds,
 and assertion~\ref{item:solution_path_solutions} follows.

 To prove~\ref{item:solution_path_monotony}, note that for any number $\alpha \ge 0$, the map
 \begin{equation*}
  z_\alpha(\lambda)
  \colonequals
  \frac{\alpha}{2} + \sqrt{\frac{\alpha^2}{4} - \lambda}
 \end{equation*}
 is defined for all $\lambda \le 0$, and it is
 strictly monotone decreasing.  Moreover, $\lim_{\lambda \to -\infty} z_\alpha(\lambda) = \infty$.
 Since also the functions $z_{a_i},\dots,z_{a_n}$ are nonnegative whenever they are defined,
 the map
 \begin{equation}
 \label{eq:determinant_as_function_of_lambda}
  \lambda
  \mapsto
  \prod_{i=1}^n z_{a_i}(\lambda)
  =
  \prod_{i=1}^n \Big(\frac{a_i}{2} + \sqrt{\frac{a_i^2}{4} - \lambda}\Big)
 \end{equation}
 is defined for all $\lambda \le 0$, is strictly monotone decreasing,
 and approaches $\infty$ as $\lambda \to - \infty$.
\end{proof}

The continuity of $\mathcal{P}$ allows to show existence of stationary points.
If $\product(a) <1$ then, by \ref{item:solution_path_monotony}, $\mathcal{P}$ intersects
$\sl(n)$ exactly once (which is not surprising, as this case
corresponds to the projection onto a strictly convex set).
If $\product(a) > 1$, then the restriction of $\mathcal{P}$ to $\big[0,\frac{a_n^2}{2}\big]$
is a continuous path in $C_n$ from~$a$ (with $\product(a) > 1$) to the point
$\bar{a} \coloneqq (a_1,\dots,a_{n-1},0)$ with $\product(\bar{a}) = 0$.
Therefore, there must be at least
one intersection of $\mathcal{P}$ with $\sl(n)$, i.e., at least
one stationary point of $\dist{a}$ in $C_n \cap \sl(n) \cap \R^n_{\ge 0}$.

Besides forming the basis of a numerical algorithm in Chapter~\ref{sec:direct_root_finding},
the solution path is also helpful to illustrate that there can be more than one
stationary point of $\dist{a}$ in $C_n$ if $n \ge 3$.
Figure~\ref{fig:solution-path-product} shows such a situation for $n=3$,
where the solution path intersections $\sl(3)$ three times in the order cone,
corresponding to three solutions of the stationarity equations~\ref{eq:lagrange_vector_formulation}.
It is unclear which of the stationary points (if any) is the global minimizer.

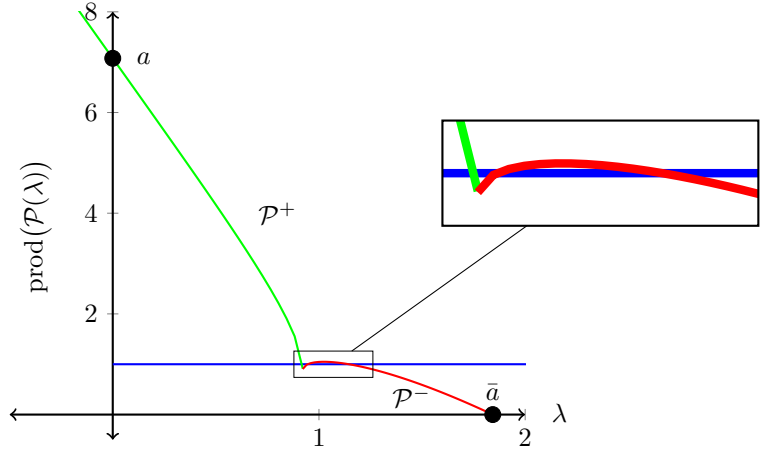
\begin{figure}
  \centering
  \begin{tikzpicture}[spy using outlines={magnification=4, size=4.2cm, height=1.4cm, connect spies}]

   \begin{axis}[
    axis x line=middle,
    axis y line=middle,
    axis line style={<->,color=black, thick},
    xlabel={$\lambda$},
    ylabel={$\textup{prod}\big(\mathcal{P}(\lambda)\big)$},
    x label style={at={(axis description cs:1.1,0.02)}},
    y label style={at={(axis description cs:0.1,.5)},rotate=90,anchor=south},
    xmin=-0.5,xmax=2,
    ymin=-0.5,ymax=8,
    xtick={0,1,...,3},
    ]

    \addplot [domain=0:4,samples=50,thick,color=blue] {1};

    \def \ax{1.92}
    \def \ay{1.9199}
    \def \az{1.9198}

    \addplot [domain=-1:{\az^2/4},samples=50,thick,color=green] {(\ax/2 + sqrt(\ax^2/4 - x)) * (\ay/2 + sqrt(\ay^2/4 - x)) * (\az/2 + sqrt(\az^2/4 - x))};
    \node at (0.8,4.0) {$\mathcal{P}^+$};

    \addplot [domain=0:{\az^2/4},samples=50,thick,color=red]({\az^2/2 - x}, {(\ax/2 + sqrt(\ax^2/4 - x)) * (\ay/2 + sqrt(\ay^2/4 - x)) * (\az/2 - sqrt(\az^2/4 - x))});
    \node at (1.45,0.35) {$\mathcal{P}^-$};

    \coordinate (aImg) at (0,\ax*\ay*\az);
    \draw[fill] (aImg) circle (3pt);
    \node at (0.15,\ax*\ay*\az) {$a$};

    \draw[fill] (\az^2/2,0.0) circle (3pt);
    \node at (\az^2/2,0.45) {$\bar a$};

    \path (1.07,1) coordinate (spyCenter);

   \end{axis}

   \spy [black] on (spyCenter) in node [above right] at ([xshift=-1.1cm]current axis.east);

  \end{tikzpicture}
  \caption{The expression $\product\big(\mathcal{P}(\lambda)\big)$ as a function of $\lambda$,
  for \(a=(1.92, 1.9199, 1.9198)^T\)}
  \label{fig:solution-path-product}
\end{figure}

\section{Coordinate transformations}
\label{sec:coordinate_transformations}

One aspect that makes solving the projection problem onto $\sl(n) \cap \R^n_{\ge 0}$ difficult
is the nonlinearity of the admissible set. In this chapter we therefore explore
coordinate transformations that simplify this set.

\subsection{Logarithmic coordinates}
\label{sec:logarithmic_coordinates}

As a first step, we transform the problem such that the admissible set becomes
a vector space.  For this, we make the substitution
\begin{equation*}
 \xi_i \colonequals \log x_i,
 \qquad
 i = 1,\dots,n.
\end{equation*}
This induces a diffeomorphism between the sets $\R^n_{>0}$ and $\R^n$.
In the new coordinates, the constraint $\product(x) = 1$ transforms to the linear condition
\begin{equation}
\label{eq:linearized_constraint}
 \sum_{i=1}^n \xi_i
 =
 \log \prod_{i=1}^n x_i
 =
 \log 1
 =
 0.
\end{equation}
In the following, we will call this admissible set
\begin{equation*}
 V
 \colonequals
 \Big\{ \xi \in \R^n \; : \; \sum_{i=1}^n \xi_i = 0 \Big\}.
\end{equation*}
Note that it is parallel to the tangent space $T_{\mathbbm{1}} \sl(n)$ at
the point $\mathbbm{1} \colonequals (1,\dots,1)^T$.

Incidentally, the logarithmic transformation also preserves the order cone.
This follows from the strict monotonicity of the logarithm,
together with the fact that $\log$ is a diffeomorphism from $\R_{>0}$ to $\R$.

\begin{lemma}
 The transformation $\xi = \log x$ (componentwise) is a diffeomorphism between the strictly positive
 order cone $C_n \cap \R_{>0}^n$ and $C_n$ itself.
\end{lemma}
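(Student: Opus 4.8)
The statement to prove is that the componentwise logarithm $\xi = \log x$ is a diffeomorphism between the strictly positive order cone $C_n \cap \R_{>0}^n$ and the order cone $C_n$ itself. The plan is to split the claim into three routine parts: (i) the componentwise logarithm is a diffeomorphism from $\R_{>0}^n$ onto $\R^n$; (ii) it maps $C_n \cap \R_{>0}^n$ into $C_n$; and (iii) its inverse, the componentwise exponential, maps $C_n$ into $C_n \cap \R_{>0}^n$. Together (ii) and (iii) show that the restriction is a bijection between the two sets, and since restricting a diffeomorphism to an open subset (in the subspace topology) and its image yields a diffeomorphism, we are done. Strictly speaking one should note that $C_n \cap \R_{>0}^n$ and $C_n$ are not open in $\R^n$, so the statement is about diffeomorphism of manifolds-with-corners, or more simply: the map and its inverse are smooth as restrictions of the globally smooth maps $\log$ and $\exp$.

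For part (i), this is the standard fact that $t \mapsto \log t$ is a smooth bijection $\R_{>0} \to \R$ with smooth inverse $s \mapsto e^s$ (derivative $1/t$ never vanishes), applied componentwise; I would simply cite it. For part (ii), let $x \in C_n \cap \R_{>0}^n$, so $x_1 \ge x_2 \ge \dots \ge x_n > 0$. Strict monotonicity of $\log$ on $\R_{>0}$ gives $\log x_1 \ge \log x_2 \ge \dots \ge \log x_n$, i.e.\ $\xi \in C_n$. For part (iii), let $\xi \in C_n$, so $\xi_1 \ge \dots \ge \xi_n$ (no positivity needed). Strict monotonicity of $\exp$ on $\R$ gives $e^{\xi_1} \ge \dots \ge e^{\xi_n} > 0$, i.e.\ $\exp(\xi) \in C_n \cap \R_{>0}^n$. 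Hence $\log$ and $\exp$ are mutually inverse bijections between the two sets, each the restriction of a smooth map, which is exactly the claim.

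There is essentially no obstacle here; the lemma is a direct consequence of the monotonicity of $\log$ and $\exp$, and the only thing worth a sentence is the observation — already made in the surrounding text — that the implication $x_i \ge x_j \Rightarrow \log x_i \ge \log x_j$ (and its converse via $\exp$) is what preserves the ordering defining $C_n$. If one wants to be careful about the word ``diffeomorphism'' for sets that are not open, I would add a remark that $C_n \cap \R_{>0}^n$ carries the natural structure of a relatively open subset of the closed cone $C_n$ inside $\R^n$, and that both $\log$ and $\exp$ are smooth on all of $\R_{>0}^n$ resp.\ $\R^n$, so their restrictions are automatically smooth in the appropriate sense. No computation beyond this is needed.
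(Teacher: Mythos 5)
Your proposal is correct and takes exactly the same route as the paper, which justifies this lemma in a single sentence immediately before stating it (strict monotonicity of $\log$ to preserve the ordering, plus the fact that $\log$ is a diffeomorphism $\R_{>0}\to\R$ applied componentwise). You have simply spelled out the bijection in both directions and added a careful remark about what ``diffeomorphism'' means for sets that are not open, which is a reasonable elaboration of the paper's terse argument rather than a different approach.
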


We will sometimes write $C_n^\textup{log}$ instead of $C_n$ to emphasize the fact
that logarithmic coordinates are used.

The cone $C_n^\textup{log}$ is still polyhedral, but it is
not salient anymore, because it contains the linear space
\begin{equation*}
 \operatorname{span}(b_n)
 =
 \operatorname{span}(\mathbbm{1})
 =
 \operatorname{span}\big((1,\dots,1)^T\big).
\end{equation*}
Recalling definition~\eqref{eq:order_cone_spanning_directions}
of the spanning directions $b_1,\dots,b_n$, the cone can therefore be written as
\begin{equation}
\label{eq:logarithmic_cone}
 C_n^\textup{log}
 =
 \cone(b_1,\dots,b_{n-1})
 +
 \operatorname{span}(\mathbbm{1}).
\end{equation}

\subsection{The squared Euclidean distance in logarithmic coordinates}
\label{sec:logarithmic_distance}

Of course, linearizing the admissible set $\sl(n)$
has its price---the functional to be minimized ceases to be quadratic.

In logarithmic coordinates, the squared distance functional is
\begin{align}
\label{eq:distance_in_log_coordinates}
 \distlog{a}  : \R^n \to \R
 \qquad
 \distlog{a}(\xi)
 \colonequals
 \frac{1}{2} \sum_{i=1}^n \big( \exp \xi_i - a_i \big)^2.
\end{align}
It is defined on the entire space $\R^n$, and like the Cartesian squared distance $\dist{a}$
it is bounded from below by zero. In contrast to $\dist{a}$, however,
$\distlog{a}$ is not coercive on $\R^n$, because
$\distlog{a}(\xi^k)$ converges to $\frac{1}{2} \sum_{i=1}^n a_i^2 < \infty$
for any sequence $(\xi^k)$ converging to $(-\infty, \dots, -\infty)$.
Fortunately, all that matters is coercivity on the
admissible set.

\begin{lemma}
 The functional $\distlog{a}$ is coercive on $V$, i.e., for each sequence $(\xi^k)$ in $V$
 with $\norm{\xi^k} \to \infty$ we get $\distlog{a}(\xi^k) \to \infty$.
\end{lemma}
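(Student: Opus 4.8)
The plan is to exploit the constraint $\sum_{i=1}^n \xi_i = 0$ to control the behaviour of at least one component. Suppose $(\xi^k)$ is a sequence in $V$ with $\norm{\xi^k} \to \infty$. Since all components sum to zero, the largest component cannot run off to $-\infty$: if every component were bounded above by some $M$ uniformly in $k$, the constraint would force the smallest component to stay bounded below by $-(n-1)M$, contradicting $\norm{\xi^k} \to \infty$ unless some component is unbounded above. More precisely, because $\sum_i \xi_i^k = 0$ we always have $\max_i \xi_i^k \ge 0$, and since $\norm{\xi^k}_\infty \to \infty$ (equivalence of norms in $\R^n$), for a subsequence there is an index $j$ (which we may assume fixed by passing to a further subsequence and relabelling) with $\xi_j^k \to +\infty$.

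First I would extract such a subsequence and index $j$. Then $\exp \xi_j^k \to \infty$, so the single term $\frac{1}{2}(\exp \xi_j^k - a_j)^2$ in the sum defining $\distlog{a}(\xi^k)$ already tends to $\infty$. Since every term in $\distlog{a}$ is nonnegative, this forces $\distlog{a}(\xi^k) \to \infty$ along the subsequence. Finally I would upgrade this to the full sequence by the standard subsequence argument: if $\distlog{a}(\xi^k)$ did not converge to $\infty$, there would be a subsequence along which it stays bounded; but the argument above applied to that subsequence (which still has norm tending to infinity) produces a further subsequence on which $\distlog{a} \to \infty$, a contradiction.

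The only slightly delicate point is the logical structure of the subsequence extraction — one must be careful that "the index of the largest component" can in principle change with $k$, which is why passing to a subsequence on which a single index $j$ works (using finiteness of $\{1,\dots,n\}$ by the pigeonhole principle) is the right move; after that everything is immediate from nonnegativity of the summands. There is no real computation involved, so I expect no serious obstacle beyond writing this bookkeeping cleanly.
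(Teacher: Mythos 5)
Your proof is correct and follows essentially the same strategy as the paper: use the constraint $\sum_i \xi^k_i = 0$ to produce a component of $\xi^k$ tending to $+\infty$, then bound $\distlog{a}(\xi^k)$ from below by that single nonnegative summand. The paper sidesteps your subsequence bookkeeping by observing the quantitative bound $\max_i \xi^k_i \ge \frac{1}{n-1}\norm{\xi^k}_\infty$, which yields the limit directly without passing to subsequences.
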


\begin{proof}
 We show the assertion for the norm $\norm{\xi}_{\infty} \colonequals \max_{i=1,\dots,n} \abs{\xi_i}$.
 Let $\xi^k$ be a sequence in~$V$ with $\norm{\xi^k}_{\infty} \to \infty$. Then for each $k$
 there is an index $i_k \in \{1,\dots,n\}$ such that $\norm{\xi^k}_{\infty} = \abs{\xi^k_{i_k}}$.
 Since $\xi_1^k + \dots + \xi_n^k = 0$, there is then also a $j_k \in \{1,\dots,n\}$
 with $\xi^k_{j_k} \ge \frac{1}{n-1} \abs{\xi^k_{i_k}}$.
 Then
 \begin{equation*}
  \distlog{a}(\xi^k)
  \ge
  \frac{1}{2} \big(\exp \xi^k_{j_k} - a_{j_k} \big)^2
 \end{equation*}
 because each addend of~\eqref{eq:distance_in_log_coordinates} is non-negative, and
 \begin{equation*}
  \big(\exp \xi^k_{j_k} - a_{j_k} \big)^2
  \ge
  \Big( \exp \frac{\abs{\xi^k_{i_k}}}{n-1} - a_{j_k} \Big)^2
 \end{equation*}
 for $k$ large enough,
 because the scalar map $\tau \mapsto (\exp \tau - \text{const})^2$ is monotone increasing for all $\exp \tau > \text{const}$.
 But $\norm{\xi^k}_{\infty} \to \infty$ is the same as $\abs{\xi_{i_k}} \to \infty$,
 and therefore this implies $\distlog{a}(\xi^k) \to \infty$.
\end{proof}

Before we continue, we briefly introduce the function
\begin{equation*}
 E : \R^n \supset V \to \R,
 \qquad
 E(\xi) \colonequals \sum_{i=1}^n \exp \xi_i,
\end{equation*}
which will appear several times in the second half of the paper.
Because of $E(\xi) = \langle \exp \xi, \mathbbm{1} \rangle$, the function can be interpreted
as the vertical distance between $V$ and the image of the Lie exponential map
$\exp \xi \subset \sl(n)$. At the same time, $E$ is one possible way to generalize
the hyperbolic cosine function to higher dimensions.

\begin{lemma}
\label{lem:hyperbolic_cosine_is_convex}
 The functional $E$ restricted to $V$ is strictly convex, coercive, and has a
 unique minimizer at $\xi=0$.
\end{lemma}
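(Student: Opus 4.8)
The plan is to verify the three claimed properties of $E|_V$ in turn, with strict convexity being the workhorse from which the rest follows. First I would establish strict convexity. The function $E(\xi) = \sum_{i=1}^n \exp \xi_i$ has Hessian $\operatorname{diag}(\exp \xi_1, \dots, \exp \xi_n)$ on all of $\R^n$, which is positive definite everywhere. Restricting a function with positive definite Hessian to an affine subspace $V$ yields a function whose Hessian (with respect to any linear parametrization of $V$) is the compression of a positive definite matrix to a subspace, hence again positive definite. So $E|_V$ is strictly convex. Alternatively, and perhaps more cleanly for the writeup, one can invoke strict convexity of each scalar map $t \mapsto \exp t$ directly: for $\xi \neq \eta$ in $V$ and $t \in (0,1)$, there is at least one index $i$ with $\xi_i \neq \eta_i$, and for that index $\exp(t\xi_i + (1-t)\eta_i) < t\exp\xi_i + (1-t)\exp\eta_i$ strictly, while the other indices give $\le$; summing yields $E(t\xi + (1-t)\eta) < tE(\xi) + (1-t)E(\eta)$.

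Next I would handle coercivity on $V$. This is essentially the same argument as in the preceding coercivity lemma for $\distlog{a}$, only easier. Given a sequence $(\xi^k)$ in $V$ with $\norm{\xi^k}_\infty \to \infty$, pick $i_k$ realizing the max; since the coordinates sum to zero, some coordinate $j_k$ satisfies $\xi^k_{j_k} \ge \frac{1}{n-1}\abs{\xi^k_{i_k}}$, and then $E(\xi^k) \ge \exp \xi^k_{j_k} \ge \exp\!\big(\tfrac{1}{n-1}\abs{\xi^k_{i_k}}\big) \to \infty$ because all $n$ summands of $E$ are positive. Equivalence of norms on $\R^n$ then gives coercivity with respect to any norm.

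Finally, existence and uniqueness of the minimizer, and its location. Strict convexity plus coercivity of $E|_V$ (a continuous function on the closed set $V$) guarantees a unique minimizer $\xi^*$. To identify it as $\xi^* = 0$, I would use the first-order optimality condition: $\xi^*$ minimizes $E$ over the subspace $V$ iff $\nabla E(\xi^*) \perp V$, i.e. $\nabla E(\xi^*)$ is parallel to the normal direction $\mathbbm{1}$ of $V$ (recall $V = \{\xi : \langle \mathbbm{1}, \xi\rangle = 0\}$). Since $\nabla E(\xi) = (\exp\xi_1,\dots,\exp\xi_n)^T$, this means all components $\exp\xi^*_i$ are equal, hence all $\xi^*_i$ are equal; combined with $\sum_i \xi^*_i = 0$ this forces $\xi^* = 0$. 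A quick check confirms $0 \in V$ and that $\nabla E(0) = \mathbbm{1}$ is indeed normal to $V$, so $0$ is the minimizer. I expect no real obstacle here; the only mild subtlety is being careful that the Hessian-restriction argument for strict convexity is phrased in terms of a parametrization of $V$ rather than the ambient Hessian, which is why the direct scalar-convexity argument may be preferable for exposition.
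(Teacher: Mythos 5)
Your proposal is correct but follows a genuinely different route from the paper on two of the three claims. For strict convexity you use the same positive-definite-Hessian argument (and offer a clean alternative via scalar strict convexity of $\exp$). Where you diverge: for coercivity, the paper first observes that $E(\xi)\ge n=E(0)$ on $V$ via the elementary inequality $\exp t\ge 1+t$, then restricts $E$ to the unit sphere $S$ in $V$, gets $E_{\min,S}>n$ by compactness, and invokes convexity to obtain affine growth along every ray through~$0$; you instead estimate directly, exactly as in the paper's earlier coercivity lemma for $\distlog{a}$: some coordinate of $\xi\in V$ must be at least $\frac{1}{n-1}\norm{\xi}_\infty$, so $E(\xi)\ge\exp(\frac{1}{n-1}\norm{\xi}_\infty)\to\infty$. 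For locating the minimizer, the paper gets $\xi^*=0$ for free from $\exp t\ge 1+t$, whereas you first derive existence and uniqueness from coercivity plus strict convexity and then identify the minimizer through the first-order condition $\nabla E(\xi^*)\parallel\mathbbm{1}$. Both orderings are sound. The paper's $\exp t\ge 1+t$ trick is slightly more economical (it does double duty, feeding both the minimizer identification and the $E_{\min,S}>n$ step), while your coercivity estimate is self-contained and does not depend on first knowing that $0$ is a minimizer, and your first-order-condition argument makes the Lagrangian structure explicit, which ties in nicely with the rest of the paper's viewpoint.
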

\begin{proof}
 The functional is strictly convex, because its Hesse matrix
 \begin{equation*}
  \nabla^2 E(\xi) = \diag \big( \exp \xi_1, \dots, \exp \xi_n \big)
 \end{equation*}
 is positive definite everywhere, in particular on $V$.
 Restricted to $V$, it has a minimizer at $\xi=0$, because $E(0) = n$, but at the
 same time for all $\xi \in V$ we have
 \begin{equation*}
  E(\xi) = \sum_{i=1}^n \exp \xi_i \ge \sum_{i=1}^n (\xi_i + 1) = n.
 \end{equation*}

 To show coercivity of $E$ on $V$, let $S$ be the unit sphere in $V$ around $0$.
 Since this sphere is compact and $E$ is continuous, $E$ assumes its minimum
 $E_{\text{min},S}$ on $S$, and this minimum is strictly larger than $n$.
 Then, by convexity, along every ray through~$0$, $E$ grows at least affinely,
 with slope $E_{\text{min},S} - n$. This implies coercivity.
 Coercivity together with strict convexity implies that the minimizer
 at $\xi = 0$ is unique.
\end{proof}

\subsection{Hyperbolic coordinates}

So far, we have replaced the nonlinear admissible set $\sl(n)$ by the vector space~$V$.
We now do a further transformation that eliminates the constraint
completely. For this we first note that because of
$\sum_{i=1}^n \xi_i = \langle \xi, \mathbbm{1}\rangle$ for every admissible~$\xi$
in logarithmic coordinates,
the admissible set $V$ in such coordinates
is the orthogonal complement of the linear subspace $\operatorname{span}(\mathbbm{1})$
of $C_n^\textup{log}$. We can eliminate
the constraint if we can construct a basis for this complement.
At the same time, we want to retain a simple description of the order cone
with respect to that new basis, because we know that this cone contains at least
one global minimizer of $\dist{a}$.  This is indeed possible.  In fact,
in the new hyperbolic coordinates, the restriction of the order cone $C^{\textup{log}}_n$
to the admissible set~$V$ turns into the positive orthant of $\R^{n-1}$!
We achieve this
by modifying the set of vectors $b_1,\dots,b_{n-1}$ that span the salient part of $C_n^\textup{log}$,
because the representation of $C_n^\textup{log}$ given in~\eqref{eq:logarithmic_cone}
is not unique.  Indeed, since the cone contains the linear space $\operatorname{span}(\mathbbm{1})$
we can add multiples of $\mathbbm{1}$
to any of the other spanning vectors without changing the cone, i.e.,
\begin{equation*}
 C_n^\textup{log}
 =
 \cone(b_1 + \eta_1 \mathbbm{1},\dots,b_{n-1}+\eta_{n-1}\mathbbm{1})
 +
 \operatorname{span}(\mathbbm{1})
\end{equation*}
for any $\eta_1,\dots,\eta_{n-1} \in \R$.
We use this freedom to construct a new spanning set $\tilde{b}_i \colonequals b_i + \eta_i \mathbbm{1}$,
$i=1, \dots, n-1$ such that $\tilde{b}_i \in V$.  These $\tilde{b}_i$ then form a basis
of~$V$.

\begin{lemma}
 For every $i=1,\dots,n-1$, the vector
 $\tilde{b}_i \colonequals b_i + \eta_i \mathbbm{1}$ is in $V$ if $\eta_i = -i$.
\end{lemma}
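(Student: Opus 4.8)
The plan is to verify membership in $V$ directly by computing the sum of the components of $\tilde b_i$, since $V$ is exactly the set of vectors whose components sum to zero. Recall from~\eqref{eq:order_cone_spanning_directions} that for $i < n$ the vector $b_i$ has its first $i$ components equal to $n$ and its remaining $n-i$ components equal to $0$, so the sum of its components is $in$. The vector $\mathbbm{1}$ has component sum $n$. Hence
\begin{equation*}
 \sum_{j=1}^n (\tilde b_i)_j
 =
 \sum_{j=1}^n (b_i)_j + \eta_i \sum_{j=1}^n \mathbbm{1}_j
 =
 in + \eta_i n
 =
 n(i + \eta_i).
\end{equation*}
This quantity vanishes if and only if $i + \eta_i = 0$, i.e.\ $\eta_i = -i$, in which case $\tilde b_i \in V$ by the definition of $V$.

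There is no real obstacle here: the statement is a one-line arithmetic check on the explicit spanning vectors. The only thing worth a sentence of commentary afterwards (which the authors presumably give) is the observation that, since $\tilde b_1,\dots,\tilde b_{n-1}$ are obtained from the linearly independent vectors $b_1,\dots,b_{n-1}$ by adding multiples of $\mathbbm{1} = b_n$, they remain linearly independent; being $n-1$ independent vectors lying in the $(n-1)$-dimensional space $V$, they form a basis of $V$, which is what makes them useful for eliminating the constraint.
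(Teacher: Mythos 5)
Your computation is exactly the paper's: the authors also evaluate $\langle b_i + \eta_i\mathbbm{1}, \mathbbm{1}\rangle = in + \eta_i n$ and conclude $\eta_i = -i$, and summing components is the same as taking the inner product with $\mathbbm{1}$. The proposal is correct and takes essentially the same approach.
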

\begin{proof}
By definition of $V$, the inclusion $\tilde{b}_i \in V$ is equivalent to
$\langle \tilde{b}_i, \mathbbm{1} \rangle = 0$.
With the ansatz $\tilde{b}_i = b_i + \eta_i \mathbbm{1}$ we see that for any $i=1, \dots, n-1$
\begin{align*}
 \big \langle b_i + \eta_i \mathbbm{1}, \mathbbm{1} \big\rangle
 & =
 \Big\langle (\underbrace{n,\dots,n}_{\text{$i$ many}}, 0, \dots, 0)^T + \eta_i \mathbbm{1}, \mathbbm{1} \Big\rangle
 =
 in + \eta_i n.
\end{align*}
Hence $\langle \tilde{b}_i, \mathbbm{1} \rangle = 0$ if and only if
\begin{equation*}
 \eta_i = -i.
 \qedhere
\end{equation*}
\end{proof}

From now on the symbols $\tilde{b}_i$ will always represent this choice of $\eta_i$, i.e.,
\begin{equation*}
 \tilde{b}_i
 \colonequals
 \begin{cases}
  b_i - i\mathbbm{1} & \text{if $i<n$}, \\
  \mathbbm{1}        & \text{if $i=n$}.
 \end{cases}
\end{equation*}
Any point $\xi$ in the order cone $C_n^\textup{log}$ can then be represented as
\begin{equation}
\label{eq:hyperbolic_coordinates_representation}
 \xi
 =
 \sum_{i=1}^n \zeta_i \tilde{b}_i
 =
 \sum_{i=1}^{n-1} \zeta_i \tilde{b}_i + \zeta_n \mathbbm{1},
\end{equation}
with $\zeta_1,\dots,\zeta_{n-1} \ge 0$ und $\zeta_n \in \R$.
We call the numbers $\zeta_1,\dots,\zeta_n$ \emph{hyperbolic coordinates}.

\begin{remark}[The two-dimensional case]
\label{rem:2d_hyperbolic_coordinates}
 If $n=2$ we obtain
 \begin{equation*}
  \tilde{b}_1
  =
  \begin{pmatrix} 1 \\ -1 \end{pmatrix}
  \qquad \text{and} \qquad
  \tilde{b}_2
  =
  \begin{pmatrix} 1 \\ 1 \end{pmatrix}.
 \end{equation*}
 That is, a point $\zeta = (\zeta_1, \zeta_2)$ in the coordinates~\eqref{eq:hyperbolic_coordinates_representation}
 corresponds to
 \begin{equation*}
  \begin{pmatrix} x_1 \\ x_2 \end{pmatrix}
  =
  \begin{pmatrix} \exp \xi_1 \\ \exp \xi_2 \end{pmatrix}
  =
  \begin{pmatrix}
   \exp(\zeta_1 + \zeta_2) \\
   \exp(-\zeta_1 + \zeta_2)
  \end{pmatrix}
  =
  \exp \zeta_2
  \begin{pmatrix}
   \exp \zeta_1 \\
   \exp (-\zeta_1)
  \end{pmatrix}
 \end{equation*}
 in Cartesian coordinates. The numbers
 \begin{alignat}{2}
  \label{eq:2d_geometric_mean}
  \exp \zeta_2
  & =
  \sqrt{x_1 x_2}
  & \qquad&
  \text{(the geometric mean)}
  \shortintertext{and}
  \nonumber
  \zeta_1
  & =
  \log \sqrt{\frac{x_1}{x_2}}
  &&
  \text{(the hyperbolic angle)}
 \end{alignat}
 form coordinates of the positive quadrant, sometimes called hyperbolic coordinates.
 This justifies calling the coordinates $\zeta_1,\dots,\zeta_n$
 of the positive $n$-dimensional orthant defined by~\eqref{eq:hyperbolic_coordinates_representation} hyperbolic as well.
\end{remark}

For later reference we also need the inverse to~\eqref{eq:hyperbolic_coordinates_representation}.
Denote by $B \colonequals (\tilde{b}_1 \; | \; \tilde{b}_2 \; | \dots | \; \tilde{b}_n)$
the matrix of spanning vectors
\begin{align}
\label{eq:hyperbolic_transformation_matrix}
 B
 & =
 \begin{pmatrix}
  n-1 & n-2 & n-3 & \dots & 1 \\
   -1 & n-2 & n-3 & \dots & 1 \\
   -1 &  -2 & n-3 & \dots & 1 \\
    \vdots  & \vdots        & \vdots        &       & \vdots \\
  -1 &  -2 & -3 & \dots & 1
 \end{pmatrix},
\end{align}
such that~\eqref{eq:hyperbolic_coordinates_representation} can be written as $\xi = B\zeta$.
The following assertion can be derived with the Sherman--Morrison formula,
using the representation of $B$ as a rank-one update
\begin{align*}
 B
 & =
 (b_1 \; | \; b_2 \; | \dots | \; b_{n-1} \; | \; b_n)
 + \begin{pmatrix} 1 \\ \vdots \\ 1 \end{pmatrix}
 \big(-1 \; \; -2 \; \dots \; -(n-1) \; \; 0\big).
\end{align*}

\begin{lemma}
\label{lem:inverse_hyperbolic_coordinates}
 The matrix $B$ is invertible and
 \begin{equation*}
  B^{-1}
  =
  \frac{1}{n}
  \begin{pmatrix}
   1 & -1 &  0 & \dots & 0 \\
   0 &  1 & -1 & \ddots     & \vdots \\
   \vdots & \ddots & \ddots & \ddots & 0 \\
   0 & \dots  &     0 & 1 & -1 \\
   1 & 1 & \dots & 1 & 1
  \end{pmatrix}.
 \end{equation*}
\end{lemma}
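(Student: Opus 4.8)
I would prove this via the Sherman--Morrison formula applied to the rank-one update displayed just above the statement, writing $B = B_0 + \mathbbm{1} v^T$ with $B_0 \colonequals (b_1 \,|\, \dots \,|\, b_n)$ the matrix of the \emph{original} spanning vectors from~\eqref{eq:order_cone_spanning_directions} and $v \colonequals (-1, -2, \dots, -(n-1), 0)^T$. The first step is to observe that $B_0$ is upper triangular: for $i < n$ its $i$-th column carries the value $n$ in the first $i$ positions and $0$ afterwards, while its last column is $\mathbbm{1}$; hence $\det B_0 = n^{n-1} \neq 0$, so $B_0$ is invertible. Reading off the columns of $B_0^{-1}$ is then a short back-substitution, or simply the observations $B_0\big(\tfrac1n(e_k - e_{k-1})\big) = \tfrac1n(b_k - b_{k-1}) = e_k$ for $k<n$ (with $e_0 \colonequals 0$) and $B_0 e_n = b_n = \mathbbm{1}$, which give that the columns of $B_0^{-1}$ are $\tfrac1n e_1,\ \tfrac1n(e_2-e_1),\ \dots,\ \tfrac1n(e_{n-1}-e_{n-2})$, together with $e_n - \tfrac1n e_{n-1}$.

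Next I would assemble the pieces of the Sherman--Morrison identity $(B_0 + \mathbbm{1} v^T)^{-1} = B_0^{-1} - \big(1 + v^T B_0^{-1}\mathbbm{1}\big)^{-1} B_0^{-1}\mathbbm{1}\, v^T B_0^{-1}$. Since $B_0 e_n = \mathbbm{1}$, we get $B_0^{-1}\mathbbm{1} = e_n$ with no computation, whence $1 + v^T B_0^{-1}\mathbbm{1} = 1 + v_n = 1 \neq 0$; in particular the formula applies and the scalar denominator is trivial. Dotting $v$ against the explicit columns of $B_0^{-1}$ found above yields $v^T B_0^{-1} = \tfrac1n(-1,-1,\dots,-1,\,n-1)$.

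Finally I would substitute: the correction term $-B_0^{-1}\mathbbm{1}\,(v^T B_0^{-1}) = -e_n\cdot\tfrac1n(-1,\dots,-1,n-1)$ is the matrix that vanishes except in its last row, which equals $\tfrac1n(1,\dots,1,\,-(n-1))$. Adding it to $B_0^{-1}$ leaves the first $n-1$ rows of $B_0^{-1}$ untouched --- these are exactly the rows $\tfrac1n(e_j^T - e_{j+1}^T)$, i.e.\ the bidiagonal band in the claimed matrix --- and turns the last row $(0,\dots,0,1)$ of $B_0^{-1}$ into $\tfrac1n(1,1,\dots,1)$, which is precisely the asserted $B^{-1}$. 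I do not expect a genuine obstacle here: the statement is a finite linear-algebra identity, and the only thing requiring a bit of care is tracking the telescoping pattern of $B_0^{-1}$ and of $v^T B_0^{-1}$. As an entirely independent sanity check one may instead just multiply out $B B^{-1}$ and verify that it equals $\identity$ by a short case analysis on the row and column indices.
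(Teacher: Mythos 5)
Your proposal is correct and follows exactly the route the paper gestures at: it applies the Sherman--Morrison formula to the rank-one decomposition $B = B_0 + \mathbbm{1} v^T$ displayed just before the lemma, with $B_0^{-1}$ read off from the upper-triangular structure of $B_0$. All the intermediate computations ($B_0^{-1}\mathbbm{1} = e_n$, $1 + v^T B_0^{-1}\mathbbm{1} = 1$, $v^T B_0^{-1} = \tfrac1n(-1,\dots,-1,n-1)$, and the resulting correction affecting only the last row) check out.
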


Hence, the hyperbolic coordinates~$\zeta$ of a point $\xi$ in logarithmic
coordinates are given by $\zeta = B^{-1}\xi$.
As a direct corollary we obtain that
\begin{equation}
\label{eq:constraint_as_dof}
 \zeta_n = \frac{1}{n} \sum_{i=1}^n \xi_i.
\end{equation}
This implies that $B^{-1}$ bijectively maps the admissible set~$V$ onto
$\R^{n-1} \times \{ 0 \}$.  We have effectively eliminated the equality constraint.

\begin{remark}
 Numerical tests suggest that the condition number of \(B\)
 with respect to the spectral norm is \(\approx \frac{1}{3} n^{\frac{3}{2}}\).
%
%
%
%
%
\end{remark}

\subsection{The squared Euclidean distance in hyperbolic coordinates}
\label{sec:hyperbolic_distance}

We continue our investigation of the squared distance functional in non-Cartesian coordinates.
Recall Definition~\eqref{eq:distance_in_log_coordinates} of the functional
$\distlog{a}$ in logarithmic coordinates.

To transform $\distlog{a}$ to hyperbolic coordinates~$\zeta$
we use the relationship $\xi = B\zeta$,
with the matrix $B$ defined in~\eqref{eq:hyperbolic_transformation_matrix}.
Logarithmic-coordinate components $\xi_i$ can be expressed as
\begin{equation*}
 \xi_i = b_{i*} \zeta,
\end{equation*}
where $b_{i*}$ is the $i$-th row of the matrix $B$.
Since every row of $B$ ends with a~1 we can rewrite this as
\begin{equation}
\label{eq:hyperbolic_components}
 \xi_i = \bar{b}_{i*} \bar{\zeta} + \zeta_n,
\end{equation}
where we have used bars to denote the vectors of the first $n-1$ entries
of vectors in $\R^n$.
Inserting \eqref{eq:hyperbolic_components} into~\eqref{eq:distance_in_log_coordinates} leads to
\begin{align}
\label{eq:distance_in_hyp_coordinates}
 \disthyp{a}(\zeta)
 & \colonequals
 \frac{1}{2} \sum_{i=1}^n \big(\exp(\bar{b}_{i*} \bar{\zeta} + \zeta_n)
  - a_i
 \big)^2,
\end{align}
but the term $+\zeta_n$ can be omitted because $\disthyp{a}$ is to be
minimized in the space $\R^{n-1} \times \{ 0 \} = \big\{ \zeta \in \R^n\; :\; \zeta_n = 0 \big\}$ only.

The squared distance functional~\eqref{eq:distance_in_hyp_coordinates}
is still smooth and bounded from below. It is coercive
on $\R^{n-1} \times \{ 0 \}$ because the coordinate transformation $B$
between logarithmic and hyperbolic coordinates is an invertible linear
map between the two finite-dimensional spaces $V$ and $\R^{n-1} \times \{ 0 \}$.

However, looking at simple examples like the ones in Figures~\ref{fig:da_hyp_2d}
and~\ref{fig:da_hyp_3d} immediately shows that $\disthyp{a}$ is not necessarily convex anymore
if $\product(a) > 1$.  This is not surprising, as we know from Chapter~\ref{sec:nonuniqueness}
that there can be more than one minimizer even in the order cone.
We have a somewhat weaker structure, though: $\disthyp{a}$ is the difference
of two convex functions.%
\footnote{Knowing this it is possible to minimize $\disthyp{a}$ using
algorithms from the field of difference-of-convex programming~\cite{deoliveira:2020}.
This direction is not further pursued here, though.
}
To see this, introduce hyperbolic coordinates $\omega = (\bar{\omega}, \omega_n)$ for $a$ as well, i.e., set
\begin{equation*}
 a_i = \exp (\bar{b}_{i*} \bar{\omega} + \omega_n).
\end{equation*}
Inserting this into~\eqref{eq:distance_in_hyp_coordinates}, multiplying out the squares,
and fixing $\zeta_n = 0$ leads to
\begin{align*}
 \disthyp{\omega} : \R^{n-1} \to \R
 \qquad
 \disthyp{\omega}(\bar{\zeta})
 \colonequals
 \frac{1}{2} \sum_{i=1}^n \exp(2 \bar{b}_{i*} \bar{\zeta})
   - \exp \omega_n \sum_{i=1}^n \exp (\bar{b}_{i*} ( \bar{\omega} + \bar{\zeta}))
\end{align*}
plus an irrelevant constant term.
Introducing the reduced matrix $\bar{B} \in \R^{n \times (n-1)}$
whose rows are the vectors $\bar{b}_{i*}$, we can write this more concisely as
\begin{equation}
\label{eq:disthyp_reduced}
 \disthyp{\omega}(\bar{\zeta})
 =
 \frac{1}{2} E(2\bar{B} \bar{\zeta}) - \exp \omega_n E\big(\bar{B}(\bar{\omega} + \bar{\zeta})\big) + \text{const},
\end{equation}
where $E(x) \colonequals \sum_{i=1}^n \exp x_i$
is the map already seen in Chapter~\ref{sec:logarithmic_distance}.
This really is the difference between two convex functionals,
because from Lemma~\ref{lem:hyperbolic_cosine_is_convex} we get:
\begin{lemma}
 The functional $E^\textup{hyp}$ defined by $\bar{\zeta} \mapsto E(\bar{B} \bar{\zeta})$
 is strictly convex, coercive, and has a unique minimizer at $\bar{\zeta} = 0$.
\end{lemma}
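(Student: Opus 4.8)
The plan is to reduce the claim about $E^\textup{hyp}(\bar{\zeta}) = E(\bar{B}\bar{\zeta})$ to the already-established Lemma~\ref{lem:hyperbolic_cosine_is_convex}, which asserts that $E$ is strictly convex, coercive, and uniquely minimized at the origin when restricted to $V$. The key observation is that the linear map $\bar{\zeta} \mapsto \bar{B}\bar{\zeta}$ is a bijection from $\R^{n-1}$ onto $V$: its image lies in $V$ because each column $\tilde{b}_i$, $i=1,\dots,n-1$, was constructed precisely so that $\tilde{b}_i \in V$, and it is injective because the full matrix $B$ is invertible (Lemma~\ref{lem:inverse_hyperbolic_coordinates}), so its first $n-1$ columns are linearly independent. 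Hence $\bar{B}$ is a linear isomorphism $\R^{n-1} \to V$, and $E^\textup{hyp}$ is just the pullback of $E|_V$ along this isomorphism.

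From there each of the three properties transfers by a standard, short argument. For strict convexity: if $\bar{\zeta}_1 \neq \bar{\zeta}_2$ then $\bar{B}\bar{\zeta}_1 \neq \bar{B}\bar{\zeta}_2$ by injectivity, so for $t \in (0,1)$,
\begin{equation*}
 E^\textup{hyp}\big(t\bar{\zeta}_1 + (1-t)\bar{\zeta}_2\big)
 =
 E\big(t\bar{B}\bar{\zeta}_1 + (1-t)\bar{B}\bar{\zeta}_2\big)
 <
 t\,E(\bar{B}\bar{\zeta}_1) + (1-t)\,E(\bar{B}\bar{\zeta}_2),
\end{equation*}
using strict convexity of $E$ on $V$ together with the fact that $\bar{B}\bar{\zeta}_1, \bar{B}\bar{\zeta}_2 \in V$. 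Alternatively one can compute the Hesse matrix $\bar{B}^T \diag(\exp(\bar{B}\bar{\zeta})) \bar{B}$ directly and note it is positive definite because $\bar{B}$ has full column rank. For the minimizer: $E^\textup{hyp}(0) = E(0) = n$, and since $\bar{B}$ is onto $V$, $\min_{\bar{\zeta}} E^\textup{hyp}(\bar{\zeta}) = \min_{\xi \in V} E(\xi) = n$, attained (by the lemma) only at $\xi = 0$, hence only at $\bar{\zeta} = \bar{B}^{-1}0 = 0$ by injectivity. For coercivity: an invertible linear map between finite-dimensional spaces, and its inverse, are both bounded, so $\norm{\bar{\zeta}} \to \infty$ forces $\norm{\bar{B}\bar{\zeta}} \to \infty$ within $V$; combined with coercivity of $E$ on $V$ this gives $E^\textup{hyp}(\bar{\zeta}) \to \infty$.

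There is no real obstacle here — the statement is essentially a corollary of Lemma~\ref{lem:hyperbolic_cosine_is_convex} once one records that $\bar{B}$ is a linear isomorphism onto $V$. The only point that deserves a sentence of care is the claim that the first $n-1$ columns of $B$ are linearly independent \emph{and} span exactly $V$ (as opposed to a proper subspace): independence follows from invertibility of the full $B$, and the image being all of $V$ follows by dimension count, since the image is contained in the $(n-1)$-dimensional space $V$ and has dimension $n-1$. I would state this as the single lemma-internal claim and then let the three properties follow in one or two lines each.
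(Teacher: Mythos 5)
Your proof is correct and fills in exactly the argument the paper leaves implicit (the paper simply asserts the lemma follows "from Lemma~\ref{lem:hyperbolic_cosine_is_convex}" without writing out the pullback): you identify $\bar{B}$ as a linear isomorphism $\R^{n-1} \to V$ and transfer strict convexity, coercivity, and the unique minimizer along it, which is precisely the intended route.
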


\begin{figure}
 \begin{center}
  \subfigure[Convex: $a = (2,\frac{1}{4})^T$, $\omega_n \approx -0.346$]  
  {
   \begin{tikzpicture}
    \pgfdeclarelayer{background}
    \pgfsetlayers{background,main}

    \begin{axis}[
      axis x line=middle,
      axis y line=middle,
      axis line style={<->,color=black, thick},
      xtick={-5,-4,...,5},
      ytick={0,5,...,20},
      xlabel={$\bar{\zeta}$},
      ylabel={$\disthyp{\omega}$},
      xlabel style={below right},
      ylabel style={below right},
      width=0.5\textwidth,
      xmin=-2,xmax=2,
      ymin=-2,ymax=20]

     \begin{pgfonlayer}{background}
      \fill [color=black!10!white] (0,-0.5) rectangle (2.2,0.5);
     \end{pgfonlayer}

     \addplot [very thick, mark=none,samples=30,domain=-2:2,variable=zeta] {0.5*(exp(zeta)-2)^2 + 0.5*(exp(-zeta)-0.25)^2};
    \end{axis}
  \end{tikzpicture}
  }
  \hspace{0.05\textwidth}
  \subfigure[Not convex: $a = (4,3)^T$, $\omega_n \approx 1.242$]  
  {
   \label{fig:da_hyp_2d_not_convex}
   \begin{tikzpicture}
    \pgfdeclarelayer{background}
    \pgfsetlayers{background,main}

    \begin{axis}[
      axis x line=middle,
      axis y line=middle,
      axis line style={<->,color=black, thick},
      xtick={-5,-4,...,5},
      ytick={0,5,...,30},
      xlabel={$\bar{\zeta}$},
      ylabel={$\disthyp{\omega}$},
      xlabel style={below right},
      ylabel style={below right},
      width=0.5\textwidth,
      xmin=-2,xmax=2,
      ymin=-2,ymax=20]

     \begin{pgfonlayer}{background}
      \fill [color=black!10!white] (0,-0.5) rectangle (2.2,0.5);
     \end{pgfonlayer}

     \addplot [very thick, mark=none,samples=30,domain=-2:2,variable=zeta] {0.5*(exp(zeta)-4)^2 + 0.5*(exp(-zeta)-3)^2};
    \end{axis}
   \end{tikzpicture}
  }
 \end{center}

 \caption{The squared distance $\disthyp{\omega}$ to $\sl(2)$, in hyperbolic coordinates.
  Left: From a point $a$ with $\product(a) < 1$, Right: From a point $a$ with $\product(a) > 1$.
  The order cone is the positive half-axis.}
 \label{fig:da_hyp_2d}
\end{figure}
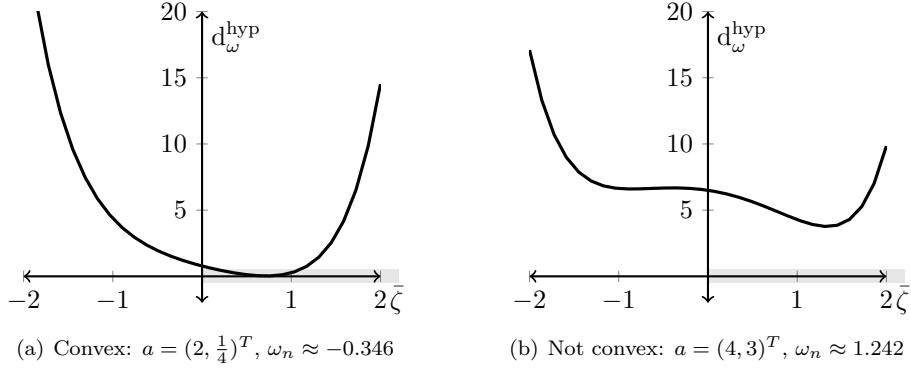

\begin{figure}
 \begin{center}
 \subfigure[$a = (6,5,4.5)^T$, $\omega_n \approx 1.635$]{%
   \includegraphics{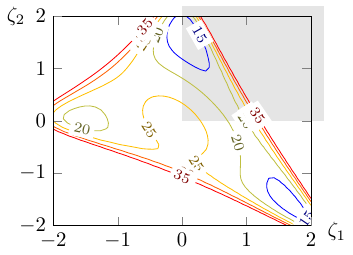}
  }
  \subfigure[$a = (5,1.2,1.1)^T$, $\omega_n \approx 0.629$]{%
    \includegraphics{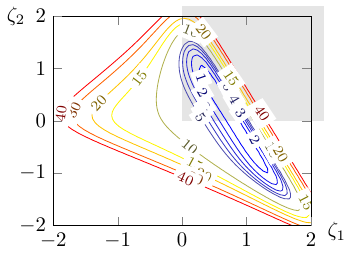}
  }
 \end{center}

 \caption{Level lines of $\disthyp{a}$ for $n=3$ for two points $a$ with $\product(a) > 1$
 (the grey square visualizes the order cone). The left picture shows a scenario
 with several local minimizers. The right picture illustrates that the energy landscape
 can exhibit long ``valleys'', which makes finding minimizers an algorithmic challenge.}
 \label{fig:da_hyp_3d}
\end{figure}

Finally, note that because of
\begin{equation*}
 \omega_n
 =
 \frac{1}{n} \sum_{i=1}^n \log a_i
 =
 \frac{1}{n} \log \product(a),
\end{equation*}
the term $\exp \omega_n$ in~\eqref{eq:disthyp_reduced} is
\begin{equation*}
 \exp \omega_n
 =
 \exp \Big(\frac{1}{n} \log \product(a)\Big)
 =
 \sqrt[n]{\product(a)},
\end{equation*}
which generalizes the geometric mean~\eqref{eq:2d_geometric_mean}
of the classical two-dimensional hyperbolic coordinates.
The important question whether $\product(a)$ is above or below~$1$ can therefore
be decided by looking at $\omega_n$ alone.

\section{Algorithms that compute the projection}
\label{sec:algorithms}

We now present four algorithms that compute the closest-point projection onto $\SL(n)$,
based on the different problem formulations encountered so far.
All algorithms operate in the space of diagonal matrices with nonnegative entries,
which we identify again with $\R^n_{\ge 0}$. To arrive at this representation,
each algorithm starts by computing the singular value decomposition of the
input matrix $A$. To save space we will omit this decomposition from all pseudocode
listings that appear in this chapter.

\subsection{Root-finding for the Lagrange multiplier}

\label{sec:direct_root_finding}

The first algorithm uses the solution path $\mathcal{P} : \big(-\infty,\frac{a_n^2}{2}\big] \to C_n$
introduced in Chapter~\ref{sec:uniqueness_in_the_order_cone}.
Recall that this is a continuous path with $\mathcal{P}(0) = a$ and
$\mathcal{P}(\tfrac{a_n^2}{2}) = \bar{a} \coloneqq (a_1,\dots,a_{n-1},0)$,
and such that if
\begin{equation}
\label{eq:root_finding}
 \product \big(\mathcal{P}(\lambda)\big)=1
\end{equation}
then the pair $(\mathcal{P}(\lambda),\lambda)$
solves the stationarity conditions~\eqref{eq:lagrange_vector_formulation}.

Since~\eqref{eq:root_finding} is a root-finding problem for a scalar, continuous
function, it can be solved using standard methods like bisection or regula falsi.

Suitable initial search intervals can be constructed based on knowing $\product(a)$.
If $\product(a) > 1$ then we know that there is (at least) one $\lambda$ that
solves~\eqref{eq:root_finding} in $\big[0,\frac{a_n^2}{2}\big]$, whereas
if $\product(a) < 1$, by Lemma~\ref{lem:solution_path_properties} the equation~\eqref{eq:root_finding}
has a solution in the set $(-\infty, 0]$, and this solution is unique. By the strict convexity shown
in Lemma~\ref{lemma:G+-is-convex} this is the unique global minimizer of the squared
Euclidean distance to the point $a$ in the positive orthant.

For $\product(a) < 1$ we can even show that the unique solution of~\eqref{eq:root_finding} is contained
in the interval $[-1,0]$.  Indeed, since we assume that $a_i \ge 0$ the stationarity
conditions imply that for the minimizer $p\in\sl(n)$
\begin{equation*}
  p_i + \frac{\lambda}{p_i} = a_i \ge 0,
\end{equation*}
which implies $\lambda \ge -p_i^2$ for all $i=1,\hdots,n$.  Since the $p_i$ cannot
all be larger than~$1$,
$\lambda \ge -1$ is a necessary condition for the minimizer.

As $\mathcal{P}$ is differentiable except at $\lambda = \frac{a_n^2}{4}$, a faster root-finding method
like regula falsi is a natural choice. However, while faster, in practical experiments
we found it to be less stable than the plain bisection method.

In summary, we arrive at the following algorithm:

\begin{algorithm}[H]
  \SetKwInOut{Input}{input}
  \SetKwInOut{Output}{output}
  \DontPrintSemicolon

  \Input {$a \in \R^n_{\ge 0}$}
  \BlankLine

  \eIf{$\product(a) < 1$}
  {Do (e.g.) bisection on $[-1,0]$ to determine the $\lambda$ with $\product \big(\mathcal{P}(\lambda)\big) = 1$ \;}
  {Do (e.g.) bisection on $[0,\frac{a^2_n}{2}]$ to determine a $\lambda$ with $\product \big(\mathcal{P}(\lambda)\big) = 1$ \;}

  \BlankLine
  \Output {$\mathcal{P}(\lambda)$}

  \caption{Root-finding for the Lagrange multiplier}
\end{algorithm}

\subsection{Composite step minimization}
\label{sec:composite_step}

The second algorithm is a conforming algorithm, i.e., the iterates $p^k$ it produces
all satisfy $p^k \in \sl(n)$. The algorithm has been inspired by
a work by \citeauthor{Elser2017} on matrix factorizations~\cite{Elser2017}.

Let $k \in \N$ be the iteration counter and $p^k \in \sl(n) \cap \R^n_{\ge 0}$.
Recall that $T_{p^k} \sl(n) \subset \R^n$ is the tangent plane of $\sl(n)$ at $p^k$.
At each iteration, the algorithm computes the line through $a$ that is normal
to $T_{p^k} \sl(n)$, and sets $p^{k+1}$ to be the unique intersection
of this line with $\sl(n) \cap \R^n_{\ge 0}$ (Figure~\ref{fig:composite_step_algorithm}).
More formally, we get the following algorithm:\\
\smallskip
\begin{algorithm}[H]
    \label{alg:composite_step}
    \SetKwInOut{Input}{input}
    \DontPrintSemicolon

    \Input {$a \in \R^n_{\ge 0}$, initial iterate $p^0 \in \sl(n)$}
    \BlankLine

    \ForEach{$k=0,1,2,\dots$}
    {
      \CommentSty{Compute normal vector $d^k$ of $T_{p^k}\textrm{sl}(n)$:} \;
      $d^k \leftarrow \nabla \product(p^k) = (p^k)^{-1}$ \;

      \CommentSty{Compute interval to search intersection in:} \;
      \eIf{$\product(a) < 1$}
      {
        $t_\textup{min} \leftarrow 0$  \;
        $t_\textup{max} \leftarrow 1 - \sqrt[n]{\product(a)}$  \;
      }
      {
        $t_\textup{min} \leftarrow \max_i (-a_i p^k_i)$ \;
        $t_\textup{max} \leftarrow 0$ \;
      }

      Do bisection on $[t_\textup{min}, t_\textup{max}]$ for unique zero $t^k$
      of $t \mapsto \product(a + td^k)-1$ \;
      \label{line:composite_step_bisection}

      $p^{k+1} \leftarrow a + t^k d^k$ \;
    }

    \caption{Composite-step minimization}
\end{algorithm}

The well-posedness of the intersection problem in Line~\ref{line:composite_step_bisection}
is justified by the following lemma.

\begin{lemma}
\label{lem:composite_step_step_size_nonpositive}
 For any $a \in \R_{\ge 0}^n$ and $p \in \sl(n) \cap \R^n_{\ge 0}$,
 there is a unique $t_*$ in the interval $[t_\text{min}, t_\text{max}]$
 such that $a + t_* p^{-1} \in \sl(n) \cap \R^n_{\ge 0}$,
 with $t_\textup{min} \colonequals \max_i (-a_i p_i)$
 and $t_\textup{max} \colonequals 1 - \sqrt[n]{\product(a)}$.
 This $t_*$ is in $[t_\textup{min}, 0]$ if $\product(a) \ge 1$, and in $[0,t_\textup{max}]$
 otherwise.
\end{lemma}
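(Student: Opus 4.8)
The plan is to analyze the scalar function $g(t) \colonequals \product(a + t p^{-1}) - 1$ on the interval $[t_\textup{min}, t_\textup{max}]$ and show it has exactly one zero there, then locate that zero relative to $0$ according to the sign of $\product(a) - 1$. First I would note that $g$ is a polynomial in $t$ of degree $n$ (each factor $a_i + t p_i^{-1}$ is affine in $t$), so in particular it is continuous and smooth. The crucial monotonicity observation is that on the range of $t$ where all the factors $a_i + t p_i^{-1}$ are strictly positive, the map $t \mapsto \prod_i (a_i + t p_i^{-1})$ is strictly increasing, since each positive factor has positive slope $p_i^{-1} > 0$ and a product of positive, strictly increasing functions is strictly increasing. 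The left endpoint $t_\textup{min} = \max_i(-a_i p_i)$ is chosen precisely so that $a_i + t_\textup{min} p_i^{-1} \ge 0$ for every $i$, with equality for the maximizing index; for $t > t_\textup{min}$ all factors are strictly positive. Thus $g$ is strictly increasing on $[t_\textup{min}, \infty)$, which immediately gives uniqueness of any zero in $[t_\textup{min}, t_\textup{max}]$.

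For existence I would check the signs of $g$ at suitable points. At $t = t_\textup{min}$ at least one factor vanishes, so $\product(a + t_\textup{min} p^{-1}) = 0$ and hence $g(t_\textup{min}) = -1 < 0$. At $t = 0$ we have $g(0) = \product(a) - 1$. If $\product(a) \ge 1$ then $g(0) \ge 0$, so by the intermediate value theorem combined with strict monotonicity there is a unique zero $t_* \in [t_\textup{min}, 0]$; and since $t_\textup{max} = 1 - \sqrt[n]{\product(a)} \le 0$ in this case, one must check that the interval statement $[t_\textup{min}, t_\textup{max}]$ is still consistent — actually when $\product(a) \ge 1$ the algorithm uses the interval $[\max_i(-a_i p_i^k), 0]$, so I would phrase the claim so that the relevant search interval is $[t_\textup{min}, 0]$ in that branch, matching the pseudocode. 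If instead $\product(a) < 1$, then $g(0) = \product(a) - 1 < 0$, and I need an upper endpoint where $g$ is nonnegative: I claim $g(t_\textup{max}) \ge 0$ with $t_\textup{max} = 1 - \sqrt[n]{\product(a)} > 0$. To see this, evaluate $\product(a + t_\textup{max} p^{-1}) = \prod_i (a_i + t_\textup{max} p_i^{-1})$ and use the AM–GM-type bound together with $\product(p) = 1$: writing $s \colonequals \sqrt[n]{\product(a)}$, one shows $\prod_i(a_i + (1-s)p_i^{-1}) \ge \prod_i (s p_i^{-1} \cdot \text{something})$ — more cleanly, substitute the known product $\product(a) = s^n$ and $\product(p^{-1}) = 1$ and expand, or argue via the superadditivity of the $n$-th root of products, $\sqrt[n]{\product(x + y)} \ge \sqrt[n]{\product(x)} + \sqrt[n]{\product(y)}$ for $x, y \in \R^n_{\ge 0}$, which gives $\sqrt[n]{\product(a + t_\textup{max} p^{-1})} \ge \sqrt[n]{\product(a)} + t_\textup{max}\sqrt[n]{\product(p^{-1})} = s + (1-s) = 1$, hence $\product(a + t_\textup{max} p^{-1}) \ge 1$ and $g(t_\textup{max}) \ge 0$. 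The intermediate value theorem then yields a unique zero $t_* \in [0, t_\textup{max}]$.

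The main obstacle is the inequality $g(t_\textup{max}) \ge 0$ in the case $\product(a) < 1$, i.e., establishing that the chosen upper bound $t_\textup{max} = 1 - \sqrt[n]{\product(a)}$ really does overshoot the root. The slick route is the superadditivity of $x \mapsto \sqrt[n]{\product(x)}$ on the nonnegative orthant, which is itself a restatement of the concavity of $\log \product$ (already used in Lemma~\ref{lemma:G+-is-convex}) via the weighted AM–GM inequality: for $x, y \ge 0$ componentwise and $s \colonequals \sqrt[n]{\product(x)}/(\sqrt[n]{\product(x)} + \sqrt[n]{\product(y)})$ one splits $x_i + y_i \ge \tfrac{x_i}{s}\cdot s^{?}\dots$ — I would just invoke the standard fact $\big(\prod_i(x_i + y_i)\big)^{1/n} \ge \big(\prod_i x_i\big)^{1/n} + \big(\prod_i y_i\big)^{1/n}$ (Minkowski's inequality for the geometric mean / Mahler's inequality) and apply it with $x = a$, $y = t_\textup{max} p^{-1}$, using $\product(p^{-1}) = \product(p)^{-1} = 1$. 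Everything else — continuity, the degree-$n$ polynomial structure, strict monotonicity past $t_\textup{min}$, and the sign bookkeeping at $t_\textup{min}$ and $0$ — is routine. I would close by summarizing: $g$ is strictly increasing on $[t_\textup{min}, \infty)$ with $g(t_\textup{min}) < 0$, and $g$ attains a nonnegative value at $0$ (if $\product(a) \ge 1$) or at $t_\textup{max}$ (if $\product(a) < 1$), so there is exactly one zero $t_*$, lying in $[t_\textup{min}, 0]$ or $[0, t_\textup{max}]$ respectively.
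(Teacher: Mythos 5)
Your proposal is correct and follows essentially the same route as the paper: strict monotonicity of $t \mapsto \product(a + tp^{-1})$ on $[t_\textup{min},\infty)$, the sign evaluations at $t_\textup{min}$ and $0$, and the Minkowski determinant inequality (superadditivity of $\product(\cdot)^{1/n}$ on the nonnegative orthant) to control $t_\textup{max}$. The only cosmetic difference is that you apply the inequality at $t_\textup{max}$ to get $g(t_\textup{max}) \ge 0$, whereas the paper applies it at the already-existing root $t_*$ to conclude $t_* \le t_\textup{max}$; your factor-by-factor monotonicity argument is also a slightly cleaner justification than the paper's ``polynomial with positive coefficients'' phrasing, and you rightly flag that the two sub-intervals $[t_\textup{min},0]$ and $[0,t_\textup{max}]$ are what the algorithm actually uses.
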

\begin{proof}
 Note first that $a + t p^{-1}$ is in $\R^n_{\ge 0}$ if and only if $t \ge t_\textup{min}$.
 At $t = t_\textup{min}$, we have $\product(a + tp^{-1}) = 0$, and
 for all $t > t_\textup{min}$, the expression $\product(a + tp^{-1})$ is a polynomial
 with positive real coefficients, and therefore $\product(a + tp^{-1}) \to \infty$
 for $t \to \infty$. Its first derivative is also a polynomial with positive coefficients.
 Therefore $t \mapsto \product(a + tp^{-1})$ is strictly monotone increasing.
 This proves that there is a unique $t_*$ with $\product(a+t_*p^{-1}) = 1$ in the interval $[t_\textup{min},\infty)$.

 Since $\product(a + tp^{-1}) = \product(a)$ at $t=0$, we must have $t_* \le 0$
 if $\product(a) \ge 1$ and $t_* \ge 0$ if $\product(a) \le 1$.
 To show that $t_* \le t_\textup{max}$ in the latter case, note that $t_* > 0$
 can only happen if $\product(a) \le 1$.  For $t \ge 0$ one can then invoke
 the Minkowski determinant theorem~\cite{marcus_minc:1992} to get
 \begin{equation*}
  1
  =
  \product(a + t_*d^k)^{\frac{1}{n}}
  \ge
  \product(a)^{\frac{1}{n}}
  +
  \product(t_*d^k)^{\frac{1}{n}}
  =
  \product(a)^{\frac{1}{n}}
  +
  t_*.
 \end{equation*}
 This implies $t_* \le 1 - \product(a)^{\frac{1}{n}} \equalscolon t_\textup{max}$.
\end{proof}

To avoid the costly $n$-th root it is possible to use the slightly weaker bound
$t_* \le 1$ instead.

\medskip

To investigate the properties of Algorithm~\ref{alg:composite_step} we rewrite it as a
composite-step method (see, e.g., \cite[Chapter~15.4]{conn_gould_toint:2000}).
Steps of such a method consist of two substeps: A ``tangent step'' that
decreases the objective functional, and a ``normal step'' that tries
to restore feasibility.  In our case we get:
\begin{description}
 \item[Tangent step] \label{item:tangent_step}
  Compute $p^{k+\frac{1}{2}}$, the unique minimizer of $\dist{a}$
  on the tangent space of $\sl(n)$ at $p^k$.

 \item[Normal step] \label{item:normal_step}
  Do a line search on the line from $a$ through $p^{k+\frac{1}{2}}$
  to find the unique point $p^{k+1} \in \sl(n) \cap \R^n_{\ge 0}$ on that line.
\end{description}

To see the equivalence of the two formulations, note that if $p^{k+\frac{1}{2}}$ really is
the minimizer of the squared Euclidean distance to $a$ on the tangent space $T_{p^k} \sl(n)$,
then $a-p^{k+\frac{1}{2}}$ must be orthogonal to that space.
Hence searching on the line that contains $a$ and $p^{k+\frac{1}{2}}$
is the same as searching in the normal direction $d^k \coloneqq \nabla \product(p^k)$
of $T_{p^k} \sl(n)$.

\begin{figure}
	\centering
	\begin{tikzpicture}

 \pgfdeclarelayer{background}
 \pgfsetlayers{background,main}

 \begin{axis}[
  axis x line=middle,
  axis y line=middle,
  axis line style={<->,color=black, thick},
  x label style={anchor=north},
  y label style={anchor=east},
  xmin=-0.5,xmax=4,
  ymin=-0.5,ymax=3,
  xtick={0,5,...,},
  ytick={0,5,...,},
  ]

  \begin{pgfonlayer}{background}
   \fill [color=black!10!white] (-0.2,-0.2) -- (3.8,-0.2) -- (3.8,3.0) -- (3,3) -- cycle;
  \end{pgfonlayer}

  \draw[blue,thick, smooth,samples=50,domain=0.4411569133490857:4.070203758747689] plot(\x,{1/\x});

  \draw [dashed,domain=0.44:4.07] plot(\x,{(--1.58-0.62*\x)/1});
  \draw[red] (3.28,2.18)-- (2.1,0.27);
  \draw [dashed,domain=0.44:4.07] plot(\x,{(--0.9-0.2*\x)/1});
  \draw[red] (3.28,2.18)-- (2.9,0.31);

  \fill  (3.28,2.18) circle (1.5pt);
  \draw (3.3,2.21) node[right] {$a$};
  \fill  (1.27,0.79) circle (1.5pt);
  \draw (1.29,0.82) node[above] {$p^k$};
  \fill  (2.1,0.27) circle (1.5pt);
  \draw (2.12,0.35) node[below left] {$p^{k+\frac12}$};
  \fill  (2.21,0.45) circle (1.5pt);
  \draw (2.23,0.48) node[above] {$p^{k+1}$};
  \fill  (2.9,0.31) circle (1.5pt);
  \draw (3.15,0.35) node[below left] {$p^{k+\frac32}$};
  \fill  (2.91,0.34) circle (1.5pt);
  \draw (2.93,0.37) node[above right] {$p^{k+2}$};

  \draw[blue] (0.8,2) node{$\sl(n)$};

 \end{axis}

\end{tikzpicture}
	\caption{Two steps of the composite-step minimization algorithm.
	Tangent spaces are dashed, search lines are red.}
	\label{fig:composite_step_algorithm}
\end{figure}
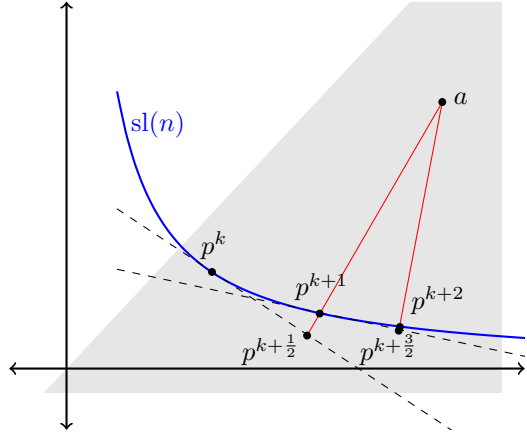

If $\product(a) \ge 1$, then any sequence of iterates started from within the order cone
stays within the order cone.

\begin{lemma}
\label{lem:cs_remains_in_cone}
  If $a\in C_n$ with $\product(a) \ge 1$, then $p^k \in C_n$ implies $p^{k+1} \in C_n$.
\end{lemma}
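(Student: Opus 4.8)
The plan is to track what the composite step does to the ordering of the components. Fix $k$ and write $p \colonequals p^k \in C_n$, so that $p_1 \ge p_2 \ge \dots \ge p_n > 0$, hence for the componentwise inverse $d \colonequals p^{-1}$ we have $d_1 \le d_2 \le \dots \le d_n$. The new iterate is $p^{k+1} = a + t^k d$ with $t^k \le 0$, because $\product(a) \ge 1$ (this is exactly the sign statement from Lemma~\ref{lem:composite_step_step_size_nonpositive}). So I want to show: if $a \in C_n$, $d = p^{-1}$ with $p \in C_n \cap \R^n_{\ge 0}$, and $t \le 0$, then $a + t d \in C_n$. Componentwise this means checking $a_i + t d_i \ge a_{i+1} + t d_{i+1}$ for every $i$, i.e.
\begin{equation*}
 (a_i - a_{i+1}) + t (d_i - d_{i+1}) \ge 0.
\end{equation*}
Here $a_i - a_{i+1} \ge 0$ since $a \in C_n$, and $d_i - d_{i+1} \le 0$ since $d$ is increasing, and $t \le 0$, so $t(d_i - d_{i+1}) \ge 0$. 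Both summands are nonnegative, so the inequality holds, and therefore $p^{k+1} \in C_n$.

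First I would record the three sign facts: $a \in C_n$ gives $a_i \ge a_{i+1}$; $p \in C_n \cap \R^n_{\ge 0}$ together with $\product(p)=1$ forces $p_i > 0$, and then $p_i \ge p_{i+1} > 0$ gives $p_i^{-1} \le p_{i+1}^{-1}$, i.e. the normal direction $d^k = (p^k)^{-1}$ lies in the reversed order cone; and $\product(a) \ge 1$ together with Lemma~\ref{lem:composite_step_step_size_nonpositive} (or Lemma~\ref{lem:sign_of_lambda}, via the equivalence of the two formulations of the algorithm) gives $t^k \le 0$. Second, I would combine these in the one-line inequality above, applied for each $i = 1, \dots, n-1$. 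That finishes the argument.

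There is essentially no hard part here: the whole statement is the observation that moving from $a$ in the direction of $-d^k$ (a nonincreasing-to-nonpositive perturbation of a nonincreasing vector, scaled by a nonpositive number) preserves the ordering. The only thing to be careful about is invoking the correct earlier result for the sign of $t^k$: the bisection in Line~\ref{line:composite_step_bisection} is carried out on $[t_\textup{min},0]$ precisely when $\product(a) \ge 1$, so $t^k \in [t_\textup{min},0]$ and in particular $t^k \le 0$, which is all that is needed.
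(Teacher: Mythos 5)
Your proof is correct and follows essentially the same approach as the paper: use Lemma~\ref{lem:composite_step_step_size_nonpositive} to get $t^k \le 0$, observe that $t^k(p^k)^{-1}$ then has nonincreasing components (i.e.\ lies in $C_n$), and conclude because $C_n$ is closed under addition. You merely spell out the componentwise inequality that the paper leaves implicit.
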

\begin{proof}
 By construction, $p^{k+1} = a + t^k d^k = a + t^k (p^k)^{-1}$,
 and by Lemma~\ref{lem:composite_step_step_size_nonpositive}, $t^k$ is nonpositive if $\product(a) \ge 1$.
 The components of $p^k$ are ordered and positive, therefore the components of
 $-(p^k)^{-1}$ are also ordered. The assertion follows because
 the sum of two elements in $C_n$ is also in $C_n$.
\end{proof}

The condition $\product(a) \ge 1$ is necessary for the assertion of Lemma~\ref{lem:cs_remains_in_cone}.
If $\product(a) < 1$, the iterates may leave the order cone.

We now prove that accumulation points of the algorithm are stationary points of the
squared distance $\dist{a}(p) \colonequals \frac{1}{2} \norm{p-a}^2$.
First, we claim that if $a \in \R^n_{\ge 0}$ then
the method is monotone, i.e., no iterate is ever farther away from $a$ than its predecessor.
This is easy to prove if $\product(a) \ge 1$.

\begin{lemma}[Energy decrease]
\label{lem:composite_step_energy_decrease}
 If $\product(a)\ge 1$ the sequence of points $(p^k)$ produced by the composite-step algorithm
 fulfills $\dist{a}(p^{k+1}) \le \dist{a}(p^k)$ for all $k=0,1,2,\dots$
\end{lemma}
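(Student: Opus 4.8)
The plan is to show that each composite step can only decrease the squared distance, by comparing the intermediate tangent-step point $p^{k+\frac12}$ with both $p^k$ and $p^{k+1}$. First I would recall that $p^{k+\frac12}$ is by construction the unique minimizer of $\dist{a}$ on the affine tangent space $T_{p^k}\sl(n)$, and that $p^k$ itself lies in that tangent space (since $p^k\in\sl(n)$ and the tangent space passes through $p^k$). Hence $\dist{a}(p^{k+\frac12}) \le \dist{a}(p^k)$ immediately. It remains to show $\dist{a}(p^{k+1}) \le \dist{a}(p^{k+\frac12})$, i.e.\ that the normal step does not increase the distance.

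For the normal step I would argue geometrically along the search line. The three points $a$, $p^{k+\frac12}$, and $p^{k+1}$ are collinear: they all lie on the line $\{a + t d^k : t \in \R\}$ with $d^k = (p^k)^{-1}$, because $p^{k+\frac12}$ being the closest point to $a$ on $T_{p^k}\sl(n)$ means $a - p^{k+\frac12}$ is parallel to the normal direction $d^k$. Write $p^{k+\frac12} = a + s d^k$ and $p^{k+1} = a + t^k d^k$. Then $\dist{a}(p^{k+\frac12}) = \tfrac12 s^2 \norm{d^k}_2^2$ and $\dist{a}(p^{k+1}) = \tfrac12 (t^k)^2 \norm{d^k}_2^2$, so the claim reduces to $\abs{t^k} \le \abs{s}$. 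Here I would use the assumption $\product(a)\ge 1$: by Lemma~\ref{lem:composite_step_step_size_nonpositive} we have $t^k \in [t_\textup{min}, 0]$, so $t^k \le 0$. On the other hand $p^{k+\frac12}$ lies on the tangent plane at $p^k$, which (since $\sl(n)\cap\R^n_{\ge 0}$ is the boundary of the convex set $\sl^+(n)\cap\R^n_{\ge 0}$ by Lemma~\ref{lemma:G+-is-convex}) is a supporting hyperplane: the convex set lies entirely on one side of it. Since $a \in \sl^+(n)$ when $\product(a)\ge 1$, the point $a$ and the whole convex set $\sl^+(n)\cap\R^n_{\ge 0}$ lie on the same side of $T_{p^k}\sl(n)$; moving from $a$ toward the tangent plane (to reach $p^{k+\frac12}$) and then continuing to the actual surface (to reach $p^{k+1}$) moves in the same direction and at least as far, giving $0 \ge s \ge t^k$, hence $\abs{t^k}\le\abs{s}$.

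Concretely, to make the sign bookkeeping precise I would use the monotonicity established in the proof of Lemma~\ref{lem:composite_step_step_size_nonpositive}: the scalar function $t \mapsto \product(a + t d^k)$ is strictly monotone increasing on $[t_\textup{min},\infty)$, equals $\product(a)\ge 1$ at $t=0$, and equals $1$ at $t = t^k$, forcing $t^k \le 0$. For $s$, the tangent plane at $p^k$ is $\{x : \langle (p^k)^{-1}, x - p^k\rangle = 0\}$, so $s$ solves $\langle d^k, a + s d^k - p^k \rangle = 0$, giving $s = \langle d^k, p^k - a\rangle / \norm{d^k}_2^2 = (n - \langle d^k, a\rangle)/\norm{d^k}_2^2$ using $\langle (p^k)^{-1}, p^k \rangle = n$. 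The condition $s \le t^k$, i.e.\ that $a$ is separated from the surface by the tangent plane with the surface on the far side, is exactly the statement that the tangent plane supports the convex region $\sl^+(n)\cap\R^n_{\ge 0}$ and that $a$ lies in that region; combining the two sign facts $s\le 0$ is not quite enough by itself, so the supporting-hyperplane comparison $s \le t^k \le 0$ is the real content. Concluding, $\dist{a}(p^{k+1}) = \tfrac12 (t^k)^2\norm{d^k}_2^2 \le \tfrac12 s^2 \norm{d^k}_2^2 = \dist{a}(p^{k+\frac12}) \le \dist{a}(p^k)$.

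The main obstacle I expect is making the inequality $\abs{t^k} \le \abs{s}$ fully rigorous: one must carefully argue, using convexity of $\sl^+(n)\cap\R^n_{\ge 0}$ (Lemma~\ref{lemma:G+-is-convex}) and the fact that $\product(a)\ge 1$ puts $a$ inside that convex set, that stepping from $a$ along the inward normal $d^k$ reaches the tangent hyperplane \emph{before} it reaches the surface, so that the two step parameters have the same sign and $\abs{s}$ dominates $\abs{t^k}$. Everything else — collinearity, the reduction to comparing $\abs{t^k}$ and $\abs{s}$, and the trivial tangent-step inequality — is routine.
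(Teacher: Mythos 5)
Your approach is essentially the paper's: both hinge on the convexity of $\sl^+(n) \cap \R^n_{\ge 0}$ (Lemma~\ref{lemma:G+-is-convex}) together with the fact that $\product(a)\ge 1$ places $a$ inside that set, and both reduce the normal-step inequality to a ``between-ness'' statement for the three collinear points $a$, $p^{k+1}$, $p^{k+\frac12}$. The paper states it qualitatively ($p^{k+\frac12}\notin\sl^+(n)\cap\R^n_{\ge 0}$ unless it equals $p^k$, $a\in\sl^+(n)\cap\R^n_{\ge 0}$, and $p^{k+1}$ is the unique boundary crossing, hence between them); you make it parametric with $s$ and $t^k$. That is a legitimate and slightly more explicit route.

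However, your sign bookkeeping is inverted at exactly the crucial step. You write the ordering as $0 \ge s \ge t^k$ and describe the geometry as ``reaches the tangent hyperplane \emph{before} it reaches the surface.'' Both are backwards, and $0 \ge s \ge t^k$ gives $\abs{s} \le \abs{t^k}$, which is the \emph{opposite} of what you then assert. Since the tangent hyperplane $T_{p^k}\sl(n)$ supports the convex set $\sl^+(n)\cap\R^n_{\ge 0}$, every point of its closure---in particular both $a$ and $p^{k+1}\in\sl(n)\cap\R^n_{\ge 0}$---satisfies $\langle d^k, x - p^k\rangle \ge 0$, whereas $p^{k+\frac12}$ satisfies equality. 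Plugging $x = a + t^k d^k$ and using $\langle d^k, a + s d^k - p^k\rangle = 0$ yields $(t^k - s)\norm{d^k}^2 \ge 0$, i.e.\ $s \le t^k \le 0$: moving from $a$ you hit the \emph{surface} first (at $t^k$) and the tangent hyperplane afterwards (at $s$). This gives $\abs{t^k}\le\abs{s}$ and hence $\dist{a}(p^{k+1}) \le \dist{a}(p^{k+\frac12})$. With that sign corrected, the argument is sound and matches the paper's in substance.
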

\begin{proof}
We have $\dist{a}(p^{k+\frac12}) \le \dist{a}(p^k)$ by construction.
The strict convexity of $\sl^+(n) \cap \R^n_{\ge 0}$ (Lemma~\ref{lemma:G+-is-convex})
implies that $T_{p^k}\sl(n)\cap \sl(n) \cap \R^n_{\ge 0} = \{ p^k\}$, and hence
$p^{k+\frac{1}{2}} \notin \sl^+(n) \cap \R^n_{\ge 0}$ unless $p^{k+\frac{1}{2}} = p^k$.
Since $p^{k+1}$ is on the line through $p^{k+\frac12}$ and $a$, and between
these two points, we get $\dist{a}(p^{k+1}) \leq \dist{a}(p^{k+\frac{1}{2}})$.
\end{proof}

Numerical evidence suggests that monotonicity holds for $\product(a) < 1$ as well.
\begin{conjecture}
 \cref{lem:composite_step_energy_decrease} holds even if $\product(a) < 1$.
\end{conjecture}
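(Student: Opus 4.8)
The natural plan is to collapse the statement to a single scalar inequality. Since the composite step produces $p^{k+1}=a+t^k d^k = a + t^k (p^k)^{-1}$ and, by Lemma~\ref{lem:composite_step_step_size_nonpositive}, the step size satisfies $0\le t^k\le t_\textup{max}\colonequals 1-\sqrt[n]{\product(a)}$ when $\product(a)\le 1$, the energy-decrease claim $\dist{a}(p^{k+1})\le\dist{a}(p^k)$ is equivalent to $\norm{p^{k+1}-a}_2\le\norm{p^k-a}_2$, hence to $t^k\norm{(p^k)^{-1}}_2\le\norm{p^k-a}_2$. Using $t^k\le t_\textup{max}$, it therefore suffices to establish the purely geometric estimate
\begin{equation*}
 \big(1-\sqrt[n]{\product(a)}\big)\,\norm{p^{-1}}_2
 \;\le\;
 \norm{p-a}_2
 \qquad\text{for all } p\in\sl(n)\cap\R^n_{\ge 0},\ a\in\R^n_{\ge 0}\text{ with }\product(a)\le 1 .
\end{equation*}
I would first dispose of the trivial case $\product(a)=1$ (then $t^k=0$) and then work with $k\ge 1$, where $t^{k-1}>0$ forces $p^k>0$ componentwise and in fact $p^k\ge a$, which is the only structural property the iterates carry.

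To see why the Minkowski determinant theorem (already used in Lemma~\ref{lem:composite_step_step_size_nonpositive}) is the right tool, consider the auxiliary point $q\colonequals a+\frac{\norm{p-a}_2}{\norm{p^{-1}}_2}\,p^{-1}$, which is nonnegative, lies on the normal ray through $a$ used by the algorithm, and satisfies $\norm{q-a}_2=\norm{p-a}_2$. Since $\product(p^{-1})=1$, the Minkowski inequality gives $\product(q)^{1/n}\ge\product(a)^{1/n}+\frac{\norm{p-a}_2}{\norm{p^{-1}}_2}$. Hence the geometric estimate above implies $\product(q)\ge 1$, i.e.\ $q\in\sl^+(n)\cap\R^n_{\ge 0}$; and because $p^{k+1}$ is (by the monotonicity of $t\mapsto\product(a+t p^{-1})$ established in Lemma~\ref{lem:composite_step_step_size_nonpositive}) the point of that convex set closest to $a$ along the ray, we would get $\norm{p^{k+1}-a}_2\le\norm{q-a}_2=\norm{p^k-a}_2$. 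Thus it is actually enough to prove the weaker statement $\product(q)\ge 1$, which is a clean inequality about a single pair $(p,a)$.

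The main obstacle — and, I think, the reason this is only a conjecture — is that the displayed estimate is \emph{false} for arbitrary $p\in\sl(n)\cap\R^n_{\ge 0}$ once $n\ge 3$: letting $a\to 0$ it degenerates to $\norm{p}_2\ge\norm{p^{-1}}_2$, which fails e.g.\ for $p=(2,2,\tfrac14)^T$ with $n=3$ (there $\norm{p^{-1}}_2^2=\tfrac{33}{2}>\tfrac{129}{16}=\norm{p}_2^2$), and indeed for $a$ a small positive multiple of $\mathbbm 1$ the algorithm started at $p^0=(2,2,\tfrac14)^T$ oscillates and the energy strictly increases. So a correct statement must restrict the admissible configurations. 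The plausible fix is to additionally require that all iterates stay in the order cone $C_n$ — unlike the case $\product(a)\ge 1$ this is not automatic (see the remark after Lemma~\ref{lem:cs_remains_in_cone}), but it does rule out the counterexamples, since there the very next iterate $(p^1\approx(\tfrac12,\tfrac12,4)^T)$ leaves $C_n$. Under that extra hypothesis the proof would become: (i) identify a region $R\subset C_n\cap\sl(n)\cap\R^n_{\ge 0}$ that one composite step maps into itself and on which $\product(q)\ge 1$ (equivalently, the geometric estimate) holds, using $p^k\ge a$ and the ordering of the components; and (ii) show the iterates enter $R$ — numerically after a single step — so that monotonicity follows by induction from $k=1$ onward. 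Establishing such an algorithm-preserved invariant, rather than the pointwise Minkowski step, is where the real work lies; a soft coercivity-plus-strict-convexity argument (for $\product(a)<1$ the minimizer is unique) may be needed to pin $R$ down as a neighbourhood of that minimizer.
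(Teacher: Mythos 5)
You were asked to prove what is, in the paper, only a conjecture: the authors state it without proof and offer only numerical evidence, so there is no reference argument to compare against. Your proposal does not prove it either --- but the counterexample you give in passing is, as far as I can check, a valid \emph{disproof} of the conjecture as literally stated. Take $n=3$, $p^0 = (2,2,\tfrac14)^T \in \sl(3) \cap \R^3_{\ge 0}$ (a legal input to Algorithm~\ref{alg:composite_step}) and $a = \varepsilon\mathbbm{1}$ with $\varepsilon>0$ small, so $\product(a) = \varepsilon^3 < 1$. Then $\dist{a}(p^0) \to \tfrac12\norm{p^0}_2^2 = \tfrac{129}{32}$ as $\varepsilon\to 0$, while $\product\big(a + t(p^0)^{-1}\big) \to t^3\product\big((p^0)^{-1}\big) = t^3$ forces $t^0 \to 1$, hence $p^1 = a + t^0(p^0)^{-1} \to (p^0)^{-1} = (\tfrac12,\tfrac12,4)^T$ and $\dist{a}(p^1) \to \tfrac12\norm{(p^0)^{-1}}_2^2 = \tfrac{33}{4} > \tfrac{129}{32}$. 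The energy strictly increases for all sufficiently small $\varepsilon$ (already at $\varepsilon = 10^{-2}$ one gets $\dist{a}(p^1)\approx 8.0 > 4.0 \approx \dist{a}(p^0)$). Since neither Lemma~\ref{lem:composite_step_energy_decrease} nor the conjecture restricts the initial iterate beyond $p^0\in\sl(n)$, this is a genuine counterexample, not merely a failure of a sufficient condition.

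Two points to sharpen. Your reduction to the estimate $(1-\sqrt[n]{\product(a)})\,\norm{p^{-1}}_2\le\norm{p-a}_2$ via $t^k\le t_{\max}$ and Minkowski is only sufficient, so its failure alone would settle nothing; the direct computation of $p^1$ above is what does, and it deserves to be the centerpiece rather than an aside. Also, the paper's numerical evidence is not in tension with this: for $a=\varepsilon\mathbbm{1}$, Algorithm~\ref{alg:initial_value} returns $p^0=\mathbbm{1}$ (the projection onto $T_\mathbbm{1}\sl(n)$ maps $a$ to $\mathbbm{1}$, radial scaling leaves it there, and the branch at line~\ref{line:exclude-p-1} is skipped), which is already the minimizer --- so the experiments never exercise a problematic starting point. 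Your proposed fix (restrict to initial iterates from Algorithm~\ref{alg:initial_value}, or to a subcone of $C_n$ preserved by the iteration) is a plausible reformulation, but, as you concede, it is not proved; exhibiting the right algorithm-preserved invariant is precisely what remains open.
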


The consequence of this monotonicity is that all iterates are contained in the set
\begin{equation*}
 \mathcal{L}_0
 \colonequals
 \Big\{ x \in \sl(n) \cap \R_{\ge 0}^n \; : \; \dist{a}(x) \le \dist{a}(p^0)\Big\}.
\end{equation*}

We now show that accumulation points of the joint sequence $(p^k, -t^k)$ of iterates
and negative step lengths are stationary points
of $\dist{a}$ on $\sl(n)$.  Note that Lemmas~\ref{lem:composite_step_step_size_nonpositive}
and~\ref{lem:composite_step_energy_decrease}
together with the coercivity of~$\dist{a}$ imply that there is at least
one accumulation point.
We stress that our proof only covers the case $\product(a) \ge 1$,
but that we observe convergence experimentally also for $0 < \product(a) < 1$.

\begin{theorem}\label{thm:convergence-monotone-descent}
 Let $\product (a) \ge 1$, and for each \(p^k\) generated by Algorithm~\ref{alg:composite_step},
 let \(t^k\in\R\) be the corresponding step length.  Then the sequence $(p^k, t^k)$
 fulfills
	\begin{equation*}
  \lim_{k\to\infty} \norm[\big]{ p^k - a + (- t^k) (p^k)^{-1} } = 0.
	\end{equation*}
 In particular, any convergent subsequence converges to a solution of the stationarity equations \eqref{eq:lagrange_vector_formulation}, with $-t^k$ converging to $\lambda$.
\end{theorem}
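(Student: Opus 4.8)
The plan is to first notice that the quantity to be driven to zero is nothing but the step of the algorithm: since $p^{k+1} = a + t^k (p^k)^{-1}$ by construction (Algorithm~\ref{alg:composite_step}), we have $p^k - a + (-t^k)(p^k)^{-1} = p^k - p^{k+1}$, so the assertion reduces to $\norm{p^k - p^{k+1}} \to 0$. The case $\product(a)=1$ is degenerate --- then $a\in\sl(n)\cap\R^n_{>0}$, the bisection returns $t^k=0$, and $p^k=a$ for all $k\ge 1$ --- so I would assume $\product(a)>1$ throughout. Then $a$ lies in the interior of the strictly convex body $\sl^+(n)\cap\R^n_{\ge 0}$ (Lemma~\ref{lemma:G+-is-convex}), and since $\sl(n)\cap\R^n_{\ge 0}$ is closed and does not contain $a$, the number $\delta \colonequals \operatorname{dist}\big(a,\sl(n)\cap\R^n_{\ge 0}\big)$ is strictly positive; in particular $\norm{p^{k+1}-a}\ge\delta$ for every $k$.

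The next step is to split the per-step energy decrease along the two substeps of the composite-step reformulation. Let $p^{k+\frac12}$ be the orthogonal projection of $a$ onto the affine tangent plane $p^k + T_{p^k}\sl(n)$, which is also the foot of the perpendicular from $p^k$ onto the normal line $\{a+t(p^k)^{-1}\}$ on which $p^{k+1}$ lies. Since $p^k$ and $p^{k+\frac12}$ both lie in that plane while $a - p^{k+\frac12}$ is orthogonal to it, Pythagoras gives $\dist{a}(p^k) - \dist{a}(p^{k+\frac12}) = \tfrac12\norm{p^k - p^{k+\frac12}}^2$. For the normal substep, $p^{k+1}$ lies between $p^{k+\frac12}$ and $a$ on that line --- as already used in the proof of Lemma~\ref{lem:composite_step_energy_decrease}, a fact resting on the strict monotonicity of $t\mapsto\product(a+t(p^k)^{-1})$ from Lemma~\ref{lem:composite_step_step_size_nonpositive} together with $p^{k+\frac12}\notin\sl^+(n)\cap\R^n_{\ge 0}$ (unless $p^{k+\frac12}=p^k$) --- so $\dist{a}(p^{k+\frac12}) - \dist{a}(p^{k+1}) = \tfrac12\big(\norm{p^{k+\frac12}-a}^2 - \norm{p^{k+1}-a}^2\big)\ge 0$. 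Adding the two, the $p^{k+\frac12}$ terms cancel, and summing over $k$ --- using that $\big(\dist{a}(p^k)\big)_k$ is monotone decreasing (Lemma~\ref{lem:composite_step_energy_decrease}) and bounded below by zero, hence convergent --- forces both nonnegative summands to tend to zero.

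It then remains to turn these two limits into $\norm{p^k-p^{k+1}}\to 0$. The first gives $\norm{p^k - p^{k+\frac12}}\to 0$ at once. For the second, collinearity of $p^{k+\frac12}$, $p^{k+1}$, $a$ with $p^{k+1}$ in the middle yields $\norm{p^{k+\frac12}-a}^2 - \norm{p^{k+1}-a}^2 = \norm{p^{k+\frac12}-p^{k+1}}\cdot\big(\norm{p^{k+\frac12}-a} + \norm{p^{k+1}-a}\big)\ge \delta\,\norm{p^{k+\frac12}-p^{k+1}}$, hence $\norm{p^{k+\frac12}-p^{k+1}}\to 0$, and the triangle inequality closes the estimate. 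For the ``in particular'' part: if $p^{k_j}\to p^*$ along a subsequence, closedness gives $p^*\in\sl(n)\cap\R^n_{\ge 0}$, and $\product(p^*)=1$ forces $p^*>0$ componentwise, so $(p^{k_j})^{-1}\to(p^*)^{-1}\neq 0$; combining the main limit with $p^{k_j}\to p^*$ gives $(-t^{k_j})(p^{k_j})^{-1}\to a-p^*$, and reading off a single nonzero coordinate shows $-t^{k_j}\to\lambda$ for some $\lambda\in\R$; then $(-t^{k_j})(p^{k_j})^{-1}\to\lambda(p^*)^{-1}$, so $a-p^*=\lambda(p^*)^{-1}$, which together with $\product(p^*)=1$ is precisely the stationarity system~\eqref{eq:lagrange_vector_formulation}.

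The part I expect to require the most care is the normal substep: one must make sure $p^{k+1}$ genuinely lies on the segment between $p^{k+\frac12}$ and $a$ (which is where $\product(a)>1$ and the monotonicity lemma enter) and that the factor $\norm{p^{k+\frac12}-a} + \norm{p^{k+1}-a}$ stays bounded away from zero (which is where the strict convexity of $\sl^+(n)\cap\R^n_{\ge 0}$, via $\delta>0$, is indispensable --- and also exactly where the hypothesis $\product(a)\ge 1$ cannot be dropped). Everything else is Pythagoras, a telescoping sum, and continuity of the componentwise inverse.
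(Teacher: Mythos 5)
Your proof is correct, and it takes a genuinely simpler route than the paper's. Both arguments start from the monotone convergence of $\dist{a}(p^k)$ and the splitting into tangent and normal substeps, but then diverge sharply. The paper bounds $\norm{p^k - p^{k+1}}$ in one go via the Lie exponential, writing $p^{k+1} = p^k \exp v$, using the strong convexity of $E(\xi) = \sum_i \exp\xi_i$ on a ball (\cref{lem:sumexp}) to convert the normal-direction gap $\norm{p^{k+\frac12}-p^{k+1}}$ into a lower bound on $\norm{v}^2$, and then a Lipschitz estimate to get $\norm{p^k-p^{k+1}} \le K\sqrt{\norm{p^{k+\frac12}-p^{k+1}}}$. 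You avoid all of that by noticing two things the paper does not exploit: (i) the tangent substep is an orthogonal projection, so Pythagoras gives the \emph{exact} identity $\dist{a}(p^k) - \dist{a}(p^{k+\frac12}) = \tfrac12\norm{p^k - p^{k+\frac12}}^2$, which controls the tangent displacement directly; and (ii) on the normal ray the difference of squared distances factorizes as $(R-r)(R+r)$ with $R+r \ge \delta \colonequals \operatorname{dist}\bigl(a,\sl(n)\cap\R^n_{\ge 0}\bigr) > 0$ when $\product(a)>1$, which controls the normal displacement. A triangle inequality then finishes without ever leaving Euclidean geometry. The one place your argument is slightly less uniform is $\product(a)=1$, where $\delta=0$ and you must observe separately that the iteration collapses to $p^k = a$ after one step; the paper's Lie-exponential constant $K$ does not degenerate there, so that proof handles the boundary case without a case split. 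In exchange, your constants are explicit ($\delta^{-1}$ and $1$) rather than a compactness-derived $K$ depending on $\mathcal{L}_0$ and the Lipschitz constant of $\exp$, and you dispense with \cref{lem:sumexp} and the bound $\norm{v}=\norm{\log(p^{k+1}(p^k)^{-1})}\le\rho$ entirely. The ``in particular'' part is handled the same way in spirit (closedness, positivity of $p^*$, continuity of the componentwise inverse), and your reasoning there is sound.
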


For the proof, we need more refined information about the function
\begin{equation*}
 E : \R^n \to \R,
 \qquad
 E(x) = \sum_{i=1}^n \exp x_i
\end{equation*}
introduced in Chapter~\ref{sec:logarithmic_distance}.
Strict convexity, coercivity, and the existence of a unique minimizer at $x=0$
have been shown in Lemma~\ref{lem:hyperbolic_cosine_is_convex}.
We can show even more when considering only the restriction of $E$
to a bounded domain.

\begin{lemma}
 \label{lem:sumexp}
 Let $B_\rho$ be the closed Euclidean unit ball of radius $\rho > 0$, and
 define
 \begin{equation}
 \label{eq:sumexp_min_eigenvalue}
  m_\textup{min} \coloneqq \min_{\substack{x \in B_\rho\\ i=1,\dots,n}} \exp x_i.
 \end{equation}
 Then
 \begin{equation*}
  \norm{y}^2 \le \frac{2}{m_\textup{min}} (E(y)-n).
 \end{equation*}
 for all $y \in B_\rho$.
\end{lemma}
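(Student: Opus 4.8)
The plan is to bound $E(y) - n$ from below by a quadratic in $\norm{y}$, exploiting the fact that on the bounded ball $B_\rho$ the Hessian $\nabla^2 E = \diag(\exp x_1, \dots, \exp x_n)$ is uniformly bounded below by $m_\textup{min} \, \textup{Id}$ in the positive-definite order. This is a Taylor-expansion argument around the minimizer $x = 0$.

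First I would recall from Lemma~\ref{lem:hyperbolic_cosine_is_convex} that $E$ has its unique minimizer at $x=0$, where $E(0) = n$ and $\nabla E(0) = (1,\dots,1)^T$. However, since we want a clean quadratic bound, the cleanest route is a second-order Taylor expansion with integral remainder: for any $y \in B_\rho$,
\begin{equation*}
 E(y) = E(0) + \langle \nabla E(0), y \rangle + \int_0^1 (1-s)\, \big\langle \nabla^2 E(sy)\, y, y \big\rangle \, ds.
\end{equation*}
Here $E(0) = n$. For the linear term, $\langle \nabla E(0), y\rangle = \sum_i y_i$, which need not be nonnegative in general — so a direct expansion around $0$ alone does not immediately give the bound unless $y$ is known to lie in $V$. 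I would therefore check the intended scope: since this lemma is used in the proof of Theorem~\ref{thm:convergence-monotone-descent} where $E$ is evaluated at arguments of the form $p^{k+1} - a + (-t^k)(p^k)^{-1}$, it is very likely that the relevant $y$ are constrained to the subspace $V = \{\sum_i y_i = 0\}$; on $V$ the linear term vanishes and one is left with $E(y) - n = \int_0^1 (1-s)\langle \nabla^2 E(sy) y, y\rangle\, ds$. On that subspace, using $sy \in B_\rho$ whenever $y \in B_\rho$ and $s \in [0,1]$ (since $\norm{sy} \le \norm{y} \le \rho$), we get $\langle \nabla^2 E(sy) y, y\rangle = \sum_i \exp((sy)_i)\, y_i^2 \ge m_\textup{min} \norm{y}^2$, hence
\begin{equation*}
 E(y) - n \ge m_\textup{min} \norm{y}^2 \int_0^1 (1-s)\, ds = \frac{m_\textup{min}}{2} \norm{y}^2,
\end{equation*}
which rearranges to exactly $\norm{y}^2 \le \frac{2}{m_\textup{min}}(E(y) - n)$.

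The main obstacle is pinning down the precise hypothesis: as stated, the lemma is false for arbitrary $y \in B_\rho$ (take $n=1$, or more generally $y$ with $\sum_i y_i < 0$, where $E(y)$ can be less than $n$ even though $\norm y > 0$, making the right-hand side negative). So the essential step is recognizing that the statement is implicitly restricted to (or only applied on) the constraint set $V$, and the proof should either say so explicitly or — if the authors really do mean all of $B_\rho$ — additionally assume $\rho$ small and absorb the linear term, which does not work for large $\rho$. I would write the proof assuming $y \in B_\rho \cap V$ and note this is the case of interest; the remaining computation is the elementary Taylor estimate above, with the only quantitative input being the uniform lower bound $m_\textup{min}$ on the diagonal Hessian entries over the compact ball $B_\rho$, which is positive because $\exp$ is positive and continuous and $B_\rho$ is compact.
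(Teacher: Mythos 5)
Your proof is correct and, in substance, takes the same route as the paper: bound $E(y)-n$ from below by $\frac{m_\text{min}}{2}\norm{y}^2$ using that the Hessian $\diag(\exp x_1,\dots,\exp x_n)$ is uniformly $\succeq m_\text{min}\,\identity$ on the compact ball $B_\rho$. The paper phrases this via the strong-convexity inequality
$E(y)-n \ge E(x)-n + \nabla(E(x)-n)^T(y-x) + \frac{m_\text{min}}{2}\norm{y-x}^2$
specialized to $x=0$, while you use Taylor with integral remainder; these are interchangeable.

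The valuable part of your proposal is the observation about the linear term. You are right that the inequality, read for arbitrary $y\in B_\rho\subset\R^n$, is false (e.g.\ $y=(-\varepsilon,\dots,-\varepsilon)$ gives $E(y)-n<0$ while $\norm{y}^2>0$), and that the statement is implicitly restricted to $y\in V\cap B_\rho$. The paper's own proof silently drops $\nabla(E(0)-n)^T(y-0)=\sum_i y_i$ by invoking ``the global minimizer $x=0$'' — but $0$ is only the minimizer of $E$ restricted to $V$ (Lemma~\ref{lem:hyperbolic_cosine_is_convex}), not of $E$ on $\R^n$; the term vanishes precisely because $\nabla E(0)=\mathbbm{1}\perp V$ and the intended $y$ lies in $V$. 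In the only application (proof of Theorem~\ref{thm:convergence-monotone-descent}) the argument is $v=\log\bigl(p^{k+1}(p^k)^{-1}\bigr)$, which does satisfy $\sum_i v_i=0$ because $p^{k+1}(p^k)^{-1}\in\sl(n)$, so the result as used is correct. Your proof makes this hypothesis explicit, which the paper should have done as well.
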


\begin{proof}
 The Hesse matrix of $(E(\cdot) -n)$ at $x$ is
 $\operatorname{diag}\big(\exp x_1,\dots,\exp x_n\big)$.
 By construction, $m_\text{min} > 0$ bounds all eigenvalues from below on $B_\rho$.
 Hence, $(E(\cdot)-n)$ is strongly convex on $B_\rho$.
 As shown in~\cite[Chapter~9.1.2]{boyd_vandenberghe:2004},
 this strong convexity can be reformulated to
 \begin{equation*}
  E(y)-n \ge E(x) -n + \nabla (E(x)-n)^T(y-x) + \frac{m_\text{min}}{2}\norm{y-x}^2
  \qquad \forall x,y \in B_\rho.
 \end{equation*}
 In particular, for the global minimizer $x = 0$, we get
 \begin{equation*}
  E(y)-n \ge \underbrace{E(0)-n}_{=0} + \frac{m_\text{min}}{2}\norm{y}^2
  \qquad \forall y \in B_\rho.
 \end{equation*}
 This is the assertion.
\end{proof}

\begin{proof}[Proof of Theorem~\ref{thm:convergence-monotone-descent}]
 First, note that the sequence $(\dist{a}(p^k))$ is convergent,
 because it is monotone decreasing and bounded from below.
 This implies that for each $\varepsilon>0$ there is a $k_0\in\N$ with
 \begin{equation*}
  0 \le \dist{a}(p^k) - \dist{a}(p^{k+1}) < \frac{1}{2}\varepsilon^2
  \qquad \forall k>k_0.
 \end{equation*}
 Since $\product(a) \ge 1$ we have $\dist{a}(p^{k+1}) \leq \dist{a}(p^{k+\frac12}) \leq \dist{a}(p^k)$, and hence also
 \begin{equation}\label{eq:q^i-minus-p^i+1}
  0 \le \dist{a}(p^{k+\frac12}) - \dist{a}(p^{k+1}) < \frac{1}{2}\varepsilon^2
  \qquad \forall k>k_0.
 \end{equation}
 Monotonicity of the square function implies
 \begin{equation*}
  0 \le \norm{p^{k+\frac12} - a} - \norm{p^{k+1} - a} < \varepsilon
  \qquad \forall k>k_0,
 \end{equation*}
 and since $p^{k+\frac{1}{2}}$ and $p^{k+1}$ are on the same ray from the point $a$
 we get
 \begin{equation*}
  \norm{p^{k+\frac{1}{2}} - p^{k+1}} < \varepsilon
  \qquad
  \text{for all $k > k_0$.}
 \end{equation*}
 The rest of the proof now shows that this can be used to bound
 $\norm{p^k - p^{k+1}}$, more specifically that there is a constant $K > 0$ such that
 for all $p^k \in \mathcal{L}_0$
 \begin{equation}
 \label{eq:sqrt_bound}
  \norm{p^k - p^{k+1}}
  \le
  K
  \sqrt{ \norm{p^{k+\frac12} - p^{k+1}} },
 \end{equation}
 which implies that
 \begin{equation*}
  \norm{p^k - p^{k+1}} \to 0.
 \end{equation*}
 This yields the assertion, because
 inserting the definition of $p^{k+1} \colonequals a + t^k (p^k)^{-1}$ yields
 \begin{equation*}
  \norm[\big]{p^k - a - t^k (p^k)^{-1}} \to 0.
 \end{equation*}

 The proof of~\eqref{eq:sqrt_bound} uses the Lie exponential $\exp$~\eqref{eq:exponential_map},
 which maps $T_\mathbbm{1} \sl(n)$ to $\sl(n) \cap \R^n_{\ge 0}$.
 The main idea of the proof is that since $\exp$ is an isomorphism,
 there is a $v \in T_\mathbbm{1} \sl(n)$ such that
 \begin{equation*}
  p^{k+1}
  =
  p^k \exp v,
 \end{equation*}
 and we can bound the distance from $p^{k+\frac 12}$ to $p^{k+1}$ from below
 in terms of the tangent vector $v$.  To do so, note that by construction,
 the point $p^{k+\frac 12}$ is the orthogonal projection of $p^{k+1}$ onto $T_{p^k} \sl(n)$.
 Hence, the distance between the two points is equal to the distance between $p^{k+1}$
 and the space $T_{p^k} \sl(n)$.  Using that $\frac{(p^k)^{-1}}{\norm{(p^k)^{-1}}}$ is a unit normal
 of $T_{p^k} \sl(n)$, that distance is
 \begin{align*}
  \norm[\big]{p^{k+\frac12} - p^{k+1}}
  & =
  \operatorname{dist} \big(p^{k+1}, T_{p^k}\sl(n) \big)
  \\
  & =
  \bigg \langle p^k \exp v - p^k, \frac{(p^k)^{-1}}{\norm{(p^k)^{-1}}} \bigg \rangle
  \\
  & =
  \bigg \langle \exp v - \mathbbm{1}, \frac{\mathbbm{1}}{\norm{(p^k)^{-1}}} \bigg \rangle
  \\
  & =
  \frac{1}{\norm{(p^k)^{-1}}} \Big( \sum_{i=1}^n \exp v_i - n \Big)
  \\
  & =
  \frac{1}{\norm{(p^k)^{-1}}} \big(E(v)-n\big).
 \end{align*}

 As the logarithm (the inverse exponential map) is continuous and $\mathcal{L}_0$ is compact,
 we can infer that there is a constant $\rho > 0$ such that
 $\norm{v} = \norm{\log (p^{k+1}(p^k)^{-1})} \le \rho$
 for all $p^k, p^{k+1} \in \mathcal{L}_0$.
 This allows to use the strong convexity of $E$ on bounded sets to get a lower bound
 of $\norm{p^{k+\frac12} - p^{k+1}}$ in terms of $\norm{v}$. Let $m_\text{min}$
 be the constant defined in~\eqref{eq:sumexp_min_eigenvalue} for the ball $B_\rho$.
 Then Lemma~\ref{lem:sumexp} yields
 \begin{align}
  \label{eq:vertical_bound}
  \norm[\big]{p^{k+\frac12} - p^{k+1}}
  \ge
  \frac{1}{\norm{(p^k)^{-1}}} \frac{m_\text{min}}{2} \norm{v}^2.
 \end{align}

 We can now bound the desired quantity $\norm[\big]{p^k-p^{k+1}}$.
 Let $L$ be the Lipschitz constant of $\exp$ as a map $\R^n \to \R^n$
 for the bounded set $B_\rho$, and let $p^k_\text{max} \colonequals \max_{i=1,\dots,n} p_i^k$.
 Then
 \begin{align*}
  \norm[\big]{p^k - p^{k+1}}
  & =
  \norm[\big]{p^k \big( \exp 0 - \exp v \big)}
  \\
  & \le
  p^k_\text{max} \norm[\big]{\exp 0 - \exp v}
  \\
  & \le
  p^k_\text{max} L \norm{0 - v}.
 \end{align*}
 Finally, combining this with~\eqref{eq:vertical_bound} we get
 \begin{align*}
  \norm{p^k - p^{k+1}}
  & \le
  p^k_\text{max} L \norm{v}
  \\
  & \le
  p^k_\text{max} L\sqrt{\frac{2 \norm{(p^k)^{-1}}}{m_\text{min}} \norm{p^{k+\frac12} - p^{k+1}}}.
 \end{align*}
 Since $p^k$ is contained in the bounded set $\mathcal{L}_0$ for all $k$,
 both $p^k_\text{max}$ and $\norm{(p^k)^{-1}}$ are bounded independently of $k$.
 Hence, there is a constant $K$ such that
 \begin{equation*}
  \norm{p^k - p^{k+1}}
  =
  K \sqrt{\norm{p^{k+\frac12}-p^{k+1}}},
 \end{equation*}
 and the assertion is shown.
\end{proof}

\begin{remark}
 The composite-step algorithm can also be formulated in the space of matrices.
 Indeed, given  a $P^k \in \SL(n)$, a normal vector to the tangent space there is,
 by Jacobi's formula,
 \begin{equation*}
  \nabla \det(P^k)
  =
  \det P^k (P^k)^{-T}
  =
  (P^k)^{-T}.
 \end{equation*}
 The update step of the algorithm is then a line search on the ray from $A$ in the direction of
 $(P^k)^{-T}$, and
 \begin{equation*}
  P^{k+1}
  =
  A + t^k(P^k)^{-T}.
 \end{equation*}

 Interestingly, this matrix form of the algorithm produces exactly the same iterates as the one in $\R^n$,
 modulo multiplication with orthogonal matrices.  For an induction proof, assume that
 $A$ and $P^k$ can be diagonalized by the same pair of orthogonal matrices
 $A = U \Sigma_A V^T$ and $P^k = U \Sigma_{P^k} V^T$, with $\Sigma_A$ and $\Sigma_{P_k}$ diagonal.
 Then $(P^k)^{-T} = U \Sigma_{P^k}^{-1} V^T$,
 and therefore $P^{k+1}$ is also diagonalized by the pair of matrices $U,V^T$.

 Using the algorithm in matrix space avoids the costly initial singular value decomposition.
 However, the line search for the step size $t^k$ then involves computing determinants of
 $n \times n$-matrices, which has similar cost. Therefore, the matrix form
 of the algorithm remains an elegant curiosity with little practical value.
\end{remark}

\subsection{Newton method with eliminated constraints}
\label{sec:newton_without_constraints}

In Chapter~\ref{sec:global_minimizer} we have introduced $n$-dimensional hyperbolic
coordinates.  They allowed to formulate minimizing $\dist{a}$ in $\sl(n) \cap \R^n_{\ge 0}$
without any constraint at all.
We even located the global minimizer in the positive orthant of these coordinates.
This suggests to use a standard descent method for unconstrained
smooth functionals such as the Newton method, with a
suitable line search criterion.
Unfortunately, plots of the energy landscape in hyperbolic coordinates
(such as Figure~\ref{fig:da_hyp_3d}) indicate that the problem may become
very ill-conditioned. The practical effects of this are studied in
Chapter~\ref{sec:experiments} below.

In pseudocode, a Newton algorithm for minimizing the squared distance to $a$
in hyperbolic coordinates has the following general form:

\begin{algorithm}[H]
    \label{alg:conforming_descent}
    \caption{Newton method with eliminated constraints}

    \SetKwInOut{Input}{input}
    \DontPrintSemicolon

    \Input {$a \in \R^n_{\ge 0}$, initial iterate $\bar{\zeta}^0 \in \R^{n-1}$}
    \BlankLine

    \ForEach{$k=0,1,2,\dots$}
    {
      \CommentSty{Compute Hesse matrix:} \;
      $H^k \leftarrow \nabla^2 \disthyp{a}(\bar{\zeta}^k)$ \;
      \If{$H^k$ not positive definite}
      {
        Modify $H^k$ \;
        \label{algline:newton_hesse_modification}
      }
      \CommentSty{Compute Newton direction and step length:} \;
      $c^k \leftarrow - (H^k)^{-1} \nabla \disthyp{a}(\bar{\zeta}^k)$ \;
      \label{algline:newton_correction}
      $\eta^k \leftarrow \text{step length}$ \;
      \CommentSty{Apply update:} \;
      $\bar{\zeta}^{k+1} \leftarrow \bar{\zeta}^k + \eta^k c^k$ \;
    }
    \CommentSty{Transform last iterate to Euclidean coordinates:} \;
    $p \leftarrow \exp (\bar{B}\bar{\zeta})$ \;
\end{algorithm}

Local quadratic convergence of this algorithm to a local minimizer
is a classic result and shown, e.g., in~\cite[Thm.\,3.5]{nocedal_wright:2006}.
This result assumes that the Hesse matrix $\nabla^2 \disthyp{a}$ is positive definite
in a neighborhood of the minimizer, and therefore does not require
the Hesse matrix modification step in Line~\ref{algline:newton_hesse_modification}.

Unfortunately, $\disthyp{a}$ is not strictly convex away from minimizers,
and therefore indefinite or singular Hesse matrices $\nabla^2 \disthyp{a}$
cannot be ruled out.
To see how the Hesse matrix can become singular, we note that at an iterate $\bar{\zeta}^k$
it is
\begin{equation*}
 \nabla^2 \disthyp{a}(\bar{\zeta}^k)
 =
 \bar{B}^T D^\text{hyp}(\bar{\zeta}^k) \bar{B},
\end{equation*}
where $\bar{B} \in \R^{n \times (n-1)}$ is the matrix formed by
the left $n-1$ columns of the transformation matrix $B$
defined in~\eqref{eq:hyperbolic_transformation_matrix}, and
\begin{equation}
\label{eq:def_of_d_hyp}
 D^\text{hyp}(\bar{\zeta}^k)
 \colonequals
 \operatorname{diag}_{i=1,\dots,n} \Big(\exp (\bar{b}_{i*} \bar{\zeta}^k) \big(2 \exp (\bar{b}_{i*} \bar{\zeta}^k) -a_i\big) \Big).
\end{equation}
The Hesse matrix
will turn singular if at least two entries of the diagonal factor vanish,
i.e., if
\begin{equation}
\label{eq:diagonal_zero}
 2 \exp (\bar{b}_{i*} \bar{\zeta}^k)
 =
 a_i
\end{equation}
for at least two indices.%
\footnote{
Note that in Cartesian coordinates the condition is simply $2p^k_i = a_i$.
}
(One index is not sufficient because the range
of $\bar{B}$ is the orthogonal complement of $\mathbbm{1}$, and therefore
cannot contain elements of the kernel of $D^\text{hyp}$ if only one diagonal
entry is zero.)
However, the Hesse matrix may be
singular even if $D^\text{hyp}(\bar{\zeta}^k)$ is regular! To see this
note that $\operatorname{ker} \bar{B}^T = \mathbbm{1}$, and that (as just mentioned)
the range of $\bar{B}$ is $\mathbbm{1}^\perp$, the orthogonal
complement of $\mathbbm{1}$. Hence the Hesse matrix is singular
if the range of the diagonal factor $D^\text{hyp}$ on $\mathbbm{1}^\perp$
contains a multiple of $\mathbbm{1}$, which happens if
\begin{equation}
\label{eq:trace_condition_hyperbolic}
 \Big\langle \mathbbm{1} \big(D^\text{hyp}(\bar{\zeta}^k)\big)^{-1}, \mathbbm{1} \Big\rangle
 =
 \sum_{i=1}^n \frac{1}{\exp (\bar{b}_{i*} \bar{\zeta}^k) (2 \exp (\bar{b}_{i*} \bar{\zeta}^k) -a_i)}
 =
 0.
\end{equation}
Possible modifications of the Hesse matrix to make it positive definite
are discussed in~\cite[Chapter\,3.4]{nocedal_wright:2006}.

\subsection{Newton method with explicit linear constraint}
\label{sec:newton_with_constraints}

The Newton matrices $\nabla^2 \disthyp{a}$ of
the previous section are dense, and therefore computing the Newton corrections $c^k$
will be expensive unless $n$ is small.
We can, however, exploit the additive structure of the original energy functional
to speed up the algorithm.
For this we go back to the formulation in the logarithmic coordinates
of Chapter~\ref{sec:logarithmic_coordinates}, where the problem was to find
a $\xi \in \R^n$ such that
\begin{align*}
 \min_{\xi \in \R^n} & \distlog{a}(\xi)
 \qquad \qquad
 \distlog{a}(\xi) \colonequals \frac{1}{2} \sum_{i=1}^n (\exp \xi_i - a_i)^2\\
 \xi_1 + \xi_2 + & \dots + \xi_n = 0.
\end{align*}
Given an admissible iterate $\xi^k \in \R^n$, i.e., $\sum_{i=1}^n \xi_i^k = 0$,
the constrained Newton method
computes a correction $c^k \in \R^n$ such that~\cite{boyd_vandenberghe:2004}
\begin{equation*}
 c^k
 =
 \argmin_{c_1 + \dots + c_n = 0}
    c^T \nabla \distlog{a}(\xi^k) + \frac{1}{2} c^T \nabla^2 \distlog{a}(\xi^k) c.
\end{equation*}
The first-order optimality condition of this is
\begin{equation}
\label{eq:constrained_newton_system}
 \begin{pmatrix}
  & & & 1 \\
  & \nabla^2 \distlog{a}(\xi^k) & & \vdots \\
  & & & 1 \\
  1 & \cdots & 1 & 0
 \end{pmatrix}
 \begin{pmatrix}
  c^k \\ w^k
 \end{pmatrix}
 =
 \begin{pmatrix}
  - \nabla \distlog{a}(\xi^k) \\
  0
 \end{pmatrix},
\end{equation}
where $w^k \in \R$ is the Lagrange multiplier of the constraint.
Since
\begin{equation*}
 \nabla^2 \distlog{a}(\xi)
 =
 \operatorname{diag}_{i=1,\dots,n} \big(\exp \xi_i (2 \exp \xi_i -a_i) \big)
\end{equation*}
is diagonal, the matrix in~\eqref{eq:constrained_newton_system} is an arrowhead matrix.
Such a matrix can be inverted in linear time, as can be verified
by direct computation.

\begin{lemma}[Inverse of arrowhead matrix]
\label{lem:arrowhead_inversion}
 If $M = \begin{psmallmatrix} D & z \\ z^T & 0 \end{psmallmatrix}$ with
 diagonal, invertible $D$ and $z = (1,\dots,1)^T$, then
 \begin{equation*}
  M^{-1}
  =
  \begin{pmatrix}
   D^{-1} & 0 \\
   0 & 0
  \end{pmatrix}
  +
  \rho u u^T,
 \end{equation*}
 where
 \begin{equation*}
  u = \begin{pmatrix} \operatorname{diag}(D^{-1}) \\ -1 \end{pmatrix}
  \qquad \text{and} \qquad
  \rho = - \operatorname{tr}(D^{-1})^{-1}.
 \end{equation*}
\end{lemma}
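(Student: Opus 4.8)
The plan is to verify the claimed formula by a single direct block multiplication, which at the same time proves that $M$ is invertible. Write $z = (1,\dots,1)^T$, abbreviate $d \colonequals \diag(D^{-1}) = D^{-1}z$ (the vector of diagonal entries of $D^{-1}$), so that $u = \begin{psmallmatrix} d \\ -1 \end{psmallmatrix}$, and let $N$ denote the matrix on the right-hand side of the asserted identity. The goal is to show $MN = I$; since $M$ is square this yields both invertibility of $M$ and $N = M^{-1}$.

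First I would compute the two pieces of $MN$ separately. For the block-diagonal part, using that $D$ is diagonal one gets $z^T D^{-1} = d^T$ and hence
\begin{equation*}
 M \begin{pmatrix} D^{-1} & 0 \\ 0 & 0 \end{pmatrix}
 =
 \begin{pmatrix} I & 0 \\ d^T & 0 \end{pmatrix}.
\end{equation*}
For the rank-one part I would first evaluate $Mu$, obtaining
\begin{equation*}
 M u
 =
 \begin{pmatrix} D D^{-1} z - z \\ z^T D^{-1} z \end{pmatrix}
 =
 \begin{pmatrix} 0 \\ \operatorname{tr}(D^{-1}) \end{pmatrix},
\end{equation*}
where the last entry uses $z^T D^{-1} z = \sum_{i=1}^n (D^{-1})_{ii} = \operatorname{tr}(D^{-1})$. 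Note in passing that for $M$ to be invertible at all we must have $\operatorname{tr}(D^{-1}) \neq 0$, which is exactly the condition under which $\rho$ is defined.

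The final step is to assemble $MN = M \begin{psmallmatrix} D^{-1} & 0 \\ 0 & 0 \end{psmallmatrix} + \rho (Mu) u^T$ and use $\rho\,\operatorname{tr}(D^{-1}) = -1$:
\begin{align*}
 MN
 & =
 \begin{pmatrix} I & 0 \\ d^T & 0 \end{pmatrix}
 +
 \rho \begin{pmatrix} 0 \\ \operatorname{tr}(D^{-1}) \end{pmatrix} \begin{pmatrix} d^T & -1 \end{pmatrix}
 \\
 & =
 \begin{pmatrix} I & 0 \\ d^T & 0 \end{pmatrix}
 +
 \begin{pmatrix} 0 & 0 \\ -d^T & 1 \end{pmatrix}
 =
 \begin{pmatrix} I & 0 \\ 0 & 1 \end{pmatrix},
\end{align*}
which is the identity. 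There is essentially no obstacle in this argument; the only thing to be careful about is the bookkeeping of the rank-one correction and the sign of $\rho$. As an alternative derivation (rather than verification), one can instead solve $M\,(x,t)^T = (b,s)^T$ directly: the first block row $Dx + tz = b$ gives $x = D^{-1}b - t\,D^{-1}z$, and substituting into $z^T x = s$ yields $t = (z^T D^{-1}b - s)/\operatorname{tr}(D^{-1})$; reading off the coefficients of $(b,s)$ in the resulting expressions for $x$ and $t$ produces exactly the stated $M^{-1}$.
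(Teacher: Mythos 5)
Your proof is correct and is exactly the "direct computation" the paper alludes to (the paper gives no more detail than that): multiplying out $MN$ block by block and using $D^{-1}z = \diag(D^{-1})$, $z^TD^{-1}z = \operatorname{tr}(D^{-1})$, and $\rho\,\operatorname{tr}(D^{-1}) = -1$ yields the identity. Your observation that $\operatorname{tr}(D^{-1})\neq 0$ is forced (else $Mu=0$ and $M$ is singular) is a sensible clarification of the lemma's implicit hypothesis.
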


We arrive at an algorithm that looks essentially like Algorithm~\ref{alg:conforming_descent},
but with the inverse of the Newton matrix replaced by \cref{lem:arrowhead_inversion}.
It is shown in~\cite[Chapter~10.2.3]{boyd_vandenberghe:2004}
that both methods produce the same sequence of iterates,
modulo the transformation from logarithmic to hyperbolic coordinates.
Therefore, the same convergence guarantees hold.

Just as in hyperbolic coordinates, the matrices of the Newton method
with explicit constraint may not be invertible, but it is easier here
to see when this happens. From \cref{lem:arrowhead_inversion}
we can see that the bordered Hesse matrix of~\eqref{eq:constrained_newton_system}
is singular if and only if $\nabla^2 \distlog{a}(\xi^k)$ is singular,
of if $\operatorname{tr}\big(\nabla^2 \distlog{a}(\xi^k)^{-1} \big) = 0$.
In components this means that
\begin{equation}
\label{eq:invertibility_logarithmic}
 2 \exp \xi_i^k = a_i
\end{equation}
for at least one $i \in \{1,\dots,n\}$, or
\begin{equation}
\label{eq:trace_condition_hyperbolic_2}
 \sum_{i=1}^n \frac{1}{\exp \xi^k_i (2 \exp \xi^k_i - a_i)}
 =
 0.
\end{equation}
The latter is exactly condition~\eqref{eq:trace_condition_hyperbolic}
from \cref{sec:newton_without_constraints}.
Condition~\eqref{eq:invertibility_logarithmic} is the same as~\eqref{eq:diagonal_zero},
but now even a single index~$i$ with $2\exp \xi_i^k = a_i$ will
make the Newton matrix singular.

Modifications of the Hesse matrix can be handled by Lemma~\ref{lem:arrowhead_inversion}
as long as they change the diagonal of the Hesse matrix only.

\section{Numerical experiments}
\label{sec:experiments}

We compare the performance of the algorithms numerically,
using implementations in the \textsc{Julia} language. We test with random matrices
of sizes $n = 2,3,4,8,16,32$, and $64$.

\subsection{Generation of test matrices}

For each matrix size we create four sets of random test matrices.
As simply picking matrix entries randomly quickly leads to matrices
with astronomically large determinants, we construct the matrices
in logarithmic space. For a given $\epsilon > 0$,
we first construct a matrix $T$ with uniformly distributed entries
from the interval $[-(\log \epsilon) / n, (\log \epsilon)/ n]$.
The trace of that matrix is between $- \log \epsilon$ and $\log \epsilon$,
and it follows the Bates distribution~\cite{johnson_kotz_balakrishnan:1995}.
Since the variance of this distribution decreases with increasing~$n$,
we additionally multiply the entries with the factor $\sqrt{n}$, which neutralizes this effect.
Using then $\det \exp T = \exp \operatorname{tr} T$, the matrix
\(A \colonequals \exp(T)\) is a suitable test matrix with determinant
in the interval $[\epsilon^{-\sqrt{n}}, \epsilon^{\sqrt{n}}]$.

For the tests here we pick the value $\epsilon = 100$ and construct four sets of 1000 matrices each, namely $\mathcal{A}^{\ge 1}$, $\mathcal{A}^{< 1}$, $\mathcal A^0$ and $\mathcal A^{\overline{C}}$.
The sets $\mathcal{A}^{\ge 1}$ and $\mathcal{A}^{<1}$ contain only matrices $A$ with $\det A \ge 1$ and $\det A <1$, respectively.
The other sets are constructed by modifying the singular values of each matrix $A$,
to test the algorithms also in extreme situations.  We assume that the singular values
are sorted in nonincreasing order.
\begin{enumerate}
  \item $\mathcal A^0$ (singular matrices): Here we replace the last (i.e., smallest)
  \(\lceil \frac{n}{3} \rceil\)
  singular values of each random matrix~$A$ by~$0$.

  \item $\mathcal A^{\overline{C}}$ (boundary of the order cone):
  For each matrix \(A\) we randomly select \(\lfloor \frac{n}{3} \rfloor\) indices between 1 and \(n-1\).
  Then, for each maximal consecutive subset of these indices,
  we take the geometric mean of the corresponding singular values
  and the next smaller singular value not in the subset. The geometric mean
  then replaces all values it was computed from in the list of singular values.
  This moves $A$ onto the boundary of the order cone without changing its determinant.
\end{enumerate}

Note that for \(n=2\) the matrices in \(\mathcal A^0\) contain exactly one zero singular value.
Also for \(n=2\), the matrices in \(\mathcal A^{\overline C}\) are intentionally \emph{not}
moved to the boundary of \(C_2\), since otherwise \((1,1)^T\) would be a stationary point for all of them.
The distribution of the determinants of the matrices in $\mathcal A^{\overline{C}}$ is shown in \cref{fig:histogram_determinants}.

\begin{figure}
  \begin{center}
      \includegraphics[width=0.8\textwidth]{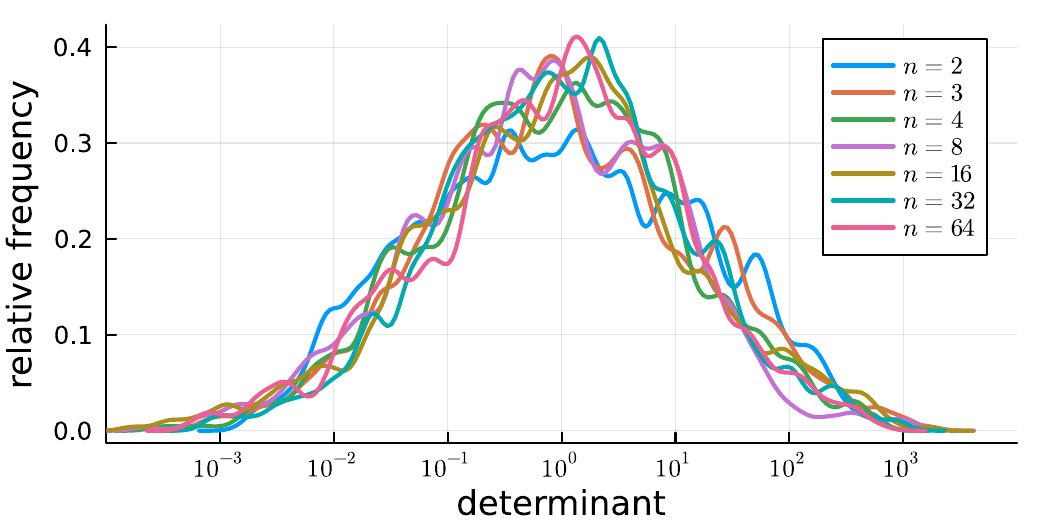}
  \end{center}
 \caption{Smoothed distribution of the determinants of the test matrices}
 \label{fig:histogram_determinants}
\end{figure}

\subsection{Test setup}

We now test all four algorithms on each of the four matrix test sets.
Plain bisection is used for all situations where a zero of a function has to be found;
this is slower but more stable than alternatives like regula falsi.
For both Newton methods a constant step length
equal to~$1$ is used, and the Hesse matrix is never modified.
This leads to convergence in almost all cases. Singular Hesse matrices appear in about 3\%
 of all test cases,
and we simply remove those cases from our test set.

We let all iterations run until a given precision is reached, but by necessity
the exact termination criteria differ from algorithm to algorithm.
The root-finding bisection method stops when the current interval
becomes smaller than \(10^{-8}\), or when \(\mathcal{P}(\lambda^{k+1})\),
the point on the solution path at the next search interval midpoint,
has a determinant that differs from \(1\) by a value below  \(10^{-15}\).
The composite step method stops once the relative difference between the current and the previous iterate \(\frac{\norm{p^k - p^{k-1}}_\infty}{\norm{p^k}_\infty}\) is below \(10^{-8}\).
The Newton method without constraint and the one with
explicit constraint monitor the relative correction sizes
\( \frac{\norm{c^k}_\infty}{\norm{\xi^k}_\infty}\)
and \( \frac{\norm{c^k}_\infty}{\norm{\bar{\zeta}^k}_\infty}\), respectively,
and stop once these quantities drop below $10^{-8}$.
To avoid very long-running iterations, we let all methods report a failure if 200 iterations are exceeded. This happens only a few times for the Newton methods.

All methods except for the root-finding bisection require an initial iterate on $\sl(n)$.
Constructing good initial iterates is a surprisingly difficult issue.
The following approach seems to be reasonable.
It combines the radial scaling introduced
in Remark~\ref{rem:scaling} with an additional correction step
within $\sl(n)$. This additional step is constructed such that $p^0$
and $a$ coincide in the largest component, which is the first one
as we assume $a$ to be in the order cone.
If $a$ is below the tangent space at $\mathbbm{1}$, it is first projected
onto that space. By this, the projected point has the largest component
equal to or larger than~1, and the subsequent scaling along $\sl(n)$
will not leave the order cone.
Figure~\ref{fig:initial-value} illustrates the procedure for \(n=2\).

\begin{algorithm}[H]
  \label{alg:initial_value}
  \caption{Constructing an initial iterate}

  \SetKwInOut{Input}{input}
  \DontPrintSemicolon

  \Input {$a \in \R^n_{\ge 0}\cap C_n$}
  \BlankLine

  \If(\tcp*[f]{If $a$ is below $T_\mathbbm{1} \textrm{sl}(n)$})
  {$\langle \mathbbm{1}, a - \mathbbm{1} \rangle < 0$}
  {
    \CommentSty{Project onto \( T_\mathbbm{1} \textrm{sl}(n)\):} \;
    \(a \leftarrow \mathbbm{1} + a - \frac{1}{n}\langle \mathbbm{1},a \rangle\)
  }

   \CommentSty{Scale radially onto \(\textrm{sl}(n)\):} \;
   \( p^0 \leftarrow \product(a + 10^{-15}\mathbbm{1})^{-\frac{1}{n}} (a + 10^{-15}\mathbbm{1}) \)
   \tcp*{$10^{-15}\mathbbm{1}$ avoids division by $0$}

  \If{\(p^0_1 > 1\label{line:exclude-p-1}\)}
  {
     \CommentSty{Geodesic scaling in $\textrm{sl}(n)$:} \;
    \(p^0 \leftarrow \exp (\gamma \log p^0)\) with \(\gamma \colonequals \frac{\log a_1}{\log p^0_1 }\)
    \tcp*{Choice of $\gamma$ ensures $p_1^0 = a_1$}
  }
\end{algorithm}

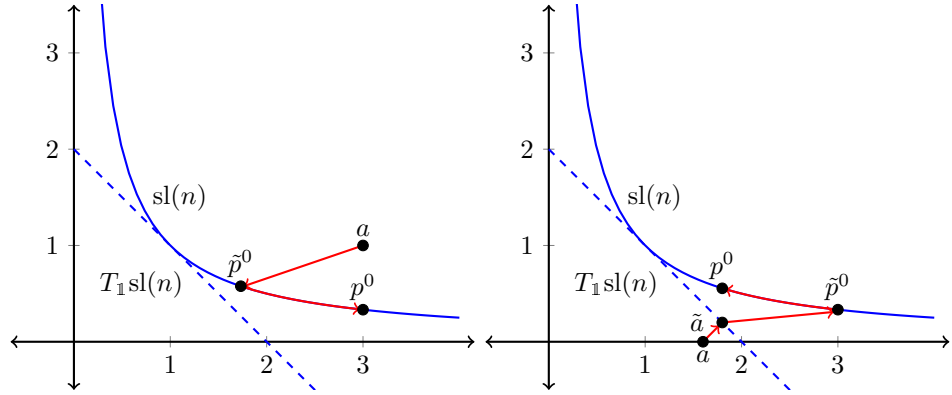
\begin{figure}
  \centering
  \begin{tikzpicture}

    \begin{axis}[
      axis x line=middle,    
      axis y line=middle,    
      axis line style={<->,color=black, thick}, 
      xmin=0.5,xmax=3,
      ymin=-0.5,ymax=3.5,
      xtick={0,1,...,3},
      axis equal,
      scale=0.9
      ]

      \addplot [domain=0:4,samples=50,thick,color=blue]{1/x};
      \node at (1.1,1.5) {$\sl(n)$};

      \addplot [domain=0:4,samples=50,thick,color=blue, dashed]{2-x};
      \node at (0.7,0.6) {$T_\mathbbm{1} \sl(n)$};

      \draw[->,thick,red] (3,1) --  (1.762,0.577);
      \draw[->,thick,red] plot[domain=1.732:2.97, samples=10] ({\x}, {1/\x});
      \draw[fill] (3,1) circle (2pt) node[above]{\(a\)};
      \draw[fill] (1.732,0.577) circle (2pt) node[above]{\(\tilde p^0\)};
      \draw[fill] (3,0.333) circle (2pt) node[above]{\(p^0\)};

    \end{axis}
  \end{tikzpicture}
    \begin{tikzpicture}

    \begin{axis}[
      axis x line=middle,    
      axis y line=middle,    
      axis line style={<->,color=black, thick}, 
      xmin=0.5,xmax=3,
      ymin=-0.5,ymax=3.5,
      xtick={0,1,...,3},
      axis equal,
      scale=0.9
      ]

      \addplot [domain=0:4,samples=50,thick,color=blue]{1/x};
      \node at (1.1,1.5) {$\sl(n)$};

      \addplot [domain=0:4,samples=50,thick,color=blue, dashed]{2-x};
      \node at (0.7,0.6) {$T_\mathbbm{1} \sl(n)$};

      \draw[->,thick,red]  (1.6, 0.0) --  (1.77, 0.17);
      \draw[->,thick,red]  (1.8, 0.2) --  (2.96, 0.31);
      \draw[->,thick,red] plot[domain=3:1.83, samples=10] ({\x}, {1/\x});
      \draw[fill] (1.6,0.0) circle (2pt) node[below]{\(a\)};
      \draw[fill] (1.8,0.2) circle (2pt) node[left]{\(\tilde a\ \)};
      \draw[fill] (3,0.3333) circle (2pt) node[above]{\(\tilde p^0\)};
      \draw[fill] (1.8,0.555) circle (2pt) node[above]{\(p^0\)};

    \end{axis}
  \end{tikzpicture}
  \caption{Constructing initial iterates $p^0$ for \(a=(2,1)^T\) and \(a=(1.6, 0)^T\).
  The symbols with a tilde represent the intermediate steps
  of Algorithm~\ref{alg:initial_value}.}
   \label{fig:initial-value}
\end{figure}

We construct the initial iterate for the two Newton methods
by transforming \(p^0\) to hyperbolic coordinates and logarithmic coordinates,
respectively.

\subsection{Results}

\begin{figure}
 \begin{center}
 \subfigure[$\mathcal{A}^{<1}$: Convex projection]{
  \includegraphics[width=0.4\textwidth]{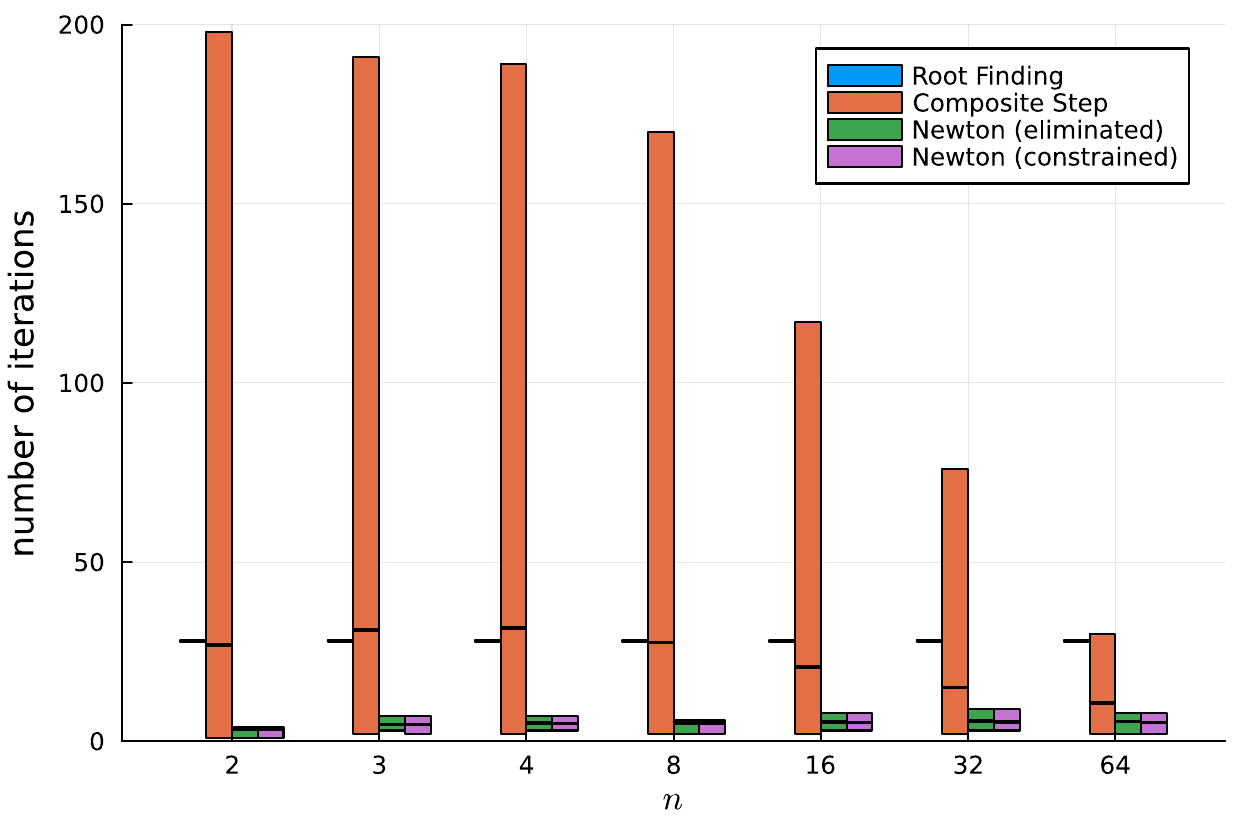}
 }
 \hspace{0.05\textwidth}
 \subfigure[$\mathcal{A}^{\ge 1}$: Non-convex projection]{
  \includegraphics[width=0.4\textwidth]{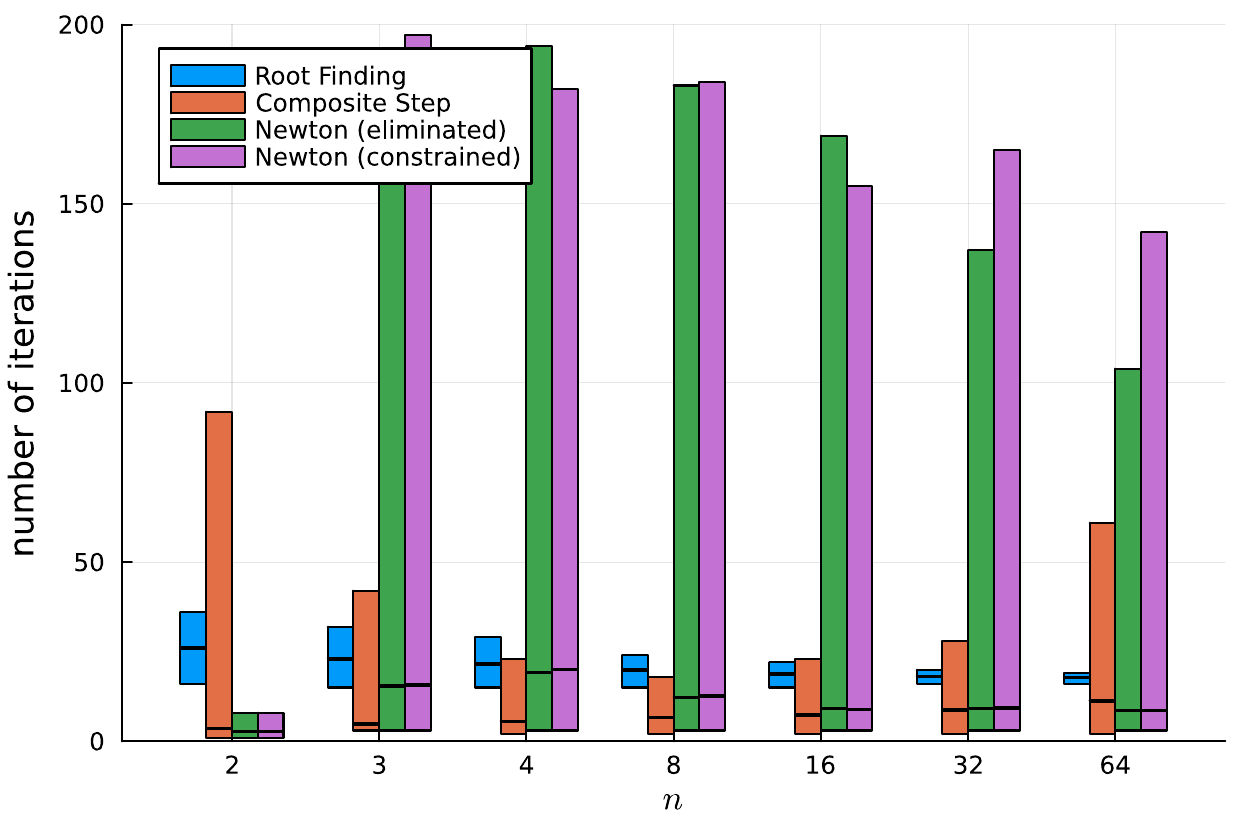}
 }

 \subfigure[$\mathcal{A}^0$: Singular matrices]{
  \includegraphics[width=0.4\textwidth]{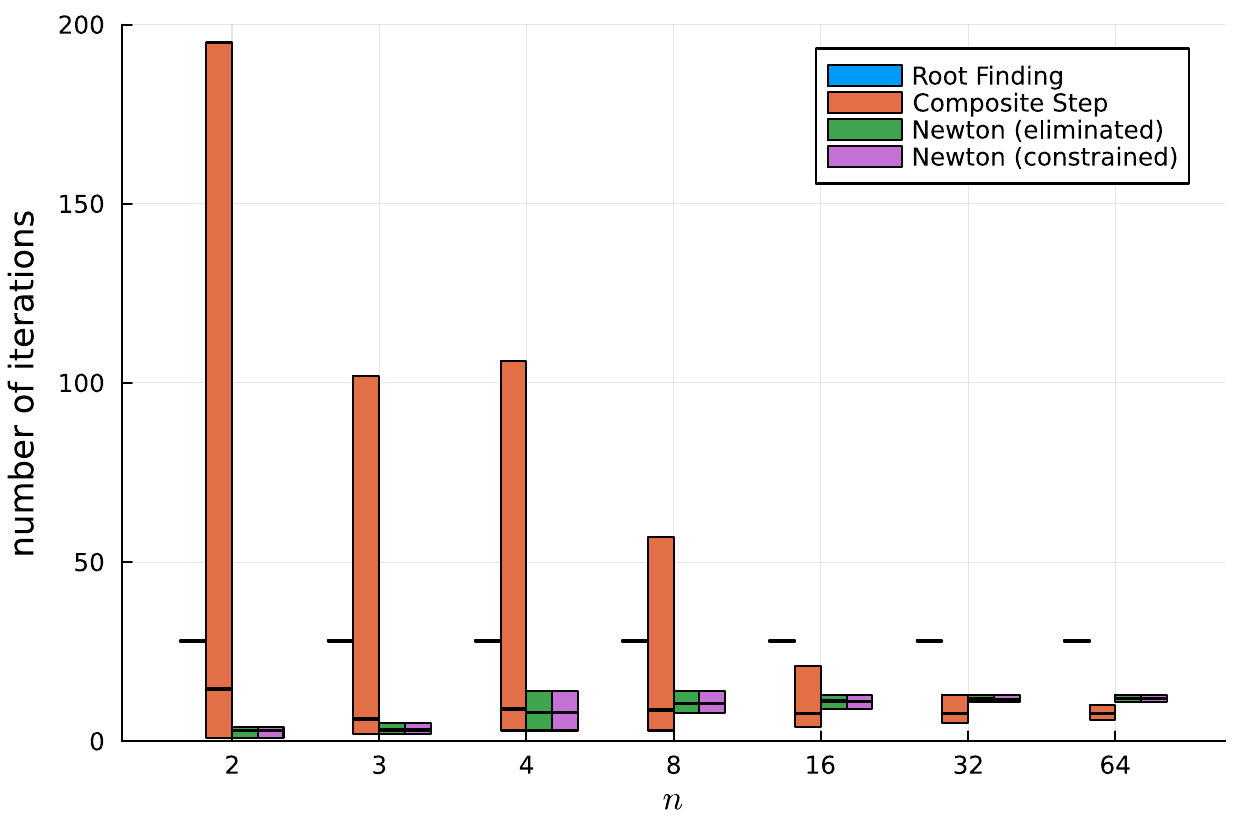}
 }
 \hspace{0.05\textwidth}
 \subfigure[$\mathcal{A}^{\overline{C}}$: Duplicate singular values]{
  \includegraphics[width=0.4\textwidth]{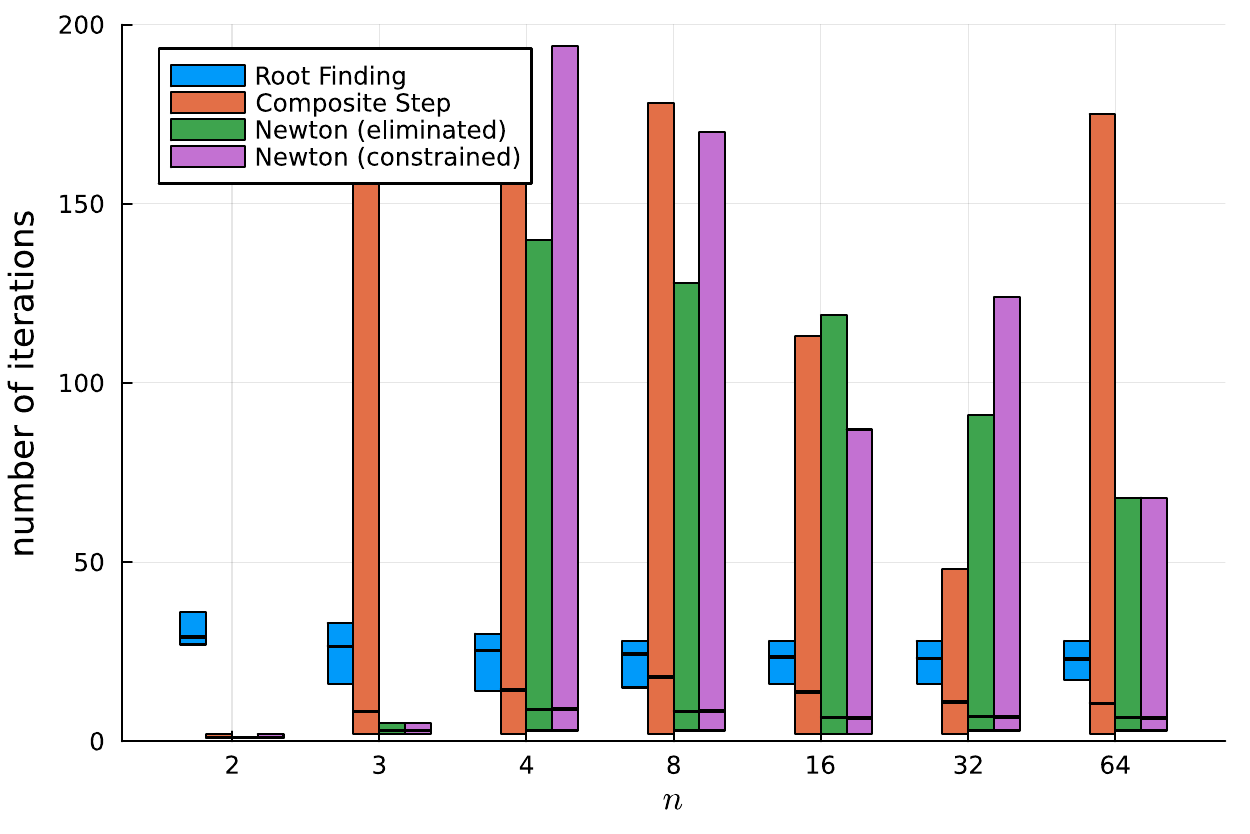}
 }
 \end{center}
 \caption{Iterations needed by the four algorithms on the four
 test data sets. Colored bars show the ranges between minimum and maximum
 number of iterations for each algorithm and matrix size.
 The small black horizontal lines show the average number of iterations.}
 \label{fig:iterations}
\end{figure}

In the tests we measured iteration counts and wall-times.
Figure~\ref{fig:iterations} shows the iterations needed for the algorithms to converge
to a solution. One can see that the behavior of the algorithms does depend on the
nature of the test matrices, but this dependency is different
for the four algorithms.

\begin{figure}
  \centering
  \includegraphics[width=0.8\textwidth]{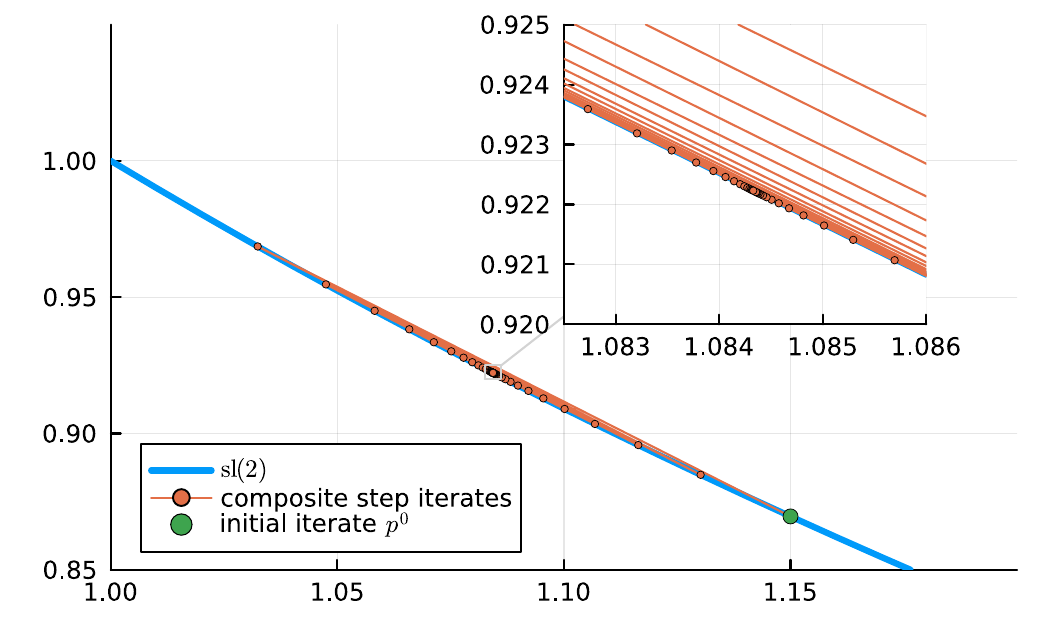}
  \caption{Visualization of a case of slow convergence of the composite step method
  for a point $a$ with $\product(a) <1$ (here: \(a=(0.3, 0)^T\)).
  The initial iterate constructed by Algorithm~\ref{alg:initial_value}
  is \(p^0=(1.15,1/1.15)^T\), and the first 50 iterates are displayed.}
  \label{fig:cs_slow_convergence}
\end{figure}

First of all,
the number of iterations of the root-finding algorithm is essentially constant
for all test sets and matrix sizes,
because the termination criterion implies an upper bound on the number of iterations
that does not depend on $A$.
The composite-step method always converges,
but for the convex case---the seemingly easier one,---it can require large
numbers of iterations. The large vertical bars
are caused by only a few outliers: The average number of iterations
(marked by the black horizontal line) is low.
Figure~\ref{fig:cs_slow_convergence} shows the behavior of the method
for such an outlier. The iterates alternate around the limit point,
but approach it only very slowly.
In the nonconvex case, the iteration numbers of the composite-step method
are much lower, and show no obvious dependence on the matrix size.

The two Netwon methods, on the other hand, perform much better
for matrices $A$ with $\det A < 1$ (including the case $\det A = 0$).
Iteration numbers are in the single-digit range or slightly above,
and do not seem to depend on the matrix size. For matrices with
a determinant above~1 they are still fast on average,
but here, also outliers can be observed where the Newton methods
require large iteration numbers.  In less than 1\,\% of all cases
the Newton methods reached the limit value of 200 iterations,
presumably because of indefinite or almost singular Hesse matrices.

The iteration numbers for the two Newton methods are similar. This is not
surprising: They produce identical iterates \cite[Chapter~10.2.3]{boyd_vandenberghe:2004},
and the different iteration numbers merely result from the fact that one method
uses a termination criterion in logarithmic coordinates, and other one
in hyperbolic coordinates.

\bigskip

\begin{figure}[h]
 \begin{center}
 \subfigure[$\mathcal{A}^{< 1}$: Convex projection]{
  \includegraphics[width=0.4\textwidth]{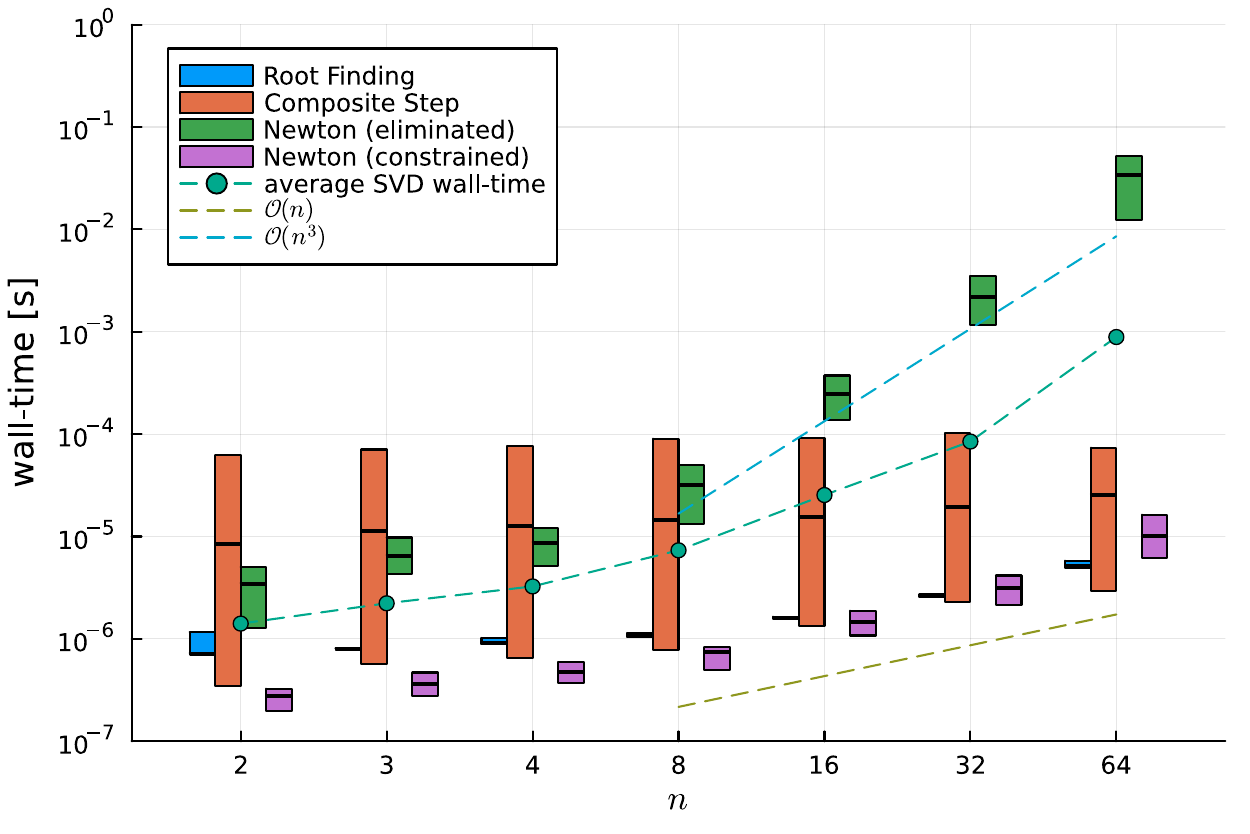}
 }
 \hspace{0.05\textwidth}
 \subfigure[$\mathcal{A}^{\ge 1}$: Non-convex projection]{
  \includegraphics[width=0.4\textwidth]{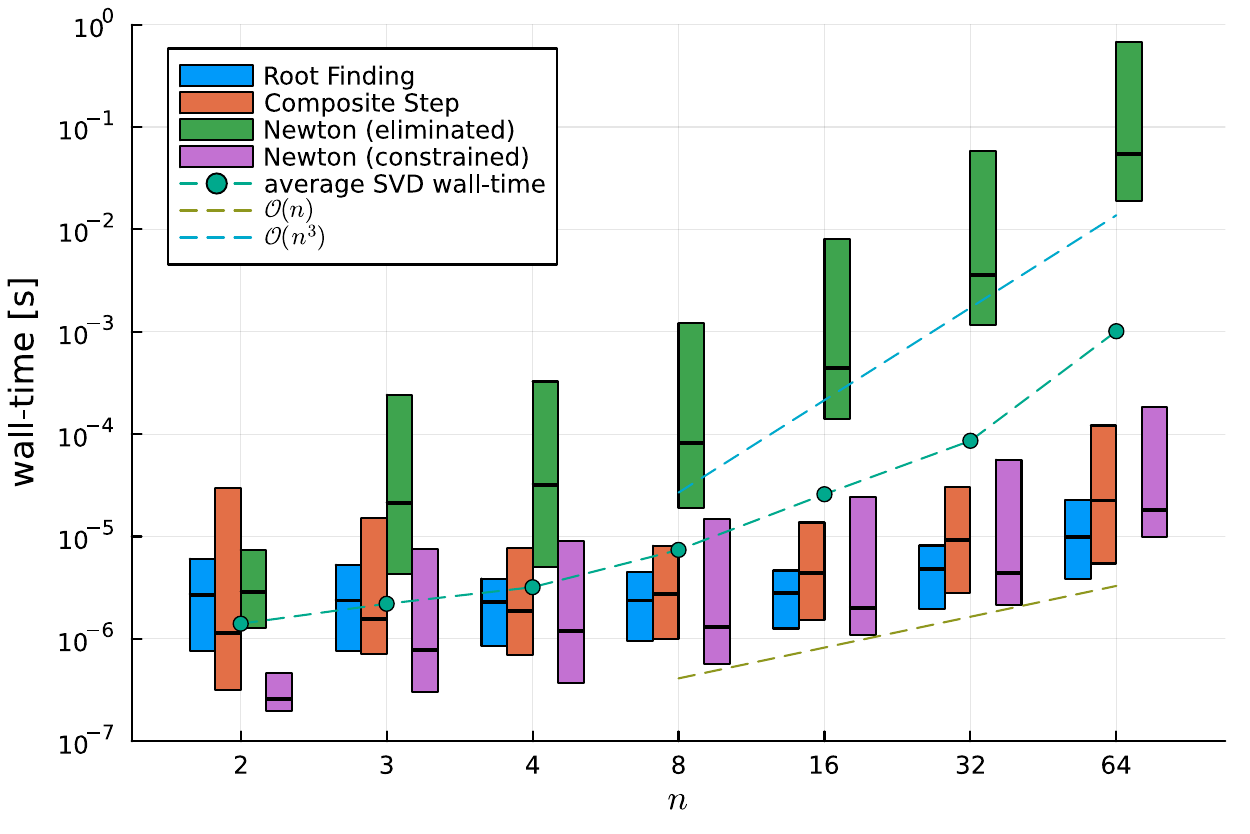}
 }

 \subfigure[$\mathcal{A}^0$: Singular matrices]{
  \includegraphics[width=0.4\textwidth]{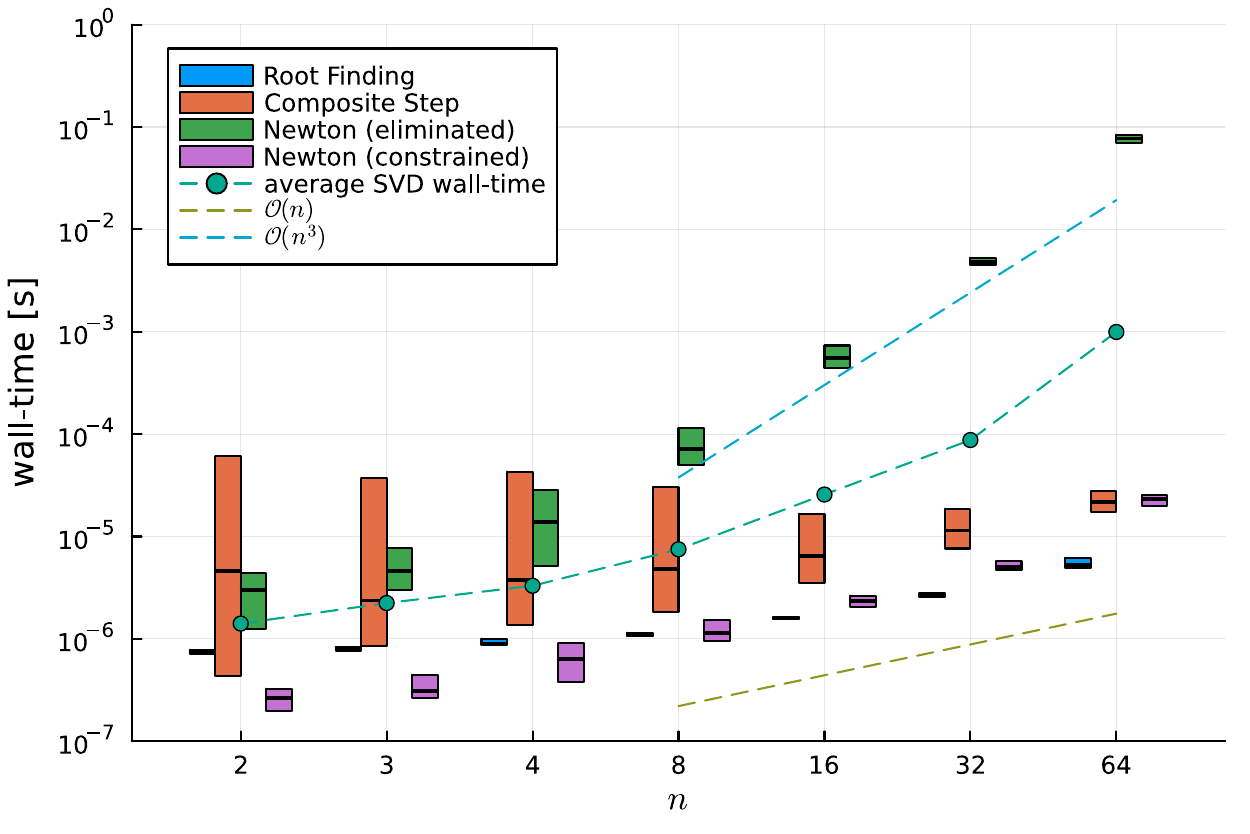}
 }
 \hspace{0.05\textwidth}
 \subfigure[$\mathcal{A}^{\overline{C}}$: Duplicate singular values]{
  \includegraphics[width=0.4\textwidth]{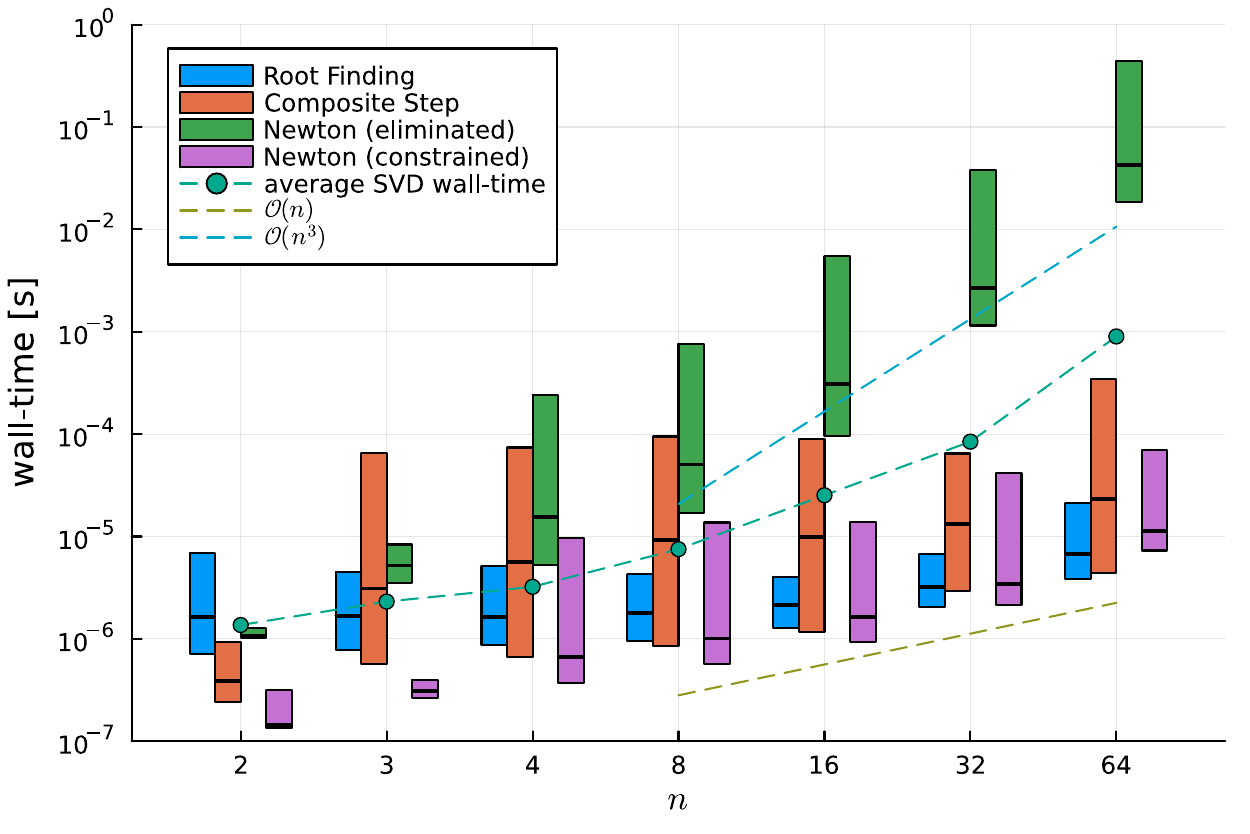}
 }
 \end{center}
 \caption{Wall-times needed by the four algorithms on the four
 test data sets. Colored bars show the ranges between minimum and maximum
 wall-times for each algorithm and matrix size.
 The small black horizontal lines show the average time.}
 \label{fig:wall_time}
\end{figure}

Figure~\ref{fig:wall_time} shows the wall times needed for the projections.
They were measured using \textsc{Julia}'s standard benchmarking package \texttt{BenchmarkTools.jl}.%
\footnote{\url{https://github.com/JuliaCI/BenchmarkTools.jl}}
That package runs each algorithm several times and determines the best-case run time.
By this, possible CPU speed variations are eliminated, and the effects of
\textsc{Julia}'s internal just-in-time precompilation are excluded.

As expected, the root-finding algorithm is fast, and the times
show very little variance. They appear to depend linearly on the matrix size,
which matches expectation.
The composite-step method turns out to be more competitive than
what could be expected from the iteration numbers, because each
iteration essentially only consists of a bisection and is therefore
still reasonably cheap.
The difference in behavior for the different test sets is now smaller.
The Newton methods, on the other hand, have to solve a linear system
of equations at each step, and the cost of this shows.
One can see that even though both methods need roughly the same number
of iterations, the unconstrained one is much slower.
This is because it has to factorize dense matrices, unlike
the constrained Newton method which can use the linear-time matrix inversion
formula of Lemma~\ref{lem:arrowhead_inversion}.
The cubic run-time behavior expected for the dense-matrix
factorization algorithm is clearly observed.

All four algorithms tested here involve a singular value decomposition to get the diagonal
representation~\eqref{eq:A-and-P-diagonalized}, and the triple matrix product $P = U \Sigma_P V^T$
to recover the result matrix from the minimizer in $\R^n$.
To illustrate the effect of this,
Figure~\ref{fig:wall_time} shows the time needed for the SVD and the matrix product separately.
The standard SVD implementation from \textsc{Julia}'s linear algebra package was used.
One can see that the SVD is cheaper than the Newton algorithm without constraints,
but (unless matrix sizes are very small) more expensive than all other algorithms.

\section{The derivative of the projection}
\label{sec:derivative}

In this final chapter we describe how to compute the first derivative of the projection
of a matrix $A \in \R^{n \times n}$ onto $\SL(n)$.  There is nothing particularly deep about this result,
but it is needed to implement $\SL(n)$-valued finite elements~\cite{grohs_hardering_sander_sprecher:2019}
and we include it for the readers' convenience.

\subsection{Computing the derivative}

While most of this manuscript has taken place in $\R^n_{\ge 0}$, the space of singular values,
now we go back to looking at actual matrices again.
From differential geometry we know that as $\SL(n)$ is a smooth embedded submanifold of $\R^{n \times n}$,
there exists a tubular neighborhood such that the closest-point projection
is differentiable~\cite[Prop.\,6.25 and Problem~6.5]{lee:2003}.
To compute the first derivative of the projection explicitly,
let $P \in \SL(n)$ be a stationary point of the squared distance functional
$\dist{A} : \SL(n) \to \R$ to the matrix
$A \in \R^{n \times n}$.  Then, $P$ and a Lagrange multiplier $\lambda \in \R$
solve the stationarity equation~\eqref{eq:lagrange}
of the Lagrange functional
\begin{equation*}
 0
 =
 \nabla \mathcal{L}(P,\lambda,A)
 =
 \begin{pmatrix}
  P - A + \lambda \nabla \det P \\
  \det P -1
 \end{pmatrix}
 \equalscolon
 \begin{pmatrix}
 F_1(P,\lambda,A) \\
 F_2(P,\lambda,A)
 \end{pmatrix}.
\end{equation*}

We interpret $P$ and $\lambda$ as functions of $A$.
To obtain the derivative of $P$ with respect to a small variation
$\delta A \in T_A \R^{n \times n} \cong \R^{n \times n}$ of $A$,
write $\partial_P$, $\partial_\lambda$, and $\partial_A$ for the
partial derivatives with respect to the first, second, and third
argument, respectively.  We compute the total derivatives of
$F_1$ and $F_2$ with respect to $\delta A$
and obtain the linear system
\begin{equation}
\label{eq:general_system_for_derivative}
 \begin{pmatrix}
  \partial_P F_1 & \partial_\lambda F_1 \\
  \partial_P F_2 & \partial_\lambda F_2
 \end{pmatrix}
 \begin{pmatrix}
  \delta P \\
  \delta \lambda
 \end{pmatrix}
 =
 -
 \begin{pmatrix}
  \partial_A F_1 \\
  \partial_A F_2
 \end{pmatrix},
\end{equation}
where $\delta P$ and $\delta \lambda$ denote the derivatives of $P$ and $\lambda$,
respectively, in the direction of $\delta A$.
The desired quantities $\delta P \in \R^{n \times n}$ and $\delta \lambda \in \R$
can therefore be obtained by solving a linear system,
provided the linear operator in~\eqref{eq:general_system_for_derivative} is invertible.

To fully understand the notation, note that the linear system is not an actual
matrix equation but rather an operator equation
with the linear operator given in $2 \times 2$ block form. For fixed $\delta A$,
the unknowns $\delta P$ and $\delta \lambda$ are in $\R^{n \times n}$ and~$\R$,
respectively.  The lower-left block $\partial_P F_2$ of~\eqref{eq:general_system_for_derivative} is a matrix,
which acts on the unknown $\delta P$ by Frobenius multiplication
$ \partial_P F_2 : \delta P
 \colonequals
 \sum_{i,j=1}^n \frac{\partial F_2}{\partial P_{ij}} (\delta P)_{ij}$.
The upper-right block $\partial_\lambda F_1$ is also a matrix,
and applying it to the scalar $\delta \lambda$ yields a matrix.
The quantity $\partial_\lambda F_2$ is a number, but
$\partial_P F_1$ is a fourth-order tensor, which we interpret as a
map $\R^{n \times n} \to \R^{n \times n}$.

Plugging in the specific forms of $F_1$ and $F_2$ of the projection problem we get
\begin{equation}
\label{eq:sensitivity_generic}
 \begin{pmatrix}
  (\cdot) + \lambda D(\nabla \det P)(\cdot) & (\det P) P^{-T} \\
  (\det P) P^{-T} : (\cdot) & 0
 \end{pmatrix}
 \begin{pmatrix}
  \delta P \\
  \delta \lambda
 \end{pmatrix}
 =
 \begin{pmatrix}
  \delta A \\
  0
 \end{pmatrix},
\end{equation}
where $D(\nabla \det P)(\cdot) : \R^{n \times n} \to \R^{n \times n}$
is the directional derivative of the first derivative of the determinant function.
The following lemma computes this directional derivative explicitly.

\begin{lemma}
\label{lem:second_derivative_of_det}
 The directional derivative of $\nabla \det P = (\det P) P^{-T}$ is the linear map
 \begin{align*}
  D(\nabla \det P)(\cdot) & \; : \; \R^{n \times n} \to \R^{n \times n},
  \\
  X & \mapsto \det P \operatorname{tr}(P^{-1} X) P^{-T} - \det P (P^{-T} X^T P^{-T}).
 \end{align*}
\end{lemma}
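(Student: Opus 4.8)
The plan is to differentiate the product $(\det P)\, P^{-T}$ by the Leibniz rule, viewing it as the product of the scalar-valued map $P \mapsto \det P$ and the matrix-valued map $P \mapsto P^{-T}$. Both of these are smooth on the open set of invertible matrices, which contains $\SL(n)$ since $\det P = 1 \neq 0$, so the product rule applies and the two factors may be differentiated independently.

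For the first factor I would recall Jacobi's formula, already used before~\eqref{eq:relation_A_to_P}: the directional derivative of the determinant in direction $X$ is $D(\det)(P)(X) = \det P \cdot \operatorname{tr}(P^{-1} X)$. For the second factor I would differentiate the identity $P^{-1} P = \identity$ in the direction $X$, which gives $D(P^{-1})(X)\, P + P^{-1} X = 0$, hence $D(P^{-1})(X) = -P^{-1} X P^{-1}$. Since transposition is a linear operation it commutes with differentiation, so $D(P \mapsto P^{-T})(X) = \big(-P^{-1} X P^{-1}\big)^T = -P^{-T} X^T P^{-T}$.

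Combining the two pieces via the product rule then yields
\begin{align*}
 D\big((\det P)\, P^{-T}\big)(X)
 &= \big(D(\det)(P)(X)\big)\, P^{-T} + (\det P)\, D(P \mapsto P^{-T})(X) \\
 &= \det P \operatorname{tr}(P^{-1} X)\, P^{-T} - \det P\, \big(P^{-T} X^T P^{-T}\big),
\end{align*}
which is exactly the claimed linear map. This is elementary matrix calculus, and I do not expect any real obstacle: the only points that need a little care are that $P$ must be invertible for every expression to be well defined (which is guaranteed since $P \in \SL(n)$), and that the transpose is tracked correctly when differentiating $P^{-T}$ rather than $P^{-1}$. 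One could additionally invoke $\det P = 1$ to drop the determinant prefactors, but since the lemma is stated for a general invertible $P$ I would keep them.
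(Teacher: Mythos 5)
Your proposal is correct and follows essentially the same route as the paper: product rule on $(\det P)\,P^{-T}$, Jacobi's formula for the determinant factor, and differentiating an identity to get the derivative of the transposed inverse. The only cosmetic difference is that you differentiate $P^{-1}P=\textup{Id}$ and then transpose, while the paper differentiates $(P+tX)^T(P+tX)^{-T}=\textup{Id}$ directly; the two computations are equivalent.
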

\begin{proof}
 Using the product rule we get for any $X \in \R^{n \times n}$
 \begin{align*}
  D(\nabla \det P)(X)
  & =
  D\big((\det P)P^{-T}\big)(X)
  =
  \big( D (\det P)(X) \big) P^{-T} + \det P D(P^{-T})(X).
 \end{align*}
 By Jacobi's formula, the first summand is $D(\det P)(X) = \det P \operatorname{tr}(P^{-1}X)$.
 For the second summand we use that
 \begin{equation*}
  0
  =
  \frac{d}{dt} \Big[ (P+tX)^T (P+tX)^{-T}\Big]\bigg|_{t=0}
  =
  \frac{d}{dt} (P^T+tX^T)\Big|_{t=0} P^{-T} + P^T \frac{d}{dt} (P+tX)^{-T}\Big|_{t=0},
 \end{equation*}
 which implies that
 \begin{equation*}
  D(P^{-T})(X) = - P^{-T} X^T P^{-T}.
 \end{equation*}
 Together we obtain the assertion.
\end{proof}

Plugging the expression for the derivative of $\nabla \det P$ into~\eqref{eq:sensitivity_generic}
and using $\det P = 1$ we obtain
\begin{equation}
\label{eq:sensitivity}
 \begin{pmatrix}
  (\cdot) + \lambda \big[ \operatorname{tr} (P^{-1} (\cdot)) P^{-T} - P^{-T}(\cdot)^T P^{-T} \big] & P^{-T} \\
  P^{-T} : (\cdot) & 0
 \end{pmatrix}
 \begin{pmatrix}
  \delta P \\
  \delta \lambda
 \end{pmatrix}
 =
 \begin{pmatrix}
  \delta A \\
  0
 \end{pmatrix}.
\end{equation}
Solutions $(\delta P, \delta \lambda)$ of this contain the derivative in the direction $\delta A$ of the projection of $A$ onto $\SL(n)$ as the first factor.

Before investigating the solvability of \eqref{eq:sensitivity} we first note that it can be
put into an equivalent form where $P$ is replaced by the diagonal matrix of its
singular values. Since the singular value decomposition is computed by all projection
algorithms of Chapter~\ref{sec:algorithms} anyway, the following diagonalized system
can be used to compute the derivative $\partial P$
more effectively.

\begin{lemma}
 Let $P = U \Sigma V^T$ be a singular value decomposition of $P$.
 Then \eqref{eq:sensitivity} has a solution $(\delta P, \delta \lambda) \in \R^{n \times n} \times \R$ if and only if
 \begin{equation}
 \label{eq:sensitivity_diagonal}
  \begin{pmatrix}
   (\cdot) + \lambda \big[ \operatorname{tr} (\Sigma^{-1} (\cdot)) \Sigma^{-1} - \Sigma^{-1}(\cdot)^T \Sigma^{-1} \big] & \Sigma^{-1} \\
   \Sigma^{-1} : (\cdot) & 0
  \end{pmatrix}
  \begin{pmatrix}
   \delta Y \\ \delta \eta
  \end{pmatrix}
  =
  \begin{pmatrix}
   U^T \delta A V \\
   0
  \end{pmatrix}
 \end{equation}
 has a solution $(\delta Y, \delta \eta) \in \R^{n \times n} \times \R$.
 The solutions are related by
 \begin{equation*}
  \delta Y = U^T (\delta P) V,
  \qquad
  \delta \eta = \delta \lambda.
 \end{equation*}

\end{lemma}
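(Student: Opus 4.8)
The plan is to produce the claimed equivalence by a direct change of variables: conjugating the matrix unknown with the orthogonal factors $U$ and $V$ turns system~\eqref{eq:sensitivity} into~\eqref{eq:sensitivity_diagonal} verbatim, and this conjugation is a bijection, so the two systems are solvable simultaneously and their solutions correspond as stated.

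First I would write out the block operator in~\eqref{eq:sensitivity} explicitly. A pair $(\delta P, \delta\lambda) \in \R^{n\times n}\times\R$ solves~\eqref{eq:sensitivity} if and only if
\begin{align*}
 \delta P + \lambda\big[\operatorname{tr}(P^{-1}\delta P)\, P^{-T} - P^{-T}(\delta P)^T P^{-T}\big] + \delta\lambda\, P^{-T} &= \delta A, \\
 \operatorname{tr}(P^{-1}\delta P) &= 0,
\end{align*}
where in the second line I used that the Frobenius pairing satisfies $P^{-T}:\delta P = \operatorname{tr}(P^{-1}\delta P)$.

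Then I would substitute the singular value decomposition $P = U\Sigma V^T$, so that $P^{-1} = V\Sigma^{-1}U^T$ and $P^{-T} = U\Sigma^{-1}V^T$, set $\delta Y \colonequals U^T(\delta P)V$, and apply the linear map $X\mapsto U^T X V$ to the first equation above. Using $U^T U = V^T V = \identity$ and the cyclic invariance of the trace, each term transforms without residue: $U^T(\delta P)V = \delta Y$; $\operatorname{tr}(P^{-1}\delta P) = \operatorname{tr}(\Sigma^{-1}\delta Y)$; $U^T\big(\operatorname{tr}(P^{-1}\delta P)\, P^{-T}\big)V = \operatorname{tr}(\Sigma^{-1}\delta Y)\,\Sigma^{-1}$; and, noting that $V^T(\delta P)^T U = (U^T(\delta P)V)^T = (\delta Y)^T$, also $U^T\big(P^{-T}(\delta P)^T P^{-T}\big)V = \Sigma^{-1}(\delta Y)^T\Sigma^{-1}$; finally $U^T(\delta\lambda\, P^{-T})V = \delta\lambda\,\Sigma^{-1}$, while the right-hand side becomes $U^T(\delta A)V$. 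Collecting terms and writing $\delta\eta = \delta\lambda$ gives exactly the first row of~\eqref{eq:sensitivity_diagonal}. Likewise the scalar constraint $\operatorname{tr}(P^{-1}\delta P)=0$ becomes $\operatorname{tr}(\Sigma^{-1}\delta Y)=0$, i.e.\ $\Sigma^{-1}:\delta Y = 0$, which is the second row of~\eqref{eq:sensitivity_diagonal}.

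Since $U$ and $V$ are orthogonal, $\delta P\mapsto U^T(\delta P)V$ is a linear isomorphism of $\R^{n\times n}$, and the identity is an isomorphism of $\R$; hence $(\delta P,\delta\lambda)\mapsto(U^T(\delta P)V,\delta\lambda)$ is a bijection carrying the solution set of~\eqref{eq:sensitivity} onto that of~\eqref{eq:sensitivity_diagonal}, with the stated relations $\delta Y = U^T(\delta P)V$ and $\delta\eta = \delta\lambda$. In particular one system has a solution precisely when the other does. The only point requiring care is the bookkeeping of transposes in the term $P^{-T}(\delta P)^T P^{-T}$, specifically the identity $V^T(\delta P)^T U = (\delta Y)^T$; everything else is a routine application of orthogonality and cyclicity of the trace, and no genuine obstacle arises.
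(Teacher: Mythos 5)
Your proposal is correct and follows essentially the same route as the paper: substitute the SVD, use cyclicity of the trace to show $P^{-T}:\delta P = \Sigma^{-1}:\delta Y$, conjugate the first equation by $U^T(\cdot)V$, and observe that this change of variables is a bijection. The only cosmetic difference is that you present the conjugation as acting term-by-term on the whole equation, whereas the paper first handles the trace term and then multiplies through; the substance is identical.
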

\begin{proof}
The singular value decomposition of $P$ implies
\begin{equation*}
 P^{-1} =  V \Sigma^{-1} U^T
 \qquad \text{and} \qquad
 P^{-T} = U \Sigma^{-1} V^T.
\end{equation*}
A direct computation then shows
\begin{align}
\label{eq:trace_diagonalization}
 P^{-T} : \delta P
 & =
 \operatorname{tr}(P^{-1} \delta P)
 =
 \operatorname{tr} (V \Sigma^{-1} U^T U \delta Y V^T) \\
 \nonumber
 & =
 \operatorname{tr} (V \Sigma^{-1}\delta Y V^T)
 =
 \operatorname{tr} (\Sigma^{-1}\delta Y)
 =
 \Sigma^{-1} : \delta Y.
\end{align}
In other words, $\delta P$ satisfies the second equation of~\eqref{eq:sensitivity}
if and only if $\delta Y$ satisfies the second equation of~\eqref{eq:sensitivity_diagonal}.

Plugging the identity~\eqref{eq:trace_diagonalization} into the first equation
of~\eqref{eq:sensitivity} yields
\begin{equation*}
 \delta P + \lambda \Big[ \operatorname{tr}(\Sigma^{-1} \delta Y) U \Sigma^{-1} V^T
 - (U \Sigma^{-1} V^T) (\delta P)^T (U \Sigma^{-1} V^T) \Big]
 + \delta \lambda \, U \Sigma^{-1} V^T
 = \delta A.
\end{equation*}
Multiplying by $U^T$ from the left and by $V$ from the right simplifies this to
\begin{equation*}
 U^T \delta P V
 + \lambda \Big[ \operatorname{tr}(\Sigma^{-1} \delta Y) \Sigma^{-1}
 - (\Sigma^{-1} V^T) (\delta P)^T (U \Sigma^{-1}) \Big]
 + \delta \lambda\, \Sigma^{-1}
 = U^T \delta A V.
\end{equation*}
With $\delta Y \colonequals U^T \delta P\, V$ and $\delta \eta = \delta \lambda$ we get
the first equation of~\eqref{eq:sensitivity_diagonal}.

Since all transformations applied to the first equation were multiplications
with invertible matrices,
the equation has a solution $(\delta Y, \delta \eta) \in \R^{n\times n} \times \R$
if and only if
the original equation has a solution $(\delta P, \delta \lambda) \in \R^{n \times n} \times \R$.
\end{proof}

\subsection{Well-posedness}

We now show that the linear system~\eqref{eq:sensitivity_diagonal}
(and hence~\eqref{eq:sensitivity}) is uniquely solvable, as long as $\lambda$ does not
assume certain ``forbidden'' values.  These values are related to symmetries
of the problem, and to second derivatives of the Lagrange functional.

To begin, note again that the second equation of \eqref{eq:sensitivity_diagonal} is
\begin{equation*}
 0 = \Sigma^{-1} : \delta Y.
\end{equation*}
Since $\Sigma^{-1}$ is a normal vector of $\SL(n)$ at the point $\Sigma$, we can
interpret this as a tangentiality condition.
Plugging it into the first equation yields
\begin{equation}
\label{eq:sensitivity_rotated}
 \delta Y - \lambda \big[ \Sigma^{-1} \delta Y^T \Sigma^{-1} \big]  + \Sigma^{-1} \delta \eta
 =
 U^T \delta A V.
\end{equation}

Define the operator
\begin{equation*}
 \skewsig(X)
 \colonequals
 X - \lambda \Sigma^{-1} X^T \Sigma^{-1},
\end{equation*}
i.e., the operator that makes up the first two terms of~\eqref{eq:sensitivity_rotated}.
Since the right-hand side $U^T \delta A\, V$ of~\eqref{eq:sensitivity_rotated} can be any matrix,
we have to show that
any matrix can be represented in the form $\skewsig(X) + \Sigma^{-1}\delta \eta$,
where $\delta \eta$ is a scalar and $X$ is a matrix that satisfies $\Sigma^{-1} : X = 0$.
For this, we first examine $\skewsig$ on the unrestricted set of all matrices $X \in \R^{n \times n}$.

\begin{lemma}
 If $\lambda \neq \pm \Sigma_i \Sigma_j$ and $\lambda \neq \Sigma_i^2$ for all $1 \le i \neq j \le n$,
 then $\skewsig : \R^{n \times n} \to \R^{n \times n}$ is a bijection.
\end{lemma}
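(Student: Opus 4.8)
The plan is to exploit the fact that $\skewsig$ only ever couples a matrix entry $X_{ij}$ with its mirror entry $X_{ji}$, so that the whole operator is block-diagonal with blocks of size at most two, after which invertibility reduces to computing a handful of $1\times1$ and $2\times2$ determinants.

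First I would write everything in coordinates. Since $\Sigma = \diag(\Sigma_1,\dots,\Sigma_n)$ we have $\Sigma^{-1} = \diag(\Sigma_1^{-1},\dots,\Sigma_n^{-1})$, hence
\begin{equation*}
 \big(\Sigma^{-1} X^T \Sigma^{-1}\big)_{ij} = \frac{1}{\Sigma_i}\,(X^T)_{ij}\,\frac{1}{\Sigma_j} = \frac{X_{ji}}{\Sigma_i \Sigma_j},
\end{equation*}
so that
\begin{equation*}
 \skewsig(X)_{ij} = X_{ij} - \frac{\lambda}{\Sigma_i \Sigma_j}\, X_{ji}
 \qquad \text{for all } 1 \le i,j \le n.
\end{equation*}
From this formula one reads off that $\skewsig$ leaves invariant the line $\operatorname{span}(E_{ii})$ for each $i$, and the plane $\operatorname{span}(E_{ij}, E_{ji})$ for each pair $i<j$. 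These subspaces together span $\R^{n\times n}$ and are mutually independent, so $\skewsig$ is a bijection if and only if its restriction to each of these blocks is.

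Second, I would check each block. On $\operatorname{span}(E_{ii})$ the operator acts by multiplication with the scalar $1 - \lambda/\Sigma_i^2$, which is nonzero precisely because $\lambda \neq \Sigma_i^2$. On the plane $\operatorname{span}(E_{ij},E_{ji})$ it acts, with respect to the basis $(E_{ij},E_{ji})$, by the matrix
\begin{equation*}
 \begin{pmatrix} 1 & -\tfrac{\lambda}{\Sigma_i\Sigma_j} \\[2pt] -\tfrac{\lambda}{\Sigma_i\Sigma_j} & 1 \end{pmatrix},
\end{equation*}
whose determinant is $1 - \lambda^2/(\Sigma_i^2\Sigma_j^2)$; this is nonzero precisely because $\lambda \neq \pm\Sigma_i\Sigma_j$. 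Hence every block is invertible under the stated hypotheses, and therefore $\skewsig$ is a bijection.

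\emph{Where the difficulty lies.} There is no serious obstacle: the only thing to get right is the bookkeeping that the blocks $\operatorname{span}(E_{ii})$ and $\operatorname{span}(E_{ij},E_{ji})$ are $\skewsig$-invariant and exhaust $\R^{n\times n}$, so that a global bijection follows from invertibility on each block. One should also note that the numbers $\Sigma_i$ are nonzero (since $P\in\SL(n)$ is invertible), so the coordinate expressions above are well defined. As a side remark, the same computation shows that the eigenvalues of $\skewsig$ are exactly $1-\lambda/\Sigma_i^2$ and $1\pm\lambda/(\Sigma_i\Sigma_j)$, which makes transparent both the statement and the role of the forbidden values of $\lambda$.
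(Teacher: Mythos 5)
Your proof is correct and takes essentially the same approach as the paper's: both reduce the problem to the invariant $1\times1$ blocks $\operatorname{span}(E_{ii})$ and $2\times2$ blocks $\operatorname{span}(E_{ij},E_{ji})$, and check invertibility there via the scalars $1-\lambda\Sigma_i^{-2}$ and the determinants $1-\lambda^2\Sigma_i^{-2}\Sigma_j^{-2}$. The only cosmetic difference is that the paper phrases the argument as ``$\skewsig(X)=0$ implies $X=0$,'' while you make the block decomposition explicit and also record the eigenvalues; both are equivalent and equally rigorous.
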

\begin{proof}
As $\skewsig$ is a linear map between two vector spaces of the same dimension,
all we have to show is that $\skewsig(X) = 0$ implies $X = 0$.

For this, we look at the entries of $\skewsig(X)$ individually.
Let first $i \neq j$, and
recall that multiplication from the left by a diagonal matrix scales the matrix rows.
Then
 \begin{equation*}
  \skewsig(X)_{ij}
  =
  X_{ij} - \lambda(\Sigma^{-1} X \Sigma^{-1} )_{ji}
  =
  X_{ij} - \lambda \Sigma^{-1}_j X_{ji} \Sigma^{-1}_i
 \end{equation*}
 and
 \begin{equation*}
  \skewsig(X)_{ji}
  =
  X_{ji} - \lambda(\Sigma^{-1} X \Sigma^{-1} )_{ij}
  =
  X_{ji} - \lambda \Sigma^{-1}_i X_{ij} \Sigma^{-1}_j.
 \end{equation*}
 Both entries being zero implies $X_{ij} = X_{ji} = 0$ only if the linear system
 \begin{equation*}
  \begin{pmatrix}
   1 & -\lambda \Sigma_i^{-1} \Sigma_j^{-1} \\
   -\lambda \Sigma_i^{-1} \Sigma_j^{-1} & 1
  \end{pmatrix}
  \begin{pmatrix} X_{ij} \\ X_{ji} \end{pmatrix}
  =
  \begin{pmatrix} 0 \\ 0 \end{pmatrix}
 \end{equation*}
 has a unique solution.  This is the case if
 \begin{equation*}
  \det
    \begin{pmatrix}
   1 & -\lambda \Sigma_i^{-1} \Sigma_j^{-1} \\
   -\lambda \Sigma_i^{-1} \Sigma_j^{-1} & 1
  \end{pmatrix}
  =
  1 - \lambda^2 \Sigma_i^{-2} \Sigma_j^{-2}
  \neq
  0,
 \end{equation*}
 which is equivalent to $\lambda \neq \pm \Sigma_i \Sigma_j$.

 For the diagonal entries we get
 \begin{equation*}
  \skewsig(X)_{ii}
  =
  X_{ii} - \lambda \Sigma_i^{-2} X_{ii}
  =
  (1 - \lambda \Sigma_i^{-2})X_{ii}.
 \end{equation*}
 Hence $\skewsig(X)_{ii} = 0$ implies $X_{ii} = 0$ if $\lambda \neq \Sigma_i^2$.
\end{proof}

\begin{remark}
 The restrictions on the Lagrange multiplier~$\lambda$ all have interpretations.
 For example,
 the value $\lambda = \Sigma_i \Sigma_j$, $i \neq j$ corresponds to the case where
 $A$ has duplicate singular values.  Indeed, if $A = U \Sigma_A V^T$ is the singular
 value decomposition of $A$, the stationarity condition
 \eqref{eq:lagrange_diagonal} implies
 \begin{equation*}
  \Sigma_A = \Sigma + \lambda \Sigma^{-1},
 \end{equation*}
 and if $\lambda = \Sigma_i \Sigma_j$ for some $i \neq j$ we get
 \begin{align*}
  \Sigma_{A,i}
  =
  \Sigma_i + \lambda \Sigma_i^{-1}
  =
  \Sigma_i + \Sigma_i \Sigma_j \Sigma_i^{-1}
  =
  \Sigma_i + \Sigma_j
  =
  \lambda \Sigma_j^{-1} + \Sigma_j
  =
  \Sigma_{A,j}.
 \end{align*}
 Similarly, $\lambda = - \Sigma_i \Sigma_j$ leads to
 \begin{align*}
  \Sigma_{A,i}
  =
  -\Sigma_{A,j},
 \end{align*}
 which can only happen if $A$ is not invertible, because the singular values
 of a matrix are never negative.

 If $\lambda = \Sigma_i^2$ for some $i=1,\dots,n$ then the bordered Hesse matrix
 of the constrained Newton system~\eqref{eq:constrained_newton_system}
 is not invertible at $(\Sigma,\lambda)$, because the $i$-th
 entry on the diagonal of its upper left block is
\begin{equation}
\label{eq:connection_to_hessian}
 \exp \xi_i (2 \exp \xi_i - a_i)
 =
 2 \Sigma_i^2 - \Sigma_i \Sigma_{A,i}
 =
 \Sigma_i^2 - (\Sigma_i \Sigma_{A,i} - \Sigma_i^2)
 =
 \Sigma_i^2 - \lambda
 =
 0.
\end{equation}
\end{remark}

As a corollary we obtain directly that~\eqref{eq:sensitivity_rotated} may not
have a solution if $\lambda = \pm \Sigma_i \Sigma_j$ for some $i \neq j$.
Indeed, in that case $\skewsig(X)_{ij} = \mp \skewsig(X)_{ji}$, and therefore
also $(\skewsig(X) + \Sigma^{-1} \partial\eta)_{ij} = \mp (\skewsig(X) + \Sigma^{-1}\partial \eta)_{ji}$,
because $\Sigma^{-1} \delta \eta$ is diagonal.
This excludes a solution if $(U^T\delta A V)_{ij} \neq \mp (U^T\delta A V)_{ji}$.

\bigskip

We have shown that $\skewsig$ is surjective as a map from $\R^{n \times n}$
onto itself.
 However, as solutions of~\eqref{eq:sensitivity_diagonal} we can only
consider those matrices $X$ that also fulfill the second equation,
i.e., the ones with $\Sigma^{-1} : X = 0$.  As $\skewsig$ is a bijection,
the image of the $n^2-1$-dimensional space of all $X \in \R^{n \times n}$ with $\Sigma^{-1} : X = 0$
is an $n^2 -1$-dimensional space.  We compute the orthogonal complement
of this image space. In the following we write $\Sigma^{-1} \perp X$ as a shorthand
for $\Sigma^{-1} : X = 0$.

\begin{lemma}
\label{lem:range_of_skew}
 Let $X \perp \Sigma^{-1}$ and $\lambda \neq \Sigma_i^2$ for all $i=1,\dots,n$.
 Then $(\Sigma -\lambda \Sigma^{-1})^{-1}$ is a well-defined matrix, and
 \begin{equation*}
  \skewsig(X) \perp (\Sigma - \lambda \Sigma^{-1})^{-1}.
 \end{equation*}
\end{lemma}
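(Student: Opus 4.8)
The plan is a direct computation that exploits the fact that every matrix entering the inner product $\skewsig(X) : (\Sigma - \lambda\Sigma^{-1})^{-1}$ that actually contributes is diagonal. First I would observe that $\Sigma - \lambda\Sigma^{-1}$ is the diagonal matrix with entries $\Sigma_i - \lambda\Sigma_i^{-1} = (\Sigma_i^2 - \lambda)/\Sigma_i$. All $\Sigma_i$ are nonzero (the stationary point $P$, hence $\Sigma$, is invertible, as noted after~\eqref{eq:lagrange}), and by the standing hypothesis $\lambda \neq \Sigma_i^2$ none of the numerators vanishes either. Hence $\Sigma - \lambda\Sigma^{-1}$ is invertible, and
\begin{equation*}
 (\Sigma - \lambda\Sigma^{-1})^{-1}
 =
 \diag_{i=1,\dots,n}\!\Big( \frac{\Sigma_i}{\Sigma_i^2 - \lambda} \Big),
\end{equation*}
which establishes the first claim of the lemma.

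Next, since $(\Sigma - \lambda\Sigma^{-1})^{-1}$ is diagonal, the Frobenius inner product $\skewsig(X) : (\Sigma - \lambda\Sigma^{-1})^{-1}$ depends only on the diagonal of $\skewsig(X)$. I would compute these diagonal entries: using that transposition does not change diagonal entries and that $\Sigma^{-1}$ is diagonal,
\begin{equation*}
 \skewsig(X)_{ii}
 =
 X_{ii} - \lambda\,(\Sigma^{-1}X^T\Sigma^{-1})_{ii}
 =
 X_{ii} - \lambda\,\Sigma_i^{-1}(X^T)_{ii}\,\Sigma_i^{-1}
 =
 \Big(1 - \frac{\lambda}{\Sigma_i^2}\Big) X_{ii}
 =
 \frac{\Sigma_i^2 - \lambda}{\Sigma_i^2}\, X_{ii}.
\end{equation*}

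Finally I would combine the two and watch the factors cancel:
\begin{equation*}
 \skewsig(X) : (\Sigma - \lambda\Sigma^{-1})^{-1}
 =
 \sum_{i=1}^n \frac{\Sigma_i^2 - \lambda}{\Sigma_i^2}\, X_{ii}\cdot\frac{\Sigma_i}{\Sigma_i^2 - \lambda}
 =
 \sum_{i=1}^n \frac{X_{ii}}{\Sigma_i}
 =
 \Sigma^{-1} : X,
\end{equation*}
where the cancellation of $\Sigma_i^2 - \lambda$ is exactly what the hypothesis $\lambda \neq \Sigma_i^2$ permits. The assumption $X \perp \Sigma^{-1}$ means precisely that this last expression vanishes, which gives $\skewsig(X) \perp (\Sigma - \lambda\Sigma^{-1})^{-1}$.

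There is no genuine obstacle: the argument is three lines of linear algebra. The only points requiring a little care are that the transpose in $\skewsig$ is harmless on the diagonal (so the clean factor $\Sigma_i^{-2}$ emerges), and that the hypothesis $\lambda \neq \Sigma_i^2$ is needed twice — once for the inverse to exist and once for the cancellation to be legal — so it cannot be omitted.
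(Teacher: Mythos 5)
Your proof is correct and takes essentially the same route as the paper: exploit that $(\Sigma-\lambda\Sigma^{-1})^{-1}$ is diagonal, reduce the Frobenius product to a sum over diagonal entries of $\skewsig(X)$, and watch the $\Sigma_i^2-\lambda$ factors cancel to leave $\Sigma^{-1}:X=0$. Your added justification that $(\Sigma-\lambda\Sigma^{-1})^{-1}$ is actually well defined is a small improvement over the paper's proof, which asserts the lemma's well-definedness claim without comment.
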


\begin{proof}
 Note that the matrix $(\Sigma - \lambda \Sigma^{-1})^{-1}$ is diagonal.
 Therefore, for any $X \in \R^{n \times n}$,
 \begin{align*}
  \skewsig(X) : (\Sigma - \lambda \Sigma^{-1})^{-1}
  & =
  \sum_{i=1}^n (1 - \lambda \Sigma^{-2}_i)X_{ii} (\Sigma_i - \lambda\Sigma^{-1}_i)^{-1}  \\
  & =
  \sum_{i=1}^n (1 - \lambda \Sigma^{-2}_i) X_{ii} \frac{\Sigma^{-1}_i}{1 - \lambda \Sigma^{-2}_i} \\
  & =
  \sum_{i=1} X_{ii} \Sigma^{-1}_i \\
  & =
  X : \Sigma^{-1} \\
  & =
  0.\qedhere
 \end{align*}
\end{proof}

Using this orthogonality relation we show that $\Sigma^{-1}$ is not contained
in the image of $\skewsig$.  This implies that $\Sigma^{-1}$ and the range
of $\skewsig$ span $\R^{n \times n}$ even if only matrices $X$ orthogonal
to $\Sigma^{-1}$ are considered as arguments of $\skewsig$.

\begin{lemma}
 \label{lem:range_does_not_contain_normal}
 Assume that $\Sigma$ and $\lambda$ are such that $\sum_{i=1}^n \frac{1}{\Sigma_i^2 - \lambda}$
 is well-defined and non-zero.  Then $X \perp \Sigma^{-1}$ implies $\skewsig(X) \neq \Sigma^{-1}$.
\end{lemma}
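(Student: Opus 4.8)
The plan is to argue by contradiction, pulling the needed information out of the orthogonality relation established in Lemma~\ref{lem:range_of_skew}. Suppose there were an $X \in \R^{n \times n}$ with $X \perp \Sigma^{-1}$ and $\skewsig(X) = \Sigma^{-1}$. The hypothesis that $\sum_{i=1}^n \tfrac{1}{\Sigma_i^2 - \lambda}$ is well-defined is precisely the statement that $\lambda \neq \Sigma_i^2$ for every $i$, so the diagonal matrix $(\Sigma - \lambda \Sigma^{-1})^{-1}$ exists and Lemma~\ref{lem:range_of_skew} is applicable. It gives $\skewsig(X) \perp (\Sigma - \lambda \Sigma^{-1})^{-1}$, and combining this with the assumed identity $\skewsig(X) = \Sigma^{-1}$ yields $\Sigma^{-1} \perp (\Sigma - \lambda \Sigma^{-1})^{-1}$.

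It then remains only to evaluate this Frobenius product. Both factors are diagonal, with $i$-th entries $\Sigma_i^{-1}$ and $(\Sigma_i - \lambda \Sigma_i^{-1})^{-1} = \Sigma_i/(\Sigma_i^2 - \lambda)$ respectively (here we use, as throughout the section, that $\Sigma$ is invertible because $\det \Sigma = 1$), so
\begin{equation*}
 \Sigma^{-1} : (\Sigma - \lambda \Sigma^{-1})^{-1}
 =
 \sum_{i=1}^n \Sigma_i^{-1} \cdot \frac{\Sigma_i}{\Sigma_i^2 - \lambda}
 =
 \sum_{i=1}^n \frac{1}{\Sigma_i^2 - \lambda}.
\end{equation*}
By hypothesis this sum is nonzero, which contradicts the orthogonality just derived. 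Hence no such $X$ exists, which is the assertion.

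The argument is essentially a one-line computation, so there is no real obstacle beyond checking that Lemma~\ref{lem:range_of_skew} is indeed available under the stated hypothesis (it is, since ``well-defined'' is exactly $\lambda \neq \Sigma_i^2$ for all $i$). The point worth emphasizing is conceptual rather than technical: it is the \emph{same} scalar $\sum_{i=1}^n 1/(\Sigma_i^2-\lambda)$ that governed the singularity of the bordered Hesse matrix of the constrained Newton system (compare~\eqref{eq:trace_condition_hyperbolic_2} and~\eqref{eq:connection_to_hessian}) and that now decides whether the normal direction $\Sigma^{-1}$ lies in the image of $\skewsig$ restricted to the tangent space $\{X : \Sigma^{-1} : X = 0\}$. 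This is the structural reason why well-posedness of the derivative system~\eqref{eq:sensitivity_diagonal} comes down to exactly this quantity being nonzero.
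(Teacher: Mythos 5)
Your proof is correct and follows essentially the same route as the paper's: invoke Lemma~\ref{lem:range_of_skew} to get $\skewsig(X) \perp (\Sigma - \lambda\Sigma^{-1})^{-1}$, then evaluate the Frobenius inner product $\Sigma^{-1} : (\Sigma - \lambda\Sigma^{-1})^{-1} = \sum_i (\Sigma_i^2-\lambda)^{-1}$ and conclude it is nonzero. Your contradiction framing is in fact marginally tidier than the paper's, which asserts that $(\Sigma - \lambda\Sigma^{-1})^{-1}$ \emph{spans} the orthogonal complement of the image of $\skewsig$ on the tangent space $\{X : X \perp \Sigma^{-1}\}$ --- a dimension claim that implicitly uses injectivity of $\skewsig$ (requiring $\lambda \neq \pm\Sigma_i\Sigma_j$, which is not part of this lemma's hypotheses); you sidestep that entirely by using only the orthogonality statement of Lemma~\ref{lem:range_of_skew}, which needs nothing beyond $\lambda \neq \Sigma_i^2$.
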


\begin{proof}
 We show that $\Sigma^{-1}$ is not orthogonal to the orthogonal complement
 of the image of $\skewsig$.  By Lemma~\ref{lem:range_of_skew}, that orthogonal complement
 is spanned by $(\Sigma - \lambda \Sigma^{-1})^{-1}$. Testing for orthogonality
 of that with $\Sigma^{-1}$, we obtain
 \begin{align*}
  \Sigma^{-1} : (\Sigma - \lambda \Sigma^{-1})^{-1}
  =
  \sum_{i=1}^n \frac{1}{\Sigma_i^2 - \lambda},
 \end{align*}
 which is non-zero under the given conditions.
\end{proof}

\begin{remark}
The condition $\sum_{i=1}^n \frac{1}{\Sigma_i^2 - \lambda} \neq 0$
corresponds to the invertibility condition~\eqref{eq:trace_condition_hyperbolic_2}
for the bordered Hesse matrix of the constrained Newton system
of Chapter~\ref{sec:newton_with_constraints}. Indeed,
reusing the computation~\eqref{eq:connection_to_hessian} we get
\begin{align*}
 \sum_{i=1}^n \frac{1}{\exp \xi_i (2 \exp \xi_i - a_i)}
 =
 \sum_{i=1}^n \frac{1}{\Sigma_i^2 - \lambda}.
\end{align*}
\end{remark}

Summing up, we have shown:

\begin{theorem}
 The linear system~\eqref{eq:sensitivity_diagonal} is well posed, provided that
 $\lambda \neq \Sigma_i^2$, $1 \le i \le n$, $\lambda \neq \pm \Sigma_i \Sigma_j$,
 $1 \le i \neq j \le n$, and $\sum_{i=1}^n (\Sigma_i^2-\lambda)^{-1} \neq 0$.
\end{theorem}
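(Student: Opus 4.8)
The plan is to assemble the three lemmas of this subsection into a short injectivity argument. The operator appearing on the left of~\eqref{eq:sensitivity_diagonal} is linear and maps $\R^{n\times n}\times\R$ to itself, so well-posedness (unique solvability for every right-hand side) is equivalent to injectivity. Hence it suffices to show that a solution $(\delta Y,\delta\eta)$ of the homogeneous system~\eqref{eq:sensitivity_diagonal} with zero right-hand side must vanish.

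First I would use the second equation, $\Sigma^{-1}:\delta Y=0$, to simplify the first one exactly as in the passage leading to~\eqref{eq:sensitivity_rotated}: the term $\operatorname{tr}(\Sigma^{-1}\delta Y)\,\Sigma^{-1}$ drops out because $\operatorname{tr}(\Sigma^{-1}\delta Y)=\Sigma^{-1}:\delta Y=0$, so the first equation collapses to $\skewsig(\delta Y)+\delta\eta\,\Sigma^{-1}=0$ with $\delta Y\perp\Sigma^{-1}$. Next I would rule out $\delta\eta\neq0$: if it were nonzero, then by linearity $\skewsig\big(\delta Y/(-\delta\eta)\big)=\Sigma^{-1}$ with $\delta Y/(-\delta\eta)\perp\Sigma^{-1}$, which contradicts Lemma~\ref{lem:range_does_not_contain_normal}; this is the step that consumes the hypothesis $\sum_{i=1}^n(\Sigma_i^2-\lambda)^{-1}\neq0$ (and, via the proof of that lemma, Lemma~\ref{lem:range_of_skew} and $\lambda\neq\Sigma_i^2$, which also make $(\Sigma-\lambda\Sigma^{-1})^{-1}$ and the individual terms meaningful). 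Thus $\delta\eta=0$, leaving $\skewsig(\delta Y)=0$; the bijectivity of $\skewsig$ on all of $\R^{n\times n}$ --- where $\lambda\neq\pm\Sigma_i\Sigma_j$ and $\lambda\neq\Sigma_i^2$ are used --- then forces $\delta Y=0$.

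I do not expect a genuine obstacle: essentially all the work has been front-loaded into the three preceding lemmas, and the remaining task is bookkeeping --- checking that every hypothesis of the theorem is actually invoked and that none of the matrices or sums written down become ill-defined. The one point worth stating explicitly is that well-posedness of~\eqref{eq:sensitivity_diagonal} transfers to~\eqref{eq:sensitivity}, because the diagonalization lemma relating~\eqref{eq:sensitivity} and~\eqref{eq:sensitivity_diagonal} identifies their solution sets through the invertible change of variables $\delta Y=U^T(\delta P)V$, $\delta\eta=\delta\lambda$; so the derivative $\delta P$ of the projection exists and is unique under the stated conditions. If one prefers, the same conclusion can be reached by a dimension count: $\skewsig$ is injective on the hyperplane $\{X:X\perp\Sigma^{-1}\}$, its image there equals $\{(\Sigma-\lambda\Sigma^{-1})^{-1}\}^\perp$ by Lemma~\ref{lem:range_of_skew}, and Lemma~\ref{lem:range_does_not_contain_normal} says $\Sigma^{-1}$ lies outside that hyperplane, so this image together with $\operatorname{span}(\Sigma^{-1})$ exhausts $\R^{n\times n}$.
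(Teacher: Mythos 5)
Your proof is correct and rests on the same three lemmas as the paper, namely the bijectivity of $\skewsig$ on $\R^{n\times n}$ under $\lambda\neq\pm\Sigma_i\Sigma_j$ and $\lambda\neq\Sigma_i^2$, together with Lemmas~\ref{lem:range_of_skew} and~\ref{lem:range_does_not_contain_normal}. The only difference is the dual phrasing --- you establish injectivity of the homogeneous square system where the paper argues surjectivity via a dimension count --- and you yourself note this equivalence in your closing remark.
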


\printbibliography

\end{document}